\newtheorem{theorem}{Theorem}[section]
\newtheorem{proposition}[theorem]{Proposition}
\newtheorem{lemma}[theorem]{Lemma}
\theoremstyle{definition}    
\newtheorem{definition}[theorem]{Definition}
\theoremstyle{remark}
\newtheorem{remark}[theorem]{Remark}
\newtheorem{remarks}[theorem]{Remarks}
\newtheorem{example}[theorem]{Example}
\newcommand\A{\mathcal{A}}
\newcommand\M{\mathcal{M}}
\newcommand\G{\mathcal{G}}
\newcommand{\K}{\mathsf{K}}
\newcommand{\Spin}{\on{Spin}}
\renewcommand{\O}{\mathcal{O}}
\newcommand{\T}{\mathbf{T}}
\newcommand{\Co}{\mathcal{C}}
\newcommand{\ca}{\mathcal}
\newcommand{\U}{\on{U}}
\newcommand{\N}{\mathcal{N}}
\newcommand{\R}{\mathbb{R}}
\newcommand{\C}{\mathbb{C}}
\newcommand{\SU}{\on{SU}}
\newcommand{\Z}{\mathbb{Z}}
\renewcommand{\P}{\mathcal{P}}
\newcommand{\PG}{\ca{P} G}
\newcommand{\sz}{\mathsf{s}}
\newcommand{\Cl}{\C l}
\newcommand\lie[1]{\mathfrak{#1}}
\newcommand{\g}{\lie{g}}
\renewcommand{\t}{\lie{t}}
\newcommand{\on}{\operatorname}
\newcommand{\Ad}{ \on{Ad} }
\newcommand{\ad}{\on{ad}}
\newcommand{\Hom}{ \on{Hom}}
\renewcommand{\subset}{\subseteq}
\newcommand{\Sz}{\mathsf{S}}
\newcommand{\s}{{(s)}}
\renewcommand{\ker}{ \on{ker}}
\newcommand{\cox}{\mathsf{h}^\vee}
\newcommand\qu{/\kern-.7ex/} 
\newcommand{\fus}{\circledast} 
\newcommand{\Waff}{W_{\text{aff}}} 
\newcommand{\hra}{\hookrightarrow}
\renewcommand{\d}{{\mbox{d}}}
\newcommand{\ol}{\overline}
\newcommand\Phinv{\Phi^{-1}}
\newcommand\eps{\epsilon}
\newcommand\om{\omega}
\newcommand{\f}{\frac}
\newcommand{\p}{\partial}
\renewcommand{\l}{\langle}
\renewcommand{\r}{\rangle}
\newcommand\hh{{\f{1}{2}}}
\newcommand{\eeq}{\end{eqnarray*}}
\newcommand{\beq}{\begin{eqnarray*}}
\newcommand{\Hol}{\on{Hol}}
\newcommand{\wh}{\widehat}
\newcommand{\wt}{\widetilde}
\newcommand{\mf}{\mathfrak}
\newcommand{\n}{\mf{n}}
\newcommand\dirac{/\kern-1.2ex\partial} 
\renewcommand{\i}{\mathsf{i}}
\newcommand{\tpi}{{2\pi\i}}
\newcommand{\ev}{\on{ev}}
\begin{document}
\sloppy
\title{Spinor modules for Hamiltonian loop group spaces}
\author{Yiannis Loizides}
\author{Eckhard Meinrenken} 
\author{Yanli Song}

\begin{abstract}
Let $LG$ be the loop group of a compact, connected Lie group $G$. We show that the tangent bundle of any proper Hamiltonian $LG$-space $\M$ has a natural completion $\ol{T}\M$ to a \emph{strongly} symplectic $LG$-equivariant vector bundle. This bundle admits an invariant  
compatible complex structure within a natural polarization class, defining an $LG$-equivariant spinor bundle $\Sz_{\ol{T}\M}$, which one may regard as the $\Spin_c$-structure of $\M$. We  
describe two procedures for obtaining a finite-dimensional version of this spinor module. In one approach, we construct  from $\Sz_{\ol{T}\M}$ a twisted $\Spin_c$-structure for the quasi-Hamiltonian $G$-space associated to $\M$. In the second approach,  we describe an `abelianization procedure', passing to a finite-dimensional $T\subset LG$-invariant submanifold of $\M$, and we show how to construct an equivariant $\Spin_c$-structure on that submanifold. 
\end{abstract}
\maketitle
\section{Introduction}
Let $G$ be a compact, connected Lie group, with an invariant inner product $\cdot $ on $\g$.
We take the loop group $LG$ to be the Banach Lie group of $G$-valued loops of a fixed Sobolev class $\sz>\f{1}{2}$;  loops in this Sobolev range are continuous, and the group structure is given by pointwise multiplication. 
The loop group acts  by gauge transformations on the space $\A$ of 
connections over the circle,  given as $\g$-valued 1-forms on $S^1$ of Sobolev class $s-1$. 
\[ \lambda.\mu=\Ad_\lambda\mu-\partial \lambda\ \lambda^{-1},\ \ \ \mu\in \A,\ \ \lambda\in LG.\] 
A \emph{proper Hamiltonian loop group space} $(\M,\omega,\Phi)$ is a weakly symplectic Banach manifold $\M$, with an action of the loop group and a proper $LG$-equivariant \emph{moment map}
\[ \Phi\colon \M\to \A\]
satisfying the moment map condition $\iota(\xi_\M)\omega=-\d\l\Phi,\xi\r$ for $\xi\in L\g$, where the function $\l\Phi,\xi\r$ is defined by the pointwise inner product of $\Phi$ and $\xi$, followed by integration over $S^1$.

The 2-form $\omega$ being  \emph{weakly} symplectic means that the bundle map $\omega^\flat \colon T\M\to T^*\M$ is injective, in contrast to \emph{strongly} symplectic 2-forms for which it is required to be an isomorphism. Our first result is 
\begin{theorem}
The tangent bundle of any proper Hamiltonian loop group space $\M$ has a canonically defined $LG$-equivariant completion $\ol{T}\M$, such that 
the 2-form $\omega$  extends to a \emph{strongly} symplectic 2-form on $\ol{T}\M$. 
\end{theorem}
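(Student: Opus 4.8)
The plan is to show that $\omega$ fails to be strong only in the loop-group orbit directions, and to cure this by completing those directions to Sobolev class $\tfrac12$, the unique Sobolev exponent $a$ for which a first-order differential operator on $S^1$ maps $H^a(S^1,\g)$ to $H^{-a}(S^1,\g)=(H^a(S^1,\g))^*$. I would begin from the local structure of a proper Hamiltonian loop group space near a point $m\in\M$, with $\mu=\Phi(m)$: because $\A/LG\cong G$ via holonomy, the stabilizer $LG_\mu$ is a finite-dimensional centralizer, the reduced space $\Phi^{-1}(\mu)/LG_\mu$ is a finite-dimensional symplectic manifold, $\ker\d\Phi_m=T_m\Phi^{-1}(\mu)$ is finite-dimensional, and --- after a gauge transformation moving $\mu$ to normal form --- $\M$ is locally modelled on $H^\sz(S^1,\g)\oplus F$ with $F$ finite-dimensional, the $H^\sz$-summand recording the orbit directions transverse to $\ker\d\Phi_m$. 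Writing $\rho_m\colon L\g\to T_m\M$, $\xi\mapsto\xi_\M(m)$, for the infinitesimal action, the moment map condition reads $\omega(\rho_m\xi,v)=-\langle\d\Phi_m v,\xi\rangle$ (the $L^2$-pairing on $S^1$, finite since $\sz>\tfrac12$), and $LG$-equivariance of $\Phi$ gives $\d\Phi_m(\rho_m\xi)=-\p_\mu\xi$ with $\p_\mu\xi=\p\xi+[\mu,\xi]$; so on orbit directions $\omega(\rho_m\xi,\rho_m\eta)=-\langle\p_\mu\xi,\eta\rangle$, which up to the lower-order term $[\mu,\cdot\,]$ is the loop-algebra cocycle --- a perfect pairing precisely on $H^{1/2}$. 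This fixes the completion.

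I would then define it fiberwise as a pushout. Let $H_m$ be the completion of $L\g$ in the norm $\xi\mapsto\|\p_\mu\xi\|_{H^{-1/2}}+\|\xi\|_{L^2}$ (which agrees with $H^{1/2}(S^1,\g)$ up to the lower-order term), and set
\[ \ol{T}_m\M:=(T_m\M\oplus H_m)/\{(\rho_m\xi,-\xi):\xi\in L\g\},\]
with $T_m\M\hookrightarrow\ol{T}_m\M$ by $v\mapsto[(v,0)]$ and the extended action $\ol\rho_m\colon H_m\to\ol{T}_m\M$, $\xi\mapsto[(0,\xi)]$; by the local model this quotient is Hilbertable, modelled on $H^{1/2}(S^1,\g)\oplus F$, manifestly canonical (no choices), and $LG$-equivariant because $\rho$ and $\Phi$ are. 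The form is extended so that the moment map condition persists: $\ol\omega(v,\ol\rho_m\xi):=\langle\d\Phi_m v,\xi\rangle$ and $\ol\omega(\ol\rho_m\xi,\ol\rho_m\eta):=-\langle\p_\mu\xi,\eta\rangle$ for $\xi,\eta\in H_m$. One checks $\ol\omega$ descends to the quotient (using $\d\Phi_m\rho_m\zeta=-\p_\mu\zeta$ and the $L^2$-skewness of $\p_\mu$, the latter from integration by parts and $\Ad$-invariance of the inner product), is antisymmetric, restricts to $\omega$ on $T\M$, and is bounded; boundedness and the finiteness of the products $[\mu,\cdot\,]$ and of $\langle\d\Phi_m\cdot,\cdot\,\rangle$ are exactly where the range $\sz>\tfrac12$ is used. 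Closedness needs nothing further, since $\ol\omega$ restricts to the closed form $\omega$ on the dense subbundle $T\M$.

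Strong nondegeneracy --- that $\ol\omega^\flat\colon\ol{T}_m\M\to(\ol{T}_m\M)^*$ is a Banach-space isomorphism --- is the heart of the matter. In the local model $\ol\omega^\flat$ is visibly Fredholm of index zero: on the $H^{1/2}$-summand it is $\p_\mu$ plus terms of lower order and finite rank, $\p_\mu\colon H^{1/2}(S^1,\g)\to H^{-1/2}(S^1,\g)$ being Fredholm of index zero with kernel the finite-dimensional space $L\g_\mu$ of $\mu$-parallel sections, while the finite-dimensional $F$-block carries the nondegenerate reduced symplectic form. It therefore suffices to prove injectivity: if $x=v+\ol\rho_m\xi$ lies in $\ker\ol\omega^\flat$, then vanishing of $\ol\omega(x,\ol\rho_m\eta)=\langle\d\Phi_m v-\p_\mu\xi,\eta\rangle$ for all $\eta$ forces $\p_\mu\xi=\d\Phi_m v\in H^{\sz-1}$, elliptic regularity for $\p_\mu$ then returns $\xi$ to $L\g$, so $x\in T_m\M$, and weak nondegeneracy of $\omega$ gives $x=0$. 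I expect the genuine difficulty to lie in (i) establishing the local normal form with the correct Sobolev bookkeeping, so that $\ol\omega^\flat$ really is Fredholm of index zero --- this is where the finite-dimensionality and symplecticity of the reduced spaces, hence the hypotheses on $\M$, enter essentially --- and (ii) the regularity estimates for $\p_\mu$ with a connection $\mu$ of only Sobolev class $\sz-1$; these borderline estimates are the main obstacle. Finally, the fibers $\ol{T}_m\M$ and the forms $\ol\omega$ assemble into a smooth $LG$-equivariant Banach vector bundle with a smooth fiberwise $2$-form, because the local normal form simultaneously trivializes $T\M$, $\rho$, $\d\Phi$ and $\omega$ in an invariant neighborhood of each orbit; equivariance and canonicity pass from the fibers to the bundle.
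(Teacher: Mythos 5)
Your construction of the completion is the same as the paper's: $\ol{T}\M$ is defined there by exactly your pushout, $(T\M\times\ol{L}\g)/\sim$ with $\ol{L}\g$ the Sobolev-$\tfrac12$ loops (your $\mu$-dependent norm on $H_m$ is equivalent to the $H^{1/2}$ norm), and the extension of $\omega$ is likewise dictated by the moment map condition. Where you genuinely diverge is in how strong nondegeneracy is established and in what replaces your ``local normal form''. The paper invokes the symplectic cross-section theorem of \cite[Section 4.2]{me:lo}: near $m$, with $\mu=\Phi(m)$, the cross-section $Y=\Phi^{-1}(V_\mu)$ is a finite-dimensional symplectic submanifold and there is an $\omega$-orthogonal, $LG_\mu$-equivariant splitting $T\M|_Y=TY\oplus(Y\times L\g_\mu^\perp)$, the form on the second summand being the coadjoint-orbit expression $\int_{S^1}\partial_\nu\xi_1\cdot\xi_2$; strong nondegeneracy of the extension to $TY\oplus(Y\times\ol{L}\g_\mu^\perp)$ is then read off block by block, with no Fredholm or regularity argument. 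You instead prove nondegeneracy by showing $\ol{\omega}^\flat$ is Fredholm of index zero and injective, with injectivity resting on elliptic regularity for $\partial_\mu$ with coefficients of class $H^{\sz-1}$; this is correct (a two-step bootstrap using $\sz>\tfrac12$ does close), and it buys an intrinsic, choice-free formula for the extended form that needs no $\omega$-orthogonality of the splitting, but it pays with exactly the borderline multiplication estimates you flag as obstacle (ii). As for obstacle (i): the finite-dimensional transversal structure you assume need not be re-derived from the holonomy picture of $\A/LG$ --- it is precisely the cross-section theorem above, which the paper simply cites, so the gap you identify is filled off the shelf. Two minor points: your sign $\d\Phi_m(\rho_m\xi)=-\partial_\mu\xi$ is the opposite of the paper's convention ($\xi_\A|_\mu=\partial_\mu\xi$ there), harmless since you are internally consistent; and your aside that $\Phi^{-1}(\mu)/LG_\mu$ is a finite-dimensional symplectic manifold is neither needed for your argument nor true without regularity assumptions, so it should be dropped.
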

Roughly speaking,  this completion $\ol{T}\M$ is obtained by taking the Sobolev $\f{1}{2}$ completion in orbit directions -- note that this is precisely the borderline case where $G$-valued loops no longer form a group. 

To define a spinor module, the next step is to choose a compatible complex structure. For finite-dimensional symplectic manifolds, the  resulting spinor module does not depend on the choice, up to isomorphism. In infinite dimensions, the situation is more delicate \cite{ply:sp}. We will consider equivalence classes of complex structures, where two complex structures are equivalent if their difference  is Hilbert-Schmidt. Such an equivalence class is sometimes called a \emph{polarization}. 
\begin{theorem}
The bundle $\ol{T}\M$ has a distinguished  $LG$-invariant polarization. It admits a global 
 $LG$-invariant $\omega$-compatible complex structure $J$ within this polarization class, unique up to homotopy. 
\end{theorem}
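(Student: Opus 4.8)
The plan is to build $J$ from two ingredients of very different nature: a \emph{canonical} complex structure on the completed orbit directions, coming from the sign of the operator $\partial_\theta$ acting on loops, together with an essentially unique compatible complex structure on a finite-dimensional symplectic complement. The distinguished polarization will be the Hilbert--Schmidt class represented by this model, and uniqueness up to homotopy will follow from contractibility of the relevant space of compatible complex structures.

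\emph{Step 1 (orbit directions).} For $m\in\M$ with $\mu=\Phi(m)$ let $a_m\colon L\g\to T_m\M$, $\xi\mapsto\xi_\M|_m$, be the infinitesimal action, with kernel $\k_m=\on{Lie}(LG_m)$. Since $\M$ is \emph{proper}, the stabilizer $LG_m$ is conjugate in $LG$ to a subgroup of $G$, so $\k_m$ is finite-dimensional and, after conjugation, embeds in $\g$ as constant loops. By Theorem 1 the completion replaces $\im(a_m)$ by its $H^{1/2}$-closure $\ol{\mathfrak o}_m\cong H^{1/2}(S^1,\g)/\k_m$. A direct computation from the moment map condition $\iota(\xi_\M)\omega=-\d\langle\Phi,\xi\rangle$ and the linearized gauge action shows that, up to an overall sign, on orbit directions
\[
\omega(\xi_\M,\eta_\M)=\Omega_0(\xi,\eta)+C_\mu(\xi,\eta),\qquad
\Omega_0(\xi,\eta)=\int_{S^1}\partial_\theta\xi\cdot\eta\,\d\theta,\quad
C_\mu(\xi,\eta)=\int_{S^1}\mu\cdot[\xi,\eta]\,\d\theta,
\]
so $\omega|_{\ol{\mathfrak o}_m}$ is the basic $2$-form $\Omega_0$ plus the order-zero correction $C_\mu$ determined by $\mu$. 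Let $J_0$ be the (appropriately signed) sign $\partial_\theta\,|\partial_\theta|^{-1}$ of $\partial_\theta$: it is bounded on every Sobolev space, satisfies $J_0^2=-1$ off the finite-dimensional kernel $\g$ of $\partial_\theta$, is orthogonal for $\Omega_0$, and $\Omega_0(\xi,J_0\xi)=\||\partial_\theta|^{1/2}\xi\|_{L^2}^2$ is, modulo that kernel, exactly the $H^{1/2}$ inner product -- which is why Theorem 1 produces the $H^{1/2}$-completion in orbit directions. As $\partial_\theta$ commutes with $\Ad$ of constant loops, $J_0$ is compatible with the identifications $\ol{\mathfrak o}_m\cong\ol{\mathfrak o}_{\lambda m}$ implemented by $\Ad_\lambda$; thus $J_0$ defines an $LG$-invariant field of operators on the orbit-direction subbundle, and its Hilbert--Schmidt class is the claimed polarization.

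\emph{Step 2 (finite-dimensional part and assembly).} Proper Hamiltonian $LG$-spaces admit slices with compact stabilizer, so one can choose an $LG$-invariant complement to $\ol{\mathfrak o}_m$ and an invariant metric on it by averaging over slices and patching with a partition of unity on the finite-dimensional quotient $\M/LG$. On the finite-dimensional subspace assembled from this complement and the residual kernel $\g/\k_m$ of $J_0$, the form $\omega$ is nondegenerate (this is the infinitesimal model of the associated quasi-Hamiltonian $G$-space), and there I would choose an invariant compatible complex structure, using that the space of such is nonempty and contractible. Combining this with $J_0$ gives an $LG$-invariant compatible \emph{almost} complex structure $J_1$, equivalently an $LG$-invariant metric $g=\omega(\cdot,J_1\cdot)$. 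Finally I would replace $J_1$ by the complex structure canonically produced from $(g,\omega)$: write $\omega^\flat=g^\flat A$ with $A$ the $g$-skew bundle endomorphism, and set $J=A|A|^{-1}$; this $J$ is $\omega$-compatible, and since $g$ and $\omega$ are $LG$-invariant, so is $J$. (Here $A$ is invertible because $\omega$ is strongly symplectic by Theorem 1.)

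\emph{Step 3 (polarization estimate; uniqueness).} It remains to verify that $J$ lies in the model polarization class, i.e.\ that $J-J_1$ is Hilbert--Schmidt; on the finite-dimensional part this is automatic, and on $\ol{\mathfrak o}_m$ it reduces to showing that the correction $C_\mu$, composed with $(\Omega_0^\flat)^{-1}$ (an operator of order $-1$ on the one-manifold $S^1$, hence Hilbert--Schmidt), gives a Hilbert--Schmidt perturbation of $\Omega_0$ on the $H^{1/2}$-completion. This is the step I expect to be the main obstacle: it forces one to pin down the precise mapping properties of the curvature-type term $C_\mu$ using the borderline Sobolev regularity of $\mu$ and the construction of $\ol T\M$ in Theorem 1; a secondary technical point is producing the $LG$-invariant metric of Step 2 despite $LG$ not being locally compact, which is exactly where the slice theorem for proper Hamiltonian loop group spaces is needed. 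Granting these, uniqueness up to homotopy follows formally: for two $LG$-invariant $\omega$-compatible complex structures $J,J'$ in the polarization class, the associated metrics $g,g'$ are $LG$-invariant and differ by a Hilbert--Schmidt term, and $t\mapsto$ (polar part of $\omega$ with respect to $(1-t)g+tg'$) is a homotopy through $LG$-invariant $\omega$-compatible complex structures within the class.
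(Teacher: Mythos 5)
Your overall architecture (a canonical complex structure on the completed orbit directions, an auxiliary compatible complex structure on a finite-dimensional complement, patching by invariant partitions of unity, contractibility for uniqueness) matches the paper's proof in outline, but two of your steps do not go through as written, and they are exactly the points the paper's proof is designed to avoid. In Step 1, the claim that $J_0=\partial_\theta/|\partial_\theta|$ ``defines an $LG$-invariant field of operators'' because $\partial_\theta$ commutes with $\Ad$ of \emph{constant} loops is incorrect: the identification of orbit directions at $m$ and at $\lambda\cdot m$ is implemented by $\Ad_\lambda$ for arbitrary $\lambda\in LG$, which does not commute with $\partial_\theta$ unless $\lambda$ is constant, so the transported field is not $J_0$. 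What is true (Pressley--Segal, quoted in Section 2.4 of the paper) is only that $\Ad_\lambda J_0\Ad_\lambda^{-1}-J_0$ is Hilbert--Schmidt, so at best the \emph{class} of $J_0$ is invariant --- and then you still owe its compatibility with $\omega$. Relatedly, the degenerate directions of $\omega$ on the orbit part at $m$ are $L\g_\mu/\k_m$ with $\mu=\Phi(m)$ (loops of the form $t\mapsto \Ad_{\gamma(t)^{-1}}X$), which agree with $\ker J_0=\g$ only after a pointwise, generally non-constant gauge transformation; so the finite-dimensional piece you assemble in Step 2 from ``$\ker J_0$ plus a complement'' is not canonically defined either. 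The paper sidesteps both issues by taking as the distinguished polarization the one defined pointwise by $J_\mu=\partial_\mu/|\partial_\mu|$ on $\ol{L}\g_\mu^\perp\subset\ol{T}_m\M$, $\mu=\Phi(m)$: this family is \emph{exactly} $LG$-equivariant, and via the symplectic cross-section theorem ($Y=\Phi^{-1}(V_\mu)$ finite-dimensional symplectic, with $\ol{T}\M|_Y\cong TY\oplus(Y\times\ol{L}\g_\mu^\perp)$ $\omega$-orthogonally) the complex structure it constructs differs from $J_\mu$ only by finite-rank operators, so membership in the polarization class is automatic.

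The second, and decisive, gap is your Step 3, which you yourself flag: you must show that the compatible structure built from the actual symplectic form lies in the $J_0$-class, i.e.\ essentially that $\on{sign}(\partial_\mu)-\on{sign}(\partial_0)$ is Hilbert--Schmidt on $\ol{L}\g$. Your heuristic ``$(\Omega_0^\flat)^{-1}$ has order $-1$ on $S^1$, hence Hilbert--Schmidt'' does not suffice at the paper's regularity: $\mu$ has Sobolev class $s-1$ with $s>\tfrac12$ possibly close to $\tfrac12$, so $\partial^{-1}\circ\ad_\mu$ gains only $s-\tfrac12$ derivatives, whereas Hilbert--Schmidt on a one-manifold requires a gain strictly greater than $\tfrac12$; any proof would need the sharper commutator-type criteria (as for $[\Pi,M_f]$ with $f\in H^{1/2}$), which you do not supply. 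So the key estimate on which your construction rests is missing, while the paper's choice of polarization makes it unnecessary (all discrepancies there are finite rank). Your uniqueness argument is fine and is essentially the paper's Section 2.3 --- interpolate and take the normalized/polar part, checking the correction is a Hilbert--Schmidt perturbation and that invariance is preserved --- but it only applies once an $LG$-invariant compatible $J$ in the distinguished class has actually been produced.
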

The Riemannian metric on $\ol{T}\M$ associated to $J$ and $\omega$ defines a bundle of Clifford algebras, $\Cl(\ol{T}\M)$, and using $J$ one obtains a $\Z_2$-graded spinor bundle, 
\[ \Cl(\ol{T}\M)\circlearrowright \Sz_{\ol{T}\M}.\]
For finite-dimensional Riemannian manifolds of even dimension, a $\Z_2$-graded spinor module over the Clifford bundle is the same thing as a $\Spin_c$-structure; hence we may think of $ \Sz_{\ol{T}\M}$ as defining a $\Spin_c$-structure on $\M$. Given a  pre-quantum line bundle $\ca{L}\to \M$, then one can form a new $\Spin_c$-structure $\ca{L}\otimes \Sz_{\ol{T} \M}$, and the action of a
$\Spin_c$-Dirac operator of $\M$ should formally describe the `quantization' of $\M$. In the case of moduli spaces of flat connections, a technique for constructing pre-quantum line bundles $\ca{L} \to \M$ was explained in \cite{rsw:cs}, in the context of geometric quantization for Chern-Simons theory.

In practice, dealing with Dirac operators in infinite dimensions is too difficult, and one prefers 
to work with suitable finite-dimensional counterparts. As shown in \cite{al:mom}, every 
proper Hamiltonian $LG$-space $(\M,\omega,\Phi)$ has an associated quasi-Hamiltonian $G$-space $(M,\omega_M,\Phi_M)$, with a group-valued moment map $\Phi_M\colon M\to G$. In this paper, we will interpret this relation using a correspondence diagram 
\[ \xymatrix{ & \ca{N}\ar[dl]_p\ar[dr]^q & \\
\M & & M
}\]
Here $\N$ is a Banach manifold with an action of $LG\times G$, where the $G$-action is principal with quotient map $p$, and the $LG$-action is principal as well, with quotient map $q$.   The idea is to pull the spinor module $\Sz_{\ol{T}\M}$ back to $\N$, and then 
push forward to $M$. For the second step, one uses the spinor module for the 
Lie algebra of the loop group, in order to quotient out the $LG$-orbit directions. This spinor module is equivariant with respect to a central extension of the loop group  $LG$ by $\U(1)$, where the central circle acts non-trivially. Hence, the construction does not quite give a 
spinor module over $TM$: 
\begin{theorem}
The bundle $q^*TM\to \N$ has an $\wh{LG}^{\on{spin}}\times G$-equivariant $\Spin_c$-structure, with associated spinor module 
\[ \Cl(q^*TM)\circlearrowright \Sz_{q^*TM}.\] 
Here $\wh{LG}^{\on{spin}}$ is the \emph{spin central extension} of the loop group. This $\Spin_c$-structure on $q^*TM$ is canonical, up to equivariant homotopy. 
\end{theorem}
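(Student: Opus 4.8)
The plan is to realize the finite-rank bundle $q^*TM$ as a ``difference'', formed via the correspondence space $\N$, between the completed tangent bundle of $\M$ and the Lie algebra of $LG$, and then to perform the same subtraction at the level of spinor modules, the missing ingredient being supplied by the spin module of the loop algebra. First I would set up the relevant tangent bundle identities on $\N$. Since the commuting $LG$- and $G$-actions on $\N$ are each free, the $LG\times G$-action is free, and its orbit directions span a subbundle with model fibre $\ol{L\g}\oplus\g$, where $\ol{L\g}$ denotes the $H^{1/2}$-completion of $L\g$ and the summand $\g$ needs no completion. Moreover the loop-group orbit directions of $\N$ are \emph{exactly} $\ul{L\g}$, so the completion of Theorem 1.1 -- $H^{1/2}$-completion in the loop-group orbit directions -- applies to $\N$ (with the proper moment map $\Phi\circ p\colon\N\to\A$) and produces $\ol{T}\N$ together with two exact sequences of $LG\times G$-equivariant Banach vector bundles,
\[
0\to\ul{\g}\to\ol{T}\N\xrightarrow{\ dp\ }p^*\ol{T}\M\to 0,
\qquad
0\to\ul{\ol{L\g}}\to\ol{T}\N\xrightarrow{\ dq\ }q^*TM\to 0 ;
\]
the second presents $q^*TM$ as $\ol{T}\N$ modulo the completed infinitesimal $LG$-action, and it is of finite rank because $M$ is finite-dimensional, while the first identifies $p^*\ol{T}\M$ with $\ol{T}\N/\ul{\g}$ because $dp$ sends the loop-group orbit directions of $\N$ onto those of $\M$. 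Both sequences split $LG\times G$-equivariantly: pushing them to the finite-dimensional base $M$, a splitting exists by a partition-of-unity argument, is made $G$-invariant by averaging over the compact group $G$, and pulls back to $\N$. I thus obtain an isomorphism of $LG\times G$-equivariant metrized vector bundles
\[
\V\ :=\ p^*\ol{T}\M\oplus\ul{\g}\ \cong\ q^*TM\oplus\ul{\ol{L\g}} ,
\]
with metrics induced by $\om$ and the complex structure $J$ of Theorem 1.2, by the inner product on $\g$, and by the $H^{1/2}$-metric on $\ol{L\g}$ (the metric on $q^*TM$ being defined by this splitting).

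Next I would carry out the subtraction. The left-hand side of the last isomorphism already has a spinor module: $p^*\Sz_{\ol{T}\M}$, the pullback of the $LG$-equivariant spinor bundle of Theorem 1.2, tensored with the finite-dimensional $\Spin_c$-structure $\Sz_\g$ of $\Cl(\g)$ -- the latter $\wh{G}^{\on{spin}}$-equivariant and canonical up to homotopy once a complex structure on $\g$ is chosen -- so that $\Sz_\V\cong p^*\Sz_{\ol{T}\M}\,\widehat{\otimes}\,\ul{\Sz_\g}$ as $\Z_2$-graded $\Cl(\V)$-modules. On the other side, $\ol{L\g}\otimes\C$ carries an $LG$-invariant polarization: fixing the \emph{same} complex structure on the zero Fourier mode $\g\otimes\C$, one splits $\ol{L\g}\otimes\C$ into positive and negative Fourier modes for the rotation action of $S^1$, and the associated spin module $\Sz_{\ol{L\g}}$ (a semi-infinite exterior algebra / fermionic Fock space) is a $\Z_2$-graded $\Cl(\ol{L\g})$-module carrying a genuine action only of the spin central extension $\wh{LG}^{\on{spin}}$, the central circle acting by a non-trivial scalar character. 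Using that, inside a suitable completion of $\Cl(\V)$, the graded commutant of $\Cl(\ul{\ol{L\g}})$ is $\Cl(q^*TM)$ (up to the usual odd-type conventions when $\dim G$ is odd), I would then \emph{define}
\[
\Sz_{q^*TM}\ :=\ \Hom_{\Cl(\ul{\ol{L\g}})}\!\big(\ul{\Sz_{\ol{L\g}}},\ p^*\Sz_{\ol{T}\M}\,\widehat{\otimes}\,\ul{\Sz_\g}\big);
\]
by the multiplicity decomposition $\Sz_\V\cong\Sz_{q^*TM}\,\widehat{\otimes}\,\ul{\Sz_{\ol{L\g}}}$ this is a $\Z_2$-graded $\Cl(q^*TM)$-module, i.e.\ a $\Spin_c$-structure on $q^*TM$ with spinor module $\Cl(q^*TM)\circlearrowright\Sz_{q^*TM}$. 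The auxiliary complex structure on $\g$ cancels between $\Sz_\g$ and the zero Fourier mode, so the result is independent of it.

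For the equivariance, the target $p^*\Sz_{\ol{T}\M}\,\widehat{\otimes}\,\ul{\Sz_\g}$ is honestly $LG$-equivariant while $\Sz_{\ol{L\g}}$ is $\wh{LG}^{\on{spin}}$-equivariant with non-trivial central character, so the Hom-space is $\wh{LG}^{\on{spin}}\times G$-equivariant with the central circle acting by the inverse character, as claimed; one also checks that the residual $\wh{G}^{\on{spin}}$-ambiguity of $\Sz_\g$ is absorbed into the restriction of $\wh{LG}^{\on{spin}}$ to constant loops, making the $G$-equivariance genuine. Canonicity up to $\wh{LG}^{\on{spin}}\times G$-equivariant homotopy then follows by running the construction in families: the invariant splittings of the two exact sequences vary in affine (hence contractible) spaces of sections over $M$; the compatible complex structure $J$ is unique up to homotopy by Theorem 1.2; and the polarization class of $\ol{L\g}$ is canonical. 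Any two systems of choices are joined by a path, yielding a homotopy of $\Z_2$-graded Clifford modules.

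The main obstacle will be analytic and infinite-dimensional. One must first set up the completions compatibly on $\N$, $\M$ and verify the analytic content behind the two exact sequences -- that $\ul{L\g}\hookrightarrow T\N$ and its $H^{1/2}$-closure remain split subbundles, so that $\ol{T}\N/\ul{\ol{L\g}}=q^*TM$ and $\ol{T}\N/\ul{\g}=p^*\ol{T}\M$ -- which rests on properness of $\Phi$ and finite-dimensionality of $M$. More seriously, the subtraction must be made rigorous: one has to pin down the completion $\Cl(\V)^{\wedge}$ of the Clifford algebra -- dictated by the Hilbert--Schmidt conditions defining the polarization classes of Theorem 1.2 and of $\Sz_{\ol{L\g}}$ -- for which $\Sz_\V$ is an essentially irreducible graded module, $\Cl(\ul{\ol{L\g}})$ acts with graded commutant exactly $\Cl(q^*TM)$, and the induced action on the multiplicity space is non-degenerate. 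This forces one to verify that the polarization of $\ol{L\g}$ is compatible with the restriction to the loop-group orbit directions of the polarization of $\ol{T}\M$ from Theorem 1.2 -- precisely, that the difference of the two is Hilbert--Schmidt. Once this compatibility of polarizations is in place, the remainder is bookkeeping with $\Z_2$-graded tensor products, commutants, and central extensions.
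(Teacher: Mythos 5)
Your overall architecture is the same as the paper's: split $\ol{T}\N$ in two ways using invariant connections for $p$ and $q$, define $\Sz_{q^*TM}$ as a $\Hom$-space over the Clifford algebra of the completed orbit directions using the spin module of the loop algebra, read off the $\wh{LG}^{\on{spin}}\times G$-equivariance from the central extension acting on that spin module, and get homotopy-canonicity from the contractibility of the spaces of choices. However, there is a genuine gap in your treatment of the zero modes. Your subtraction needs a spinor module $\Sz_\g$ for $\Cl(\g)$ and a polarized spin module $\Sz_{\ol{L\g}}$ obtained by ``fixing the same complex structure on the zero Fourier mode''; a complex structure on $\g$ exists only when $\dim\g$ is even, so for $G=\SU(2)$ (and every odd-dimensional $G$) this prescription is vacuous. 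The obstruction is not just notational: if $J$ were a complex structure on ${\bf L}\g$ with $J-J_0$ Hilbert--Schmidt, where $J_0=\partial_0/|\partial_0|$, a relative-index count of the $+\sqrt{-1}$ eigenspace of $J$ against the positive Fourier modes forces $\dim\ker J_0=\dim\g$ to be even; so in the odd case there is no complex structure in the natural polarization class at all, neither $\Sz_\g$ nor your $\Sz_{\ol{L\g}}$ exists as a $\Z_2$-graded module, and the factorization $\Sz_\V\cong\Sz_{q^*TM}\,\widehat\otimes\,\ul{\Sz_{\ol{L\g}}}$ has no meaning. Appealing to ``odd-type conventions'' does not recover the statement, in particular because $\wh{LG}^{\on{spin}}$ is by definition the extension implementing $LG$ on $\Sz_{{\bf L}\g\oplus\g}$. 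The paper's device, which your proposal is missing, is to add one extra trivial copy of $\g$ to both sides, comparing $p^*\ol{T}\M\times(\g\oplus\g)\cong\ol{T}\N\times\g\cong q^*TM\times(\ol{L}\g\oplus\g)$: the zero-mode block becomes $\g\oplus\g$ with its canonical complex structure $(X,Y)\mapsto(-Y,X)$, the factor $\wedge\g^\C$ replaces your $\Sz_\g$ with no auxiliary choice, and the subtraction of $\Sz_{{\bf L}\g\oplus\g}$ via Proposition \ref{prop:subspace} works uniformly in $G$.

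A second, smaller point: the analytic step you defer is where real work remains, and one part of your setup is not consistent as stated. You cannot independently ``put the $H^{1/2}$-metric'' on $\ul{\ol{L\g}}\subset\V$: the metric on $\V$ is already fixed by $\omega$, $J$ and the splittings, and its restriction to the orbit directions is the $\mu$-dependent form $\int D_\mu\,\xi\cdot\zeta$. To let the loop group's spin module act one needs a smooth equivariant isometry of $\ker(\ol{T}q)$ with $\N\times{\bf L}\g$, and the pointwise square root $D_\mu^{1/2}$ is not smooth in $\mu$ because the rank of $\ker\partial_\mu$ jumps. The paper replaces it by $\chi(\partial_\mu)^{1/2}$ for a strictly positive cutoff $\chi$, shows in Lemma \ref{lem:intertwines} (using cross-sections and a complex structure adapted to them as in Theorem \ref{th:completedtangentbundle}) that this intertwines the metrics up to Hilbert--Schmidt equivalence, and then corrects it to an exact isometry by Lemma \ref{lem:sim}. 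You correctly flag the Hilbert--Schmidt compatibility of polarizations as the thing to verify, but the construction of this isometric identification, and hence of the Clifford action of the loop directions on $\Sz_\V$, is absent from the proposal.
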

We think of  $\Sz_{q^*TM}$ as a \emph{twisted} $\Spin_c$-structure on $M$, in the spirit of Murray-Singer \cite{mur:ger} and Mathai-Melrose-Singer \cite{mat:fra}. In the pre-quantized case, one uses the pre-quantum line bundle $\ca L$ to define a new twisted 
$\Spin_c$-structure $p^*\ca{L}\otimes \Sz_{q^*TM}$. The twisted $\Spin_c$structure may also be interpreted in terms of a 
$G$-equivariant Morita morphism from the Clifford bundle $\Cl(TM)$ over $M$ to a Dixmier-Douady bundle $\mathsf{A}^{\on{spin}}$ over $G$; see \cite{al:ddd,me:twi}. 
\smallskip


Another finite-dimensional approach is the following abelianization procedure. Let $T\subset G$ be a maximal torus, with Lie algebra $\t$, and fix a system of positive roots of $(G,T)$. The integral lattice  $\Lambda\subset \t$ can be regarded as a subgroup of $LG$, consisting of exponential loops, while $N(T)\subset G$ 
is a subgroup of $LG$ consisting of constant loops. The central extension of $LG$ restricts to the subgroup $\Lambda\rtimes N(T)$.


Suppose the moment map $\Phi\colon \M\to \A$ is transverse to $\t$. Then $\ca{X}=\Phi^{-1}(\t)$ is a finite-dimensional pre-symplectic manifold, with a Hamiltonian action of $\Lambda\times T$, with an equivariant 
moment map  $\Phi_{\ca{X}}\colon \ca{X}\to \t$ where $\Lambda$ acts on $\t$ by translation. 

\begin{theorem}\label{th:3}
The $LG$-equivariant $\Spin_c$-structure on $\M$ determines a 
$\Spin_c$-structure 
\[ \Cl(T\ca{X})\circlearrowright \Sz_{T\ca{X}}\]
on $\ca{X}$, equivariant under the action of the spin-central extension of $\Lambda\rtimes N(T)$. Up to homotopy, the spinor module $ \Sz_{T\ca{X}}$ depends only on the choice  of positive roots.    
\end{theorem}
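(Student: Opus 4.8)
The plan is to obtain $\Sz_{T\X}$ by restricting and then reducing the infinite-dimensional spinor module $\Sz_{\ol T\M}$ along the inclusion $\X=\Phi^{-1}(\t)\hookrightarrow\M$, and to control the resulting structure group. First I would analyze the normal bundle of $\X$ in $\M$. Since $\Phi$ is transverse to $\t\subset\A$, the normal bundle $\N_{\X/\M}$ is canonically identified, via $d\Phi$, with the pullback of the (infinite-dimensional) normal bundle of $\t$ in $\A$; this latter bundle is, at each point $\mu\in\t$, the quotient $\A/\t$, and using the inner product it is modelled on the orthogonal complement of the constants in $L\g$ together with the non-Cartan part of $\t$, i.e.\ essentially $(L\g/\t)$. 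The key point is that this normal bundle carries a \emph{canonical} complex structure coming from the polarization of Theorem~1.4 restricted to these directions: the completed orbit directions $\ol{T}\M$ along $\X$ split, up to Hilbert--Schmidt corrections, as $T\X$ plus this normal piece, and the polarization class picks out a complex structure on the normal piece determined precisely by the choice of positive roots (the positive root spaces, and their $S^1$-Fourier modes, give the $+\i$-eigenspace).

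Next I would carry out the spinor-module bookkeeping. Over $\X$ we have the Clifford-module identity $\Sz_{\ol T\M}|_\X \cong \Sz_{T\X}\,\wh\otimes\,\Sz_{\N_{\X/\M}}$ (completed $\Z_2$-graded tensor product), valid because $T\X$ and $\N_{\X/\M}$ span $\ol T\M|_\X$ up to a Hilbert--Schmidt perturbation, which does not affect the spinor module up to homotopy by the polarization-independence built into Theorem~1.4. The normal factor $\Sz_{\N_{\X/\M}}$ is, by the previous paragraph, a \emph{fixed} (i.e.\ canonically trivialized up to homotopy) infinite-dimensional spinor module associated to the loop directions transverse to $\t$ — it is exactly the spinor module $\Sz_{L\g/\t}$ for the loop algebra modulo Cartan, the same object used in the $\wh{LG}^{\on{spin}}$-equivariant constructions earlier in the paper. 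Therefore one may \emph{define} $\Sz_{T\X}$ by dividing out this fixed factor; concretely, $\Sz_{T\X} := \Hom_{\Cl(\N_{\X/\M})}(\Sz_{\N_{\X/\M}},\, \Sz_{\ol T\M}|_\X)$, which is a genuinely finite-dimensional $\Z_2$-graded $\Cl(T\X)$-module, hence a $\Spin_c$-structure on the even-dimensional pre-symplectic manifold $\X$.

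For equivariance I would track how $\Lambda\rtimes N(T)\subset LG$ acts. The subgroup $\Lambda\rtimes N(T)$ preserves $\t\subset\A$ (lattice elements act by translation, $N(T)$ by the Weyl action), hence preserves $\X$, and it acts on both factors of the tensor decomposition. The central extension enters through the normal factor $\Sz_{\N_{\X/\M}}$: the spin central extension $\wh{LG}^{\on{spin}}$ restricts to a central extension of $\Lambda\rtimes N(T)$, and the fixed spinor module on the loop directions is a module for this restricted extension. Dividing out this factor shows $\Sz_{T\X}$ is equivariant for the spin-central extension of $\Lambda\rtimes N(T)$, with the central $\U(1)$ acting via the difference of the two central actions — which is precisely the statement. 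The dependence only on the positive-root choice follows because every other choice made along the way (the global complex structure $J$ within the polarization, the Hilbert--Schmidt splitting of $\ol T\M|_\X$) affects $\Sz_{T\X}$ only up to equivariant homotopy, by Theorem~1.4.

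The main obstacle I anticipate is making rigorous the splitting $\ol T\M|_\X \simeq T\X \oplus \N_{\X/\M}$ \emph{compatibly with the polarization and with the $\Lambda\rtimes N(T)$-action simultaneously}: a priori the weakly symplectic form restricted to $T\X$ is only pre-symplectic (it has the moment-map flat directions of $\Lambda\times T$ as kernel), so one must check that the completion $\ol{T}\M$ of Theorem~1.3 interacts correctly with the non-closed, degenerate form on $\X$, and that the normal complex structure singled out by the positive roots is genuinely invariant and lies in the right Hilbert--Schmidt class relative to $J|_\X$. This is where the transversality hypothesis on $\Phi$ must be used most carefully, to guarantee $\N_{\X/\M}$ is a well-behaved $LG|_{\Lambda\rtimes N(T)}$-equivariant subbundle on the nose rather than merely fibrewise.
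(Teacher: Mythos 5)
Your proposal is sound, but it takes a genuinely different route from the paper's own proof --- in fact it is essentially the alternative construction that the paper records only as a remark after its proof. The paper does not restrict $\Sz_{\ol{T}\M}$ directly: it uses the twisted $\Spin_c$-structure $\Sz_{q^*TM}$ over $\N$ already built in Section \ref{subsec:twistedgeneral}, views $\X=\Phi_\N^{-1}(\t)\subset \N$, splits $q^*TM|_{\X}=T\X\oplus(\X\times \g/\t)$ using $\Phi_M$ and an invariant metric, and sets $\Sz_{T\X}=\Hom_{\Cl(\X\times\g/\t)}(\X\times\Sz_{\g/\t},\,\Sz_{q^*TM}|_{\X})$; the central extension of $\Lambda\rtimes N(T)$ then comes from the $\wh{LG}^{\on{spin}}$-equivariance of $\Sz_{q^*TM}$ combined with the $N(T)$-action on $\Sz_{\g/\t}$. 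This buys brevity: the infinite-dimensional analysis (identifying the $\ol{L}\g$-modelled directions, whose metric varies with the base point, with the fixed $L^2$-model up to Hilbert--Schmidt) was done once via the smoothed square root $\chi(\partial_\mu)^{1/2}$ and Lemma \ref{lem:intertwines}, and the abelianization step reduces to finite-dimensional linear algebra. Your route, dividing $\Sz_{\ol{T}\M}|_{\X}$ by the infinite-dimensional factor $\Sz_{\mathbf{L}\g/\t}$, avoids the quasi-Hamiltonian side entirely, but the ``main obstacle'' you flag is resolved not by the polarization-independence of the complex structure alone: you must redo the Section \ref{subsec:twistedgeneral} argument over $\X$, i.e.\ use $\chi(\partial_\mu)^{1/2}$ to obtain an equivariant isometric identification of the normal directions (modelled on $\ol{L}\g/\t$, Sobolev $\tfrac{1}{2}$, with point-dependent metric) with the fixed module $\mathbf{L}\g/\t$, and then invoke Proposition \ref{prop:subspace} to extract the finite-rank $\Cl(T\X)$-module. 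Two small corrections: the polarization class does not ``pick out'' a complex structure on the normal piece --- it is the choice of positive (affine) roots that fixes a representative within the class, notably on the zero modes $\g/\t$ where $\partial_\mu$ may vanish, and this is exactly where that choice enters the statement; and since $\Sz_{\ol{T}\M}$ carries an honest $LG$-action, the central circle acts on your Hom-space purely through the $\Sz_{\mathbf{L}\g/\t}$ factor (up to the opposite coming from contravariance), rather than through a difference of two nontrivial central actions.
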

In the pre-quantized case, one considers the Dirac operator associated to the new spinor module $\ca{L}|_{\ca{X}}\otimes \Sz_{T\ca{X}}$; it is equivariant for the action of a 
semi-direct product of $T$ with a (different) central extension $\wh{\Lambda}$ of the lattice. In forthcoming work, we will show that the associated Dirac operator over the 
non-compact manifold $\ca{X}$ has a well-defined $T$-equivariant index, with finite multiplicities. This crucially depends on the $\wh{\Lambda}$-equivariance. We 
will show furthermore how to compute this index using localization for the norm-square of the moment map. In the case of a compact Hamiltonian $G$-space, an analogous formula was proved by Paradan \cite{par:rr}.  In the loop group setting, the norm-square of the moment map has been used to prove a Kirwan surjectivity theorem \cite{btw:kir}, and to study (twisted) Duistermaat-Heckman distributions \cite{loi:nor}.

The organization of this article is as follows. Section \ref{sec:infinity} starts with a review of spinor modules in infinite dimensions, recalling the classical result that the isomorphism class of such a spinor module defined by a complex structure depends on the polarization class of the complex structure. Given a symplectic form, we show that any two 
compatible complex structures in a given polarization class are homotopic within that polarization class. The subsequent section \ref{sec:ham} explains the relationship between Hamiltonian loop group spaces and quasi-Hamiltonian spaces as a Morita equivalence. Section \ref{sec:spin} constructs the spinor bundle for a Hamiltonian loop group space, and Section \ref{sec:twist} gives the twisted $\Spin_c$-structure for the associated quasi-Hamiltonian space. Section \ref{sec:abel} is concerned with the abelianization procedure for the transverse case, along with a discussion of how to adjust this procedure for the possibly non-transverse case.  The final Section \ref{sec:twisted} explains how to generalize all these constructions to the case of twisted loop groups (where the twist is by an automorphism $\kappa\in \on{Aut}(G)$) and the associated $\kappa$-twisted quasi-Hamiltonian spaces.\\

\smallskip

\noindent \textbf{Acknowledgements.} We are grateful to Nigel Higson and Tudor Ratiu for helpful discussions.

\section{Spinor modules in infinite dimensions}\label{sec:infinity}
The theory of spinor modules for infinite-dimensional real Hilbert spaces $\mathsf{H}$ was developed in the 1960s by Shale and Stinespring, Araki, and others. References for this section include the books by Plymen-Robinson \cite{ply:sp} and Pressley-Segal \cite{pr:lo}; see also Freed-Hopkins-Teleman \cite[Section 3.1]{fr:lo2}.  

In this section, we will review some of the theory, with its application to the construction of the spin representation of the loop group. We will also need to discuss the setting where the initial data given on $\mathsf{H}$ is a symplectic structure rather than a Riemannian metric, with the latter depending on a choice of a compatible complex structure. 

\subsection{Notation}
Recall that a topological vector space $\mathsf{H}$ is \emph{banachable} (resp. \emph{hilbertable}) if its topology may be defined by a Banach norm (resp. Hilbert inner product) on $\mathsf{H}$. We will use the simpler terminology \emph{Banach space} (resp. Hilbert space), keeping in mind that we do not consider the norm (resp. Hilbert metric) to be part of the structure. For (real or complex) Banach spaces $\mathsf{H}_1,\mathsf{H}_2$, we denote by $\mathbb{B}(\mathsf{H}_1,\mathsf{H}_2)$ the Banach space of continuous linear operators from $\mathsf{H}_1$ to $\mathsf{H}_2$, and by $\mathbb{K}(\mathsf{H}_1,\mathsf{H}_2)$ the compact operators (the limits of finite rank operators). If $\mathsf{H}_1=\mathsf{H}_2=\mathsf{H}$ we write $\mathbb{B}(\mathsf{H})$ and $\mathbb{K}(\mathsf{H})$ for the algebras of bounded and compact operators, respectively.   If $\mathsf{H}_1,\mathsf{H}_2$ are separable 
Hilbert spaces, we write $\mathbb{B}_{HS}(\mathsf{H}_1,\mathsf{H}_2)\cong \mathsf{H}_2\otimes \mathsf{H}_1^*$ (using the Hilbert space tensor product) for the space of Hilbert-Schmidt operators from $\mathsf{H}_1$ to $\mathsf{H}_2$, with the notation $\mathbb{B}_{HS}(\mathsf{H})$ if $\mathsf{H}_1=\mathsf{H}_2=\mathsf{H}$. (This does not depend on the choice of metric, only on the topology.)  Introduce an equivalence relation on bounded operators $A\in \mathbb{B}(\mathsf{H})$, where 
\begin{equation}\label{eq:relation}
 A_0\sim A_1\ \ \ \Leftrightarrow \ \ \ A_0-A_1\in \mathbb{B}_{HS}(\mathsf{H}).
\end{equation}
More generally, we have such an equivalence relation on the space $\mathbb{B}(\mathsf{H}_1,\mathsf{H}_2)$ of bounded linear operators between possibly different Hilbert spaces. 

Let $\ca{J}(\mathsf{H})\subset \mathbb{B}(\mathsf{H})$ be the set of complex structures on $\mathsf{H}$, that is, $J^2=-\on{id}$. Following \cite{fr:lo2,pr:lo}  
we define
\begin{definition}
A \emph{polarization} of a real Hilbert space $\mathsf{H}$ is an equivalence class of complex structures $J\in \ca{J}(\mathsf{H})$, using the equivalence relation \eqref{eq:relation}. We denote by $\ca{J}_{\on{res}}(\mathsf{H})$ the set of complex structures in the given polarization class. 
\end{definition}
%

\subsection{Spinor modules}\label{subsec:spinor}
Let $\mathsf{H}$ be a separable real Hilbert space, and $g$ a Riemannian metric on $\mathsf{H}$, in the strong sense that the map $g^\flat \colon \mathsf{H}\to \mathsf{H}^*$ is an isomorphism.  We denote by $\on{O}(\mathsf{H})=\on{O}(\mathsf{H},g)\subset \mathbb{B}(\mathsf{H})$ the corresponding orthogonal group, and by $\Cl(\mathsf{H})=\Cl(\mathsf{H},g)$ the Clifford algebra, i.e. the complex algebra linearly generated by $\mathsf{H}$, with relations 
\[ v_1v_2+v_2v_1=2g(v_1,v_2),\ \ \ v_1,v_2\in \mathsf{H}.\] 
Given an orthogonal complex structure $J\in \on{O}(\mathsf{H})$ (that is, $J$ preserves $g$ and squares to $-\on{id}$), the complexified Hilbert space splits as 
$\mathsf{H}^\C=\mathsf{H}^+\oplus \mathsf{H}^-$, where $\mathsf{H}^\pm$ are the $\pm\sqrt{-1}$ eigenspaces of $J$. 
The \emph{restricted orthogonal group} $\on{O}_{res}(\mathsf{H})$ consists of those 
orthogonal transformations 
$A\in \on{O}(\mathsf{H})$ that preserve the polarization, i.e.,  such that $AJA^{-1}\sim J$. Let $\wedge \mathsf{H}^+$ be the exterior algebra over $\mathsf{H}^+$. Its Hilbert space completion  
\[ \Sz_\mathsf{H}=\ol{\wedge \mathsf{H}^{+}}\]
has the structure of a $\Z_2$-graded unitary $\Cl(\mathsf{H})$-module, where the Clifford action $\varrho(v)$ of elements  
$v\in \mathsf{H}^+$ is by exterior multiplication and that of $v\in \mathsf{H}^-$ is contraction. It will be referred to as the \emph{spinor module} defined by $J$. We sometimes write $\Sz_{\mathsf{H},J}$ to indicate the dependence on $J$.

The following theorem summarizes results of Araki and Shale-Stinespring, see \cite[Chapter 3]{ply:sp} 
\begin{theorem}[Spinor modules in infinite dimensions]\label{th:shasp}  
\vskip.0in \ \

\begin{enumerate}
\item 
The spinor modules defined by two orthogonal complex structures $J_0,J_1$ are isomorphic as ungraded $\Cl(\mathsf{H})$-modules if and only if they define the same polarization, i.e. $J_0\sim J_1$.  
\item 
In this case, the space 
$L=\on{Hom}_{\Cl(\mathsf{H})}(\Sz_{\mathsf{H},J_0},\Sz_{\mathsf{H},J_1})$ of intertwining operators is 1-dimensional, and $\Sz_{\mathsf{H},J_1}=L\otimes \Sz_{\mathsf{H},J_0}$. For $J_0\sim J_1$, the kernel of $J_0+J_1$ is even dimensional, and the parity of $L$ coincides with the parity of $\hh \dim \ker(J_0+J_1)$. 
\item 
An orthogonal transformation $A\in \on{O}(\mathsf{H})$ admits a unitary implementer $\wh{A}\in \on{U}(\Sz_{\mathsf{H},J})$, i.e. 
$\varrho(Av)=\wt{A}\varrho(v) \wt{A}^{-1}$ for $v\in \mathsf{H}$, if and only if $A\in \on{O}_{res}(\mathsf{H})$. In this case, the implementer is unique up to scalar. 
\end{enumerate}
\end{theorem}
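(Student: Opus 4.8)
The plan is to work throughout with the concrete Fock model $\Sz_{\mathsf{H},J}=\ol{\wedge\mathsf{H}^+}$ and to extract all three statements from one basic fact, which I would establish first: for every orthogonal complex structure $J$, the module $\Sz_{\mathsf{H},J}$ is \emph{irreducible}, in the sense that $\End_{\Cl(\mathsf{H})}(\Sz_{\mathsf{H},J})=\C\cdot\on{id}$. This follows from the standard vacuum argument. The vector $\Omega=1\in\wedge^0\mathsf{H}^+$ is, up to a scalar, the only vector of $\ol{\wedge\mathsf{H}^+}$ annihilated by all $\varrho(w)$ with $w\in\mathsf{H}^-$ (a vector killed by every contraction is forced to have pure degree zero), and $\Omega$ is cyclic, since the vectors $\varrho(v_1)\cdots\varrho(v_k)\Omega$ with $v_j\in\mathsf{H}^+$ span a dense subspace. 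Hence if $B$ commutes with every $\varrho(v)$, then $B\Omega$ is again killed by all $\varrho(w)$, $w\in\mathsf{H}^-$, so $B\Omega=c\,\Omega$, and then $B=c\,\on{id}$ on the cyclic span, hence everywhere. Combined with the observation that a bounded invertible $\Cl(\mathsf{H})$-intertwiner $\Phi$ between spinor modules can be rescaled to a unitary one --- its adjoint is again an intertwiner (using $\varrho(v)^*=\varrho(v)$ for $v\in\mathsf{H}$), so $\Phi^*\Phi$ is a positive self-intertwiner, hence a positive scalar by irreducibility --- this is the form of Schur's lemma on which all of (a), (b), (c) rest; in particular it already yields the uniqueness-up-to-scalar asserted in (c).

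To prove (a), the main device is the \emph{Bogoliubov vacuum}. Given orthogonal complex structures $J_0,J_1$, split $\mathsf{H}=\mathsf{V}\oplus\mathsf{W}$ $g$-orthogonally with $\mathsf{V}=\ker(J_0+J_1)$; a short computation shows $\mathsf{V}$ is $J_0$- and $J_1$-invariant with $J_1=-J_0$ on it, while on $\mathsf{W}$ one has $\ker(J_0+J_1)=0$, so that the $+\sqrt{-1}$-eigenspace $\mathsf{W}_1^+$ of $J_1$ inside $\mathsf{W}^\C$ is the graph of an operator $T\colon\mathsf{W}_0^+\to\mathsf{W}_0^-$. Solving the annihilation equations for a $J_1$-vacuum inside $\Sz_{\mathsf{W},J_0}$ produces the explicit vector $\exp(\om_T)$, where $\om_T\in\wedge^2\mathsf{W}_0^+$ is the skew element determined by $T$ and the metric; I would check that this series converges in $\ol{\wedge\mathsf{W}_0^+}$ exactly when $\om_T$ lies in the Hilbert-space exterior square, equivalently when $T$ is Hilbert--Schmidt, with squared norm $\prod_i(1+t_i^2)$ for the singular values $t_i$ of $T$. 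If $J_0\sim J_1$, then $T$ is Hilbert--Schmidt, $\Cl(\mathsf{W})\cdot\exp(\om_T)$ spans a $\Cl(\mathsf{W})$-submodule of $\Sz_{\mathsf{W},J_0}$ realizing $\Sz_{\mathsf{W},J_1}$, hence all of $\Sz_{\mathsf{W},J_0}$ by irreducibility; tensoring with the finite-dimensional factor over $\mathsf{V}$, on which all complex structures yield isomorphic ungraded $\Cl(\mathsf{V})$-modules, gives $\Sz_{\mathsf{H},J_0}\cong\Sz_{\mathsf{H},J_1}$. Conversely, a nonzero intertwiner $\Sz_{\mathsf{H},J_0}\to\Sz_{\mathsf{H},J_1}$, taken unitary, pulls the $J_1$-vacuum back to a nonzero vector of $\Sz_{\mathsf{H},J_0}$ killed by all $\varrho(w)$, $w\in\mathsf{H}_1^-$ (the $-\sqrt{-1}$-eigenspace of $J_1$); the analysis above identifies this vector, up to a scalar, as a top exterior monomial over $\mathsf{V}_0^+$ times $\exp(\om_T)$, whose finiteness already forces $\mathsf{V}$ to be finite-dimensional and $T$ Hilbert--Schmidt, i.e.\ $J_0\sim J_1$.

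For (b), assume $J_0\sim J_1$; by (a) the space $L=\Hom_{\Cl(\mathsf{H})}(\Sz_{\mathsf{H},J_0},\Sz_{\mathsf{H},J_1})$ is nonzero, and for nonzero $\ell_1,\ell_2\in L$ the operator $\ell_2^{-1}\ell_1$ lies in $\End_{\Cl(\mathsf{H})}(\Sz_{\mathsf{H},J_0})=\C$, so $\dim L=1$; the evaluation map $L\otimes\Sz_{\mathsf{H},J_0}\to\Sz_{\mathsf{H},J_1}$ is then a $\Cl(\mathsf{H})$-linear isomorphism (nonzero, hence onto by irreducibility, and injective because $L$ is a line of isomorphisms). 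Being one-dimensional, $L$ carries a pure $\Z_2$-degree, which I would compute factor by factor along $\mathsf{H}=\mathsf{V}\oplus\mathsf{W}$, using $\Sz_{\mathsf{H},J_i}=\Sz_{\mathsf{V},J_i}\otimes\Sz_{\mathsf{W},J_i}$ (graded tensor product). Over $\mathsf{W}$ the Bogoliubov vacuum $\exp(\om_T)$ is a sum of elements of $\wedge^{2k}\mathsf{W}_0^+$, hence purely even, and the $J_1$-vacuum is even, so the intertwiner line over $\mathsf{W}$ is even. Over the finite-dimensional factor $\mathsf{V}$, where $J_1=-J_0$, a direct computation in $\Cl(\mathsf{V})$ --- reducing by the graded tensor product to the two-dimensional case, in which the unique intertwiner interchanges the even and odd lines --- shows the intertwiner line is even or odd according to the parity of $\hh\dim\mathsf{V}$. (That $\dim\mathsf{V}=\dim\ker(J_0+J_1)$ is even is automatic, since $\mathsf{V}$ is $J_0$-invariant, hence a complex vector space.) Multiplying the two contributions, the parity of $L$ coincides with that of $\hh\dim\ker(J_0+J_1)$.

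For (c), observe that $\wh{A}$ is a unitary implementer of $A\in\on{O}(\mathsf{H})$ precisely when it is a unitary $\Cl(\mathsf{H})$-intertwiner from $(\Sz_{\mathsf{H},J},\varrho)$ to the module $(\Sz_{\mathsf{H},J},\ v\mapsto\varrho(Av))$; inspecting which vectors act by creation identifies the latter with $\Sz_{\mathsf{H},A^{-1}JA}$. By (a) such an intertwiner exists iff $J\sim A^{-1}JA$, and since the Hilbert--Schmidt ideal is stable under conjugation by the bounded invertible $A$, this is equivalent to $AJA^{-1}\sim J$, i.e.\ to $A\in\on{O}_{res}(\mathsf{H})$; an invertible intertwiner is upgraded to a unitary one as in the first paragraph, and uniqueness up to a scalar again follows from $\End_{\Cl(\mathsf{H})}(\Sz_{\mathsf{H},J})=\C$. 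I expect the genuine work to lie in two places: the convergence estimate for the Bogoliubov vacuum --- this is exactly where the Hilbert--Schmidt hypothesis, equivalently the preservation of the polarization, is used in an essential way --- and the parity bookkeeping in (b), especially the clean reduction of the infinite-dimensional comparison to its finite-dimensional core and the careful handling of the non-generic case $\ker(J_0+J_1)\neq 0$; the remainder is routine operator-algebra manipulation.
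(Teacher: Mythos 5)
You should be aware that the paper does not prove this statement at all: Theorem \ref{th:shasp} is explicitly presented as a summary of classical results of Araki and Shale--Stinespring, with the proof delegated to Plymen--Robinson \cite[Chapter 3]{ply:sp}. So there is no in-paper argument to compare against; the relevant comparison is with the cited literature, and your sketch is in fact a faithful reconstruction of that standard route: irreducibility of the Fock module plus Schur's lemma (which gives the uniqueness statements in (b) and (c)), the Bogoliubov vacuum $\exp(\om_T)$ with the convergence criterion $\|\exp(\om_T)\|^2=\prod_i(1+t_i^2)<\infty$ iff $T$ is Hilbert--Schmidt (which gives the ``if'' direction of (a) and, after the reduction of (c) to (a) via the twisted module $v\mapsto\varrho(Av)\cong\Sz_{\mathsf{H},A^{-1}JA}$, the implementability criterion), and the splitting $\mathsf{H}=\ker(J_0+J_1)\oplus\mathsf{W}$ for the parity bookkeeping in (b). All of this is correct in outline and is essentially how \cite{ply:sp} proceeds.

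One point deserves more care than your sketch gives it, and it is precisely where the cited sources do their real work: in the ``only if'' direction of (a) you describe the pulled-back vacuum as a top monomial over $\mathsf{V}_0^+$ times $\exp(\om_T)$, but a priori (before knowing $J_0\sim J_1$) the eigenspace $\mathsf{W}_1^+$ need not be the graph of a \emph{bounded} operator $T\colon\mathsf{W}_0^+\to\mathsf{W}_0^-$ --- the projection $\mathsf{W}_1^+\to\mathsf{W}_0^+$ is injective with dense range, so $T$ is only densely defined and possibly unbounded. The correct order of argument is the reverse of what you wrote: starting from a nonzero vector in $\ol{\wedge\mathsf{H}_0^+}$ annihilated by $\varrho(w)$ for all $w\in\mathsf{H}_1^-$, one extracts from its degree-two component (equivalently, from the two-point function of the associated state) an everywhere-defined Hilbert--Schmidt operator and then shows it implements the graph relation, which forces $\dim\mathsf{V}<\infty$ and $J_0\sim J_1$. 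With that reordering your argument closes up; the parity computation in (b) and the reduction in (c) are fine as stated, granting the homogeneity of the one-dimensional intertwiner space $L$, which follows since $L$ is preserved by the induced $\Z_2$-grading on $\Hom$.
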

Part (c) defines a central extension of the restricted orthogonal group 
$\on{O}_{res}(\mathsf{H})$ by $\U(1)$; this can be taken as a definition of the 
group $\on{Pin}_c(\mathsf{H})$ with respect to the choice of polarization. 

We will also need the following addendum: 
%
\begin{proposition}\label{prop:subspace}
 Let $W\subset \mathsf{H}$ be a finite-dimensional subspace of even dimension, and $\mathsf{H}'=W^\perp$ its orthogonal space with respect to $g$. Let $J,J'$ be orthogonal complex structures on $\mathsf{H},\mathsf{H}'$. Then 
\[ \Sz_W=\Hom_{\Cl(\mathsf{H}')}(\Sz_{\mathsf{H}'},\Sz_\mathsf{H})\]
is non-trivial if and only if $J\sim J'$, and in this case it is a spinor module over $\Cl(W)$. 
\end{proposition}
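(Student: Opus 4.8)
The plan is to reduce Proposition \ref{prop:subspace} to part (a) of Theorem \ref{th:shasp}, exploiting the tensor-factorization of spinor modules. First I would observe that since $\mathsf{H}=W\oplus \mathsf{H}'$ orthogonally, with $W$ finite-dimensional, one has a Clifford algebra identification $\Cl(\mathsf{H})\cong \Cl(W)\,\widehat{\otimes}\,\Cl(\mathsf{H}')$ (the $\Z_2$-graded tensor product), and $\mathsf{H}'$ inherits the polarization class defined by any orthogonal complex structure. Choosing $J'$ on $\mathsf{H}'$ together with any orthogonal complex structure $J_W$ on $W$, the direct sum $J_W\oplus J'$ is an orthogonal complex structure on $\mathsf{H}$, and the associated spinor module factors as $\Sz_{\mathsf{H},J_W\oplus J'}\cong \Sz_{W,J_W}\otimes \Sz_{\mathsf{H}',J'}$ (here there is no completion issue in the $W$-factor since it is finite-dimensional, and the Hilbert space completion of $\wedge(\mathsf{H}')^+$ tensored with $\wedge W^+$ is the completion of $\wedge \mathsf{H}^+$).

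Next I would compute $\Hom_{\Cl(\mathsf{H}')}(\Sz_{\mathsf{H}'},\Sz_{\mathsf{H},J_W\oplus J'})$. Using the factorization and the fact that $\Sz_{W,J_W}$ is a finite-dimensional $\Cl(W)$-module on which $\Cl(\mathsf{H}')$ acts trivially, one gets
\[
\Hom_{\Cl(\mathsf{H}')}\big(\Sz_{\mathsf{H}',J'},\,\Sz_{W,J_W}\otimes \Sz_{\mathsf{H}',J'}\big)\;\cong\;\Sz_{W,J_W}\otimes \Hom_{\Cl(\mathsf{H}')}(\Sz_{\mathsf{H}',J'},\Sz_{\mathsf{H}',J'})\;\cong\;\Sz_{W,J_W},
\]
since the endomorphism algebra of an irreducible $\Cl(\mathsf{H}')$-module is one-dimensional (this is the irreducibility/uniqueness implicit in Theorem \ref{th:shasp}(b)). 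Hence for the \emph{particular} complex structure $J_W\oplus J'$ on $\mathsf{H}$, the claimed $\Hom$-space is exactly the spinor module $\Sz_W$ over $\Cl(W)$, which is non-trivial. The remaining point is to pass from $J_W\oplus J'$ to an arbitrary orthogonal complex structure $J$ on $\mathsf{H}$. By Theorem \ref{th:shasp}(a), $\Sz_{\mathsf{H},J}$ is isomorphic as an ungraded $\Cl(\mathsf{H})$-module to $\Sz_{\mathsf{H},J_W\oplus J'}$ precisely when $J\sim J_W\oplus J'$ as polarizations of $\mathsf{H}$; and when this holds the isomorphism $\Sz_{\mathsf{H},J}\cong L\otimes \Sz_{\mathsf{H},J_W\oplus J'}$ of Theorem \ref{th:shasp}(b) induces $\Hom_{\Cl(\mathsf{H}')}(\Sz_{\mathsf{H}'},\Sz_{\mathsf{H},J})\cong L\otimes \Sz_W$, still a spinor module over $\Cl(W)$ (tensoring with the line $L$ does not change this). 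Conversely, if $J\not\sim J_W\oplus J'$, I would argue that the $\Hom$-space vanishes: any nonzero $\Cl(\mathsf{H}')$-intertwiner $\Sz_{\mathsf{H}',J'}\to \Sz_{\mathsf{H},J}$ would, after tensoring up to a $\Cl(\mathsf{H})$-intertwiner on the $\Sz_{W}$-isotypical piece, force $\Sz_{\mathsf{H},J}$ to contain a copy of $\Sz_{\mathsf{H}',J'}$ as a $\Cl(\mathsf{H}')$-module, hence (restricting the polarization classes to $\mathsf{H}'$) would give $J|\sim J'$ on $\mathsf{H}'$; combined with $J_W$ being a complex structure on the finite-dimensional complement, this yields $J\sim J_W\oplus J'$, a contradiction.

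The final step is the translation between the two formulations of the non-triviality criterion: "$J\sim J'$" in the statement of the proposition should be read as "$J$ restricts, modulo a complex structure on $W$, to the polarization class of $J'$ on $\mathsf{H}'$", equivalently "$J\sim J_W\oplus J'$ for some (any) orthogonal complex structure $J_W$ on $W$". I would make this precise with a short lemma: if $A\colon \mathsf{H}\to \mathsf{H}$ is orthogonal with $AJA^{-1}\sim J$ and $A$ preserves the decomposition $\mathsf{H}=W\oplus \mathsf{H}'$ up to Hilbert--Schmidt corrections, then the induced complex structure on $\mathsf{H}'$ lies in the class of $J'$; since $W$ is finite-dimensional, any bounded operator supported near $W$ is Hilbert--Schmidt, so the condition $J\sim J_W\oplus J'$ is independent of the auxiliary choice of $J_W$ and of the choice of orthogonal splitting.

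The main obstacle I expect is the direction showing \emph{vanishing} when the polarizations disagree: one must rule out the existence of \emph{any} bounded $\Cl(\mathsf{H}')$-intertwiner, not merely a Hilbert--Schmidt or trace-class one. The clean way around this is to use that $\Sz_{\mathsf{H}',J'}$ is an irreducible $\Cl(\mathsf{H}')$-module and that a $\Cl(\mathsf{H})$-module decomposes into its $\Cl(\mathsf{H}')$-isotypical components indexed by the polarization class realized on $\mathsf{H}'$ — so a nonzero intertwiner detects that $\Sz_{\mathsf{H},J}$ has a nonzero $\Sz_{\mathsf{H}',J'}$-isotypical part, which by the factorization forces the polarization classes to match. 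I would lean on \cite[Chapter 3]{ply:sp} for the statement that inequivalent spinor modules over $\Cl(\mathsf{H}')$ share no subrepresentations, which makes this argument rigorous without re-deriving the von Neumann algebra structure.
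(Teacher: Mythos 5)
Your proposal is correct and follows essentially the same route as the paper: pick an auxiliary orthogonal complex structure $J_W$ on $W$, use the factorization $\Sz_{\mathsf{H},J_W\oplus J'}\cong \Sz_{W,J_W}\otimes\Sz_{\mathsf{H}',J'}$ (noting $J_W\oplus J'\sim J'$ since $J_W$ has finite rank), and then compare $\Sz_{\mathsf{H},J}$ with $\Sz_{\mathsf{H},J_W\oplus J'}$ via Theorem \ref{th:shasp}, obtaining $\Sz_W\cong \Sz_{W,J_W}\otimes L$. Your extra discussion of the vanishing direction and of the meaning of $J\sim J'$ only spells out what the paper's shorter proof leaves implicit.
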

In this statement, $J'\in \mathbb{B}(\mathsf{H}')$ is regarded as an operator  on $\mathsf{H}$, acting as zero on $W$. 
\begin{proof}
Pick an auxiliary orthogonal complex structure $J_W$ on $W$. Then $J''=J_W\oplus J'$ is an orthogonal complex structure on $\mathsf{H}=W\oplus \mathsf{H}'$. Since $J''\sim J'$ we have that $\Sz_{J''}=\Sz_{J_W}\otimes \Sz_{J'}$, hence 
\[ \Hom_{\Cl(\mathsf{H}')}(\Sz_{\mathsf{H}',J'},\Sz_{\mathsf{H},J''})=\Sz_{W,J_{W}}.\]
By Theorem \ref{th:shasp}, $L=\Hom_{\Cl(\mathsf{H})}(\Sz_{\mathsf{H},J''},\Sz_{\mathsf{H},J})$ is non-trivial if and only if $J\sim J''\sim J'$, and in this case $\Sz_{\mathsf{H},J}=\Sz_{\mathsf{H},J''}\otimes L$, hence 
\[ \Sz_W=\Hom_{\Cl(\mathsf{H}')}(\Sz_{\mathsf{H}',J'},\Sz_{\mathsf{H},J})=\Sz_{W,J_{W}}\otimes L.\qedhere\]
\end{proof}
We will need to understand the dependence  on the metric. If $\mathsf{H}$ is finite-dimensional, then any two metrics $g_0,g_1$ on $\mathsf{H}$, with associated maps $g_i^\flat\colon \mathsf{H}\to \mathsf{H}^*$ and $g_i^\sharp=(g_i^\flat)^{-1}$, are related by 
an isometry $A=(g_1^\sharp\circ g_0^\flat)^{1/2}$, in the sense that 
$g_1(Av,Aw)=g_0(v,w)$. In the infinite-dimensional case, we need further conditions to ensure that the square root is defined. Introduce an equivalence relation on Riemannian metrics on $\mathsf{H}$, by declaring that $g_0\sim g_1$ if and only if 
$g_0^\flat\sim g_1^\flat$, i.e., the difference is Hilbert-Schmidt.  
\begin{lemma}\label{lem:sim}
Let $g_0,g_1$ be two Riemannian metrics on $\mathsf{H}$, with $g_0\sim g_1$. Then the operator $C=g_1^\sharp\circ g_0^\flat$ has spectrum contained in the set of positive real numbers, and in particular has a well-defined square root $A=C^{1/2}$. We have that $A\sim I$, and 
\[ g_1(Av,Aw)=g_0(v,w)\]
for all $v,w\in \mathsf{H}$. 
\end{lemma}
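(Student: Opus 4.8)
The plan is to fix $g_1$ as the reference Hilbert inner product on $\mathsf{H}$ (legitimate, since $g_1$ is a strong metric) and to analyze $C=g_1^\sharp\circ g_0^\flat$ through its algebraic and spectral properties relative to $g_1$, then extract $A=C^{1/2}$ by the functional calculus.

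First I would record three elementary facts about $C$. \emph{(i)} Writing $g_0^\flat=g_1^\flat+(g_0^\flat-g_1^\flat)$ gives $C=I+K$ with $K=g_1^\sharp\circ(g_0^\flat-g_1^\flat)$; since $g_0\sim g_1$ means $g_0^\flat-g_1^\flat$ is Hilbert-Schmidt and $g_1^\sharp$ is bounded, $K\in\mathbb{B}_{HS}(\mathsf{H})$, and in particular $C\sim I$. \emph{(ii)} $C$ is invertible, with bounded inverse $g_0^\sharp\circ g_1^\flat$ (both factors are bounded because $g_0,g_1$ are strong). \emph{(iii)} For $v,w\in\mathsf{H}$, unwinding the definitions gives $g_1(Cv,w)=\l g_0^\flat v,w\r=g_0(v,w)$; symmetry of $g_0$ then shows that $C$ is self-adjoint for $g_1$, and positive-definiteness of $g_0$ shows that $g_1(Cv,v)>0$ for $v\neq 0$.

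From \emph{(ii)} and \emph{(iii)} the spectral assertion is immediate: a bounded $g_1$-self-adjoint operator has real spectrum, positivity pushes it into $[0,\infty)$, and invertibility excludes $0$, so $\on{spec}(C)\subset[m,M]$ with $0<m\le M$ (and the spectrum does not depend on the choice of equivalent Hilbert structure). Hence, applying the continuous functional calculus for the $g_1$-self-adjoint operator $C$ to the positive square-root function on $[m,M]$, we obtain a well-defined bounded operator $A:=C^{1/2}$, again $g_1$-self-adjoint and positive-definite, with $A^2=C$; in particular $A+I\ge I$ is invertible with bounded inverse. To get $A\sim I$ I would use the purely algebraic identity
\[
A-I=(A^2-I)(A+I)^{-1}=(C-I)(A+I)^{-1}=K(A+I)^{-1};
\]
since the Hilbert-Schmidt operators form a two-sided ideal in $\mathbb{B}(\mathsf{H})$ and $(A+I)^{-1}$ is bounded, $A-I$ is Hilbert-Schmidt. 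Finally, $g_1$-self-adjointness of $A$ together with \emph{(iii)} gives $g_1(Av,Aw)=g_1(A^2v,w)=g_1(Cv,w)=g_0(v,w)$ for all $v,w\in\mathsf{H}$, the asserted isometry property.

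I do not expect a serious obstacle. The only point demanding care is that ``self-adjoint'' and ``square root'' must be interpreted relative to a fixed Hilbert structure; once we commit to $g_1$, the argument is standard functional calculus together with routine Hilbert-Schmidt ideal bookkeeping.
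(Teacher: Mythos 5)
Your proof is correct, and it diverges from the paper's at the one step that requires real work: locating the spectrum of $C$. The paper does not invoke invertibility of $C$ directly; instead it uses $C\sim I$, so that for $\lambda\neq 1$ the operator $C-\lambda I$ is a compact (Hilbert--Schmidt) perturbation of the invertible operator $(1-\lambda)I$, hence Fredholm of index zero, and then rules out kernel vectors for $\lambda\notin(0,\infty)$ by pairing with $v^*$ via $g_1$; in particular, invertibility of $C$ (equivalently, surjectivity of $g_0^\flat$) comes out as a consequence rather than being assumed. You instead use that both metrics are strong, so $C^{-1}=g_0^\sharp\circ g_1^\flat$ is bounded outright, and then standard spectral theory for the $g_1$-self-adjoint, positive operator $C$ gives $\on{spec}(C)\subset[m,M]\subset(0,\infty)$. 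Both arguments are valid within the paper's conventions (Riemannian metrics there are strong by definition); the paper's Fredholm route is marginally more robust, needing only positive-definiteness of $g_0$ together with $g_0\sim g_1$, while yours is more elementary. On the remaining points you are in fact more complete than the paper: the identity $A-I=(C-I)(A+I)^{-1}=K(A+I)^{-1}$, combined with the two-sided ideal property of $\mathbb{B}_{HS}(\mathsf{H})$, is a clean justification of $A\sim I$, which the paper merely asserts from $C\sim I$; the final isometry computation is the same in both.
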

\begin{proof}

Since $g_1(Cv,w)=g_0(v,w)=g_1(v,Cw)$ for all $v,w\in \mathsf{H}$, the operator $C$ is symmetric with respect to $g_1$. Furthermore, it is non-negative since 
$g_1(Cv,v^*)=g_0(v,v^*)\ge 0$ for all $v\in \mathsf{H}^\C$. To show that $0$ is not in the spectrum, let $\lambda\in \C$, with $\lambda\neq 1$. Since $g_0\sim g_1 \Rightarrow C\sim I$, we have
$C-\lambda I\sim (1-\lambda)I$, an invertible operator. Since Hilbert-Schmidt operators are compact, this shows that $C-\lambda I$ is Fredholm of index zero, and hence is invertible if and only if its kernel is 
zero. But if $v\in \mathsf{H}^\C$ is in the kernel, then $Cv=\lambda v$, hence 
\[ \lambda g_1(v,v^*)=g_1(Cv,v^*)=g_0(v,v^*),\]
thus  $\lambda>0$ or $v=0$. This shows $\on{spec}(C)\subset (0,\infty)$. It follows that 
the square root  $A=C^{1/2}$ is defined. $A$ is again a positive symmetric operator with respect to $g_1$, and since $C\sim I$ we have that $A\sim I$. It satisfies $g_1(Av,Aw)=g_1(Cv,w)=g_0(v,w)$ as required. 
\end{proof}

The  isometry $A\colon (\mathsf{H},g_0)\to (\mathsf{H},g_1)$ for $g_0\sim g_1$ extends to an isomorphism of Clifford algebras 
\[ A\colon \Cl(\mathsf{H},g_0)\to \Cl(\mathsf{H},g_1).\] 
If $J_0$ is a $g_0$-orthogonal complex structure, and $J_1$ is a $g_1$-orthogonal complex structure, then  there exists an isomorphism of spinor modules $\Sz_{\mathsf{H},J_0}\to \Sz_{\mathsf{H},J_1}$ compatible with the isomorphism of Clifford algebras if and only if $J_0\sim J_1$.  Indeed, using $A\sim I$ the condition is equivalent to $J_0\sim A\circ J_1\circ A^{-1}$, hence the 
claim follows from Theorem \ref{th:shasp}. Proposition \ref{prop:subspace} generalizes similarly. 

\subsection{Compatible complex structures in infinite dimensions}\label{subsec:compatiblecs}
Let $\mathsf{H}$ be a separable real Hilbert space, with a (strongly) symplectic 2-form $\omega\colon \mathsf{H}\times \mathsf{H}\to \R$. That is, $\omega$ is non-degenerate in the sense that the associated map $\omega^\flat\colon \mathsf{H}\to \mathsf{H}^*$ is an isomorphism. A complex structure $J$ on $\mathsf{H}$  is \emph{compatible} with $\omega$ if 
\[ g(v,w)=\omega(v,Jw)\]
is a Riemannian metric on $\mathsf{H}$. Such a complex structure $J$ is orthogonal with respect to $g$ and symplectic with respect to 
$\omega$, and the choice of $J$ makes $\mathsf{H}$ into a complex Hilbert space (cf. Section 1.1 in \cite{che:ma}).  We denote by $\ca{J}(\mathsf{H},\omega)$ the set of $\omega$-compatible complex structures.   

\begin{lemma}\label{lem:hsequivalent}
If $J_0,J_1\in\ca{J}(\mathsf{H},\omega)$, with corresponding Riemannian metrics $g_0,g_1$, then $J_0\sim J_1$ if and only if $g_0\sim g_1$. 
\end{lemma}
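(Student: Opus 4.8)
The plan is to relate the two equivalence relations through the single operator that interpolates between $g_0$ and $g_1$, and to exploit the fact that $J_i$ is determined by $g_i$ and $\omega$. First I would write down explicitly how $J_i$ is recovered from the data: since $g_i(v,w)=\omega(v,J_i w)$, we have $g_i^\flat = -\omega^\flat \circ J_i$, equivalently $J_i = -(\omega^\flat)^{-1}\circ g_i^\flat = \omega^\sharp\circ g_i^\flat$. Hence $g_0^\flat - g_1^\flat = -\omega^\flat\circ(J_0 - J_1)$, and since $\omega^\flat$ is a fixed topological isomorphism, the Hilbert--Schmidt ideal is preserved under composition with it and its inverse; therefore $g_0^\flat\sim g_1^\flat$ if and only if $J_0 - J_1$ is Hilbert--Schmidt, i.e. $J_0\sim J_1$. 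This is the entire content of the lemma, modulo being careful about which ``$\sim$'' is meant on which space.

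The one point that requires genuine care is the stability of the Hilbert--Schmidt ideal under pre- and post-composition with bounded operators: if $T\in\mathbb{B}_{HS}(\mathsf{H})$ and $B_1,B_2\in\mathbb{B}(\mathsf{H})$ then $B_2 T B_1\in\mathbb{B}_{HS}(\mathsf{H})$, and similarly $\mathbb{B}_{HS}$ between different Hilbert spaces is a two-sided ideal in the appropriate sense. This is standard, but I would state it as the reason the conclusion is independent of the choice of Hilbert metrics used to define ``Hilbert--Schmidt'' (the excerpt already notes this independence for $\mathbb{B}_{HS}$). Applying it with $B_1 = \operatorname{id}$ and $B_2 = \omega^\sharp$ gives one implication; applying it with $B_2 = \omega^\flat$ (or rather $-\omega^\flat$) gives the converse. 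So the argument is symmetric and there is no residual asymmetry between $g$ and $J$.

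The main (minor) obstacle is purely bookkeeping: the relation ``$g_0\sim g_1$'' was \emph{defined} in the excerpt as $g_0^\flat\sim g_1^\flat$ as operators $\mathsf{H}\to\mathsf{H}^*$, while ``$J_0\sim J_1$'' is the relation \eqref{eq:relation} on $\mathbb{B}(\mathsf{H})$; one must check that these two notions of Hilbert--Schmidt are compatible under the isomorphism $\omega^\flat\colon\mathsf{H}\to\mathsf{H}^*$, which is exactly the ideal property above together with the metric-independence of $\mathbb{B}_{HS}$. Once that is in place the proof is a two-line computation:
\[ g_0^\flat - g_1^\flat \;=\; -\,\omega^\flat\circ(J_0-J_1), \qquad J_0 - J_1 \;=\; -\,(\omega^\flat)^{-1}\circ\big(g_0^\flat - g_1^\flat\big),\]
and each side is Hilbert--Schmidt precisely when the other is. I would also remark that this lemma is what makes the notion of a \emph{polarization} associated to a symplectic form well defined: all the metrics $g_J$ for $J$ in a fixed $\omega$-compatible polarization class lie in a single $\sim$-class, so the spinor module construction of Section~\ref{subsec:spinor} can be applied consistently.
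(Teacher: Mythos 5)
Your proposal is correct and follows essentially the same route as the paper: the paper simply notes that $g_1^\sharp\circ g_0^\flat=J_1^{-1}\circ J_0$, so $J_0\sim J_1$ iff this composition is $\sim I$ iff $g_0^\flat\sim g_1^\flat$, which is the same one-line use of the Hilbert--Schmidt ideal property as your identity $g_0^\flat-g_1^\flat=-\omega^\flat\circ(J_0-J_1)$ composed with the fixed isomorphism $\omega^\flat$. (The only blemish is the harmless sign slip in writing $J_i=-(\omega^\flat)^{-1}\circ g_i^\flat=\omega^\sharp\circ g_i^\flat$, which does not affect the argument.)
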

\begin{proof}
We have $J_0\sim J_1$ if and only if $g_1^\sharp\circ g_0^\flat=J_1^{-1}\circ J_0\sim I$. But 
this means $g_0^\flat\sim g_1^\flat$. 
\end{proof}
Given two $J$'s in $\ca{J}(\mathsf{H},\omega)$, there exists a real-linear automorphism $A$ of  $\mathsf{H}$ intertwining the two resulting Hilbert space inner products. Equivalently, $A$ preserves $\omega$ and intertwines the two $J$'s. This shows that $\ca{J}(\mathsf{H},\omega)$ is a homogeneous space for the group $\on{Sp}(\mathsf{H})$ of symplectic transformations, with stabilizer at $J\in \ca{J}(\mathsf{H},\omega)$ the corresponding unitary group $\on{U}_J(\mathsf{H})$. 

The \emph{restricted symplectic group} $\on{Sp}_{res}(\mathsf{H})$ consists of all $A \in \on{Sp}(\mathsf{H})$ satisfying $AJA^{-1}\sim J$; it is a Banach Lie group for the topology induced by
\[ \|A\|_J=\|A\|+\|[J,A]\|_{HS},\]
where $\|-\|$ is the operator norm, and $\|-\|_{HS}$ is the Hilbert-Schmidt norm with respect to the Riemannian metric defining the Hilbert space structure (\cite{pr:lo}).
By the same argument as above, $\on{Sp}_{res}(\mathsf{H})$, acts transitively on the set $\ca{J}_{res}(\mathsf{H},\omega)$ of compatible complex structures in the given polarization class.  

The spaces $\ca{J}(\mathsf{H},\omega)$ and $\ca{J}_{res}(\mathsf{H},\omega)$ become Banach manifolds, such that the quotient maps
\[ \on{Sp}(\mathsf{H})\rightarrow \on{Sp}(\mathsf{H})/\on{U}_J(\mathsf{H}) \cong \ca{J}(\mathsf{H},\omega), \qquad \on{Sp}_{res}(\mathsf{H})\rightarrow \on{Sp}_{res}(\mathsf{H})/\on{U}_J(\mathsf{H}) \cong \ca{J}_{res}(\mathsf{H},\omega) \]
are submersions.  Similar to finite dimensions, polar decomposition gives a contraction of $\on{Sp}(\mathsf{H})$ (resp. $\on{Sp}_{res}(\mathsf{H})$) onto $\on{U}_J(\mathsf{H})$, showing that $\ca{J}(\mathsf{H},\omega)$ (resp. $\ca{J}_{res}(\mathsf{H},\omega)$) is contractible; we discuss this briefly in Appendix \ref{app:contractible}.  The following gives an explicit path connecting two compatible complex structures defining the same polarization class. 
\begin{theorem}
Let $\omega$ be a symplectic structure on a real Hilbert space $\mathsf{H}$, and let 
$J_0,\ J_1$ be two compatible complex structures, defining the same polarization. 

For all $t\in [0,1]$, the spectrum of $K_t=(1-t)J_0+t J_1$ is contained in the set of non-zero imaginary numbers. The operators 
\[ J_t=K_t\ (-K_t^2)^{-1/2}\] 
are well-defined compatible complex structures connecting $J_1$ and $J_0$ within 
the given polarization class. 
\end{theorem}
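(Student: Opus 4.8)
The plan is to reduce everything to the finite-dimensional picture of a single pair of compatible complex structures, via a simultaneous block-diagonalization argument, and then observe that the borderline eigenvalues contribute nothing. First I would set $C = J_1^{-1} J_0 = g_1^\sharp \circ g_0^\flat$; by Lemma~\ref{lem:hsequivalent} (together with Lemma~\ref{lem:sim}) the hypothesis $J_0\sim J_1$ gives $C\sim I$, so $C$ is a positive symmetric operator (with respect to $g_1$) with spectrum in $(0,\infty)$ and $C - I$ Hilbert--Schmidt. The key structural fact is that $J_0$ and $J_1$ generate a representation of a Lie algebra: since $J_0 J_1 + J_1 J_0$ is symmetric and $[J_0,J_1]$ is skew (with respect to an appropriate metric), one gets a decomposition of $\mathsf{H}$ into $J_0$-, $J_1$-invariant subspaces on which the pair looks like a standard model. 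Concretely, I would use the spectral theorem for the commuting self-adjoint family built from $C$: writing the spectral decomposition of $C$, the eigenvalue $1$ corresponds to the closed subspace where $J_0 = J_1$, and the rest of the spectrum (a discrete set accumulating only at $1$, by Hilbert--Schmidt compactness of $C-I$) comes in pairs of finite-dimensional blocks on which $\{J_0,J_1\}$ acts as on $\R^2\cong\C$ with the two complex structures being rotation by angles differing by some $\theta\in(0,\pi)$.

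On each such $2$-dimensional block, the claim is a direct computation: if $J_0 = \left(\begin{smallmatrix} 0 & -1\\ 1 & 0\end{smallmatrix}\right)$ and $J_1$ is the rotation-conjugate, then $K_t = (1-t)J_0 + tJ_1$ has eigenvalues $\pm i\,\rho(t)$ with $\rho(t)>0$ for all $t\in[0,1]$ (convexity of the imaginary axis intersected with the relevant half-plane; the point is that $K_0=J_0$ and $K_1=J_1$ both have purely imaginary nonzero spectrum and the line segment between two such "admissible" skew operators stays admissible because $\omega(K_t v, v) $ controls the sign), so $-K_t^2$ is positive, $(-K_t^2)^{-1/2}$ is defined, and $J_t = K_t(-K_t^2)^{-1/2}$ is the normalized rotation interpolating $J_0$ and $J_1$ — manifestly a complex structure, manifestly $\omega$-compatible. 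On the subspace where $J_0=J_1$, $K_t\equiv J_0$ and $J_t\equiv J_0$ trivially. Assembling these blocks, $J_t$ is defined on a dense domain and extends boundedly because the block norms are uniformly controlled: $\rho(t)$ is bounded below away from $0$ on each block uniformly in the block (the angles $\theta$ accumulate at $0$, where $\rho(t)\to 1$), and bounded above. The same estimate shows $J_t - J_0$ is Hilbert--Schmidt for every $t$, so the whole path stays in the fixed polarization class; and continuity in $t$ is clear from the functional-calculus formula once we know $-K_t^2$ has spectrum bounded away from $0$ uniformly in $t$.

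The honest way to run this without the ad hoc block decomposition is to argue directly on $\mathsf{H}$: show $-K_t^2 = ((1-t)J_0 + tJ_1)^2 \cdot(-1)$ has spectrum in $[c,\infty)$ for some $c>0$ independent of $t\in[0,1]$. For this I would compute $\omega(K_t v, J_0 w) + \dots$, or more cleanly note $-K_t^2 = (1-t)^2 I + t^2 I - t(1-t)(J_0J_1 + J_1J_0)$ and that $-(J_0J_1+J_1J_0) = C + C^{-1} - \big((\text{cross terms})\big)$... the cleanest identity is $-(J_0 J_1 + J_1 J_0) \ge -2I$ with equality only where $J_0 = -J_1$ (which cannot happen here, as that would force $C = -I$, contradicting $C\sim I$ with positive spectrum); combined with the fact that away from that locus one has a strict bound, this yields $-K_t^2 \ge ((1-t) - t)^2 I$ plus a positive Hilbert--Schmidt perturbation that is bounded below — giving the uniform positivity except possibly near $t = 1/2$, where a separate argument using $J_0\neq -J_1$ closes the gap. \textbf{The main obstacle} I anticipate is exactly this: proving the spectrum of $K_t$ misses $0$ uniformly in $t$, i.e. that $(-K_t^2)^{-1/2}$ is a bounded operator with norm bounded on $[0,1]$ — the pointwise (per-block) statement is elementary, but promoting it to an operator-norm bound valid simultaneously for all $t$, and checking the resulting $J_t$ genuinely satisfies $J_t^2 = -I$ on the nose (not just formally), requires the spectral-decomposition bookkeeping above to be done carefully. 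Once that uniform spectral gap is in hand, $J_t^2 = -I$, $\omega$-compatibility, Hilbert--Schmidt closeness to $J_0$, and continuity all follow formally from the holomorphic functional calculus.
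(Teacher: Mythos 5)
Your argument has a genuine gap at precisely the step you flag as the main obstacle: showing that the spectrum of $K_t$ avoids $0$ (equivalently that $-K_t^2$ has spectrum in $(0,\infty)$) for all $t\in[0,1]$. The inequality you invoke, $-(J_0J_1+J_1J_0)\ge -2I$, is the trivial one and only yields $-K_t^2\ge (1-2t)^2 I$, which degenerates at $t=\tfrac{1}{2}$; the promised ``separate argument using $J_0\ne -J_1$'' is never supplied, and in infinite dimensions a pointwise statement of that kind gives no lower bound anyway. What you need --- and almost wrote down before trailing off into ``cross terms'' (there are none) --- is the exact identity $-(J_0J_1+J_1J_0)=C+C^{-1}$ with $C=J_1^{-1}J_0=g_1^\sharp\circ g_0^\flat$, which is $g_1$-symmetric and \emph{positive}; positivity comes from the compatibility of both $J_i$ with the same $\omega$ (cf. Lemma \ref{lem:sim}), not from $C\sim I$. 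Then $C+C^{-1}\ge 2I$ and $-K_t^2=\big((1-t)^2+t^2\big)I+t(1-t)(C+C^{-1})\ge I$ uniformly in $t$, and your worries about a uniform spectral gap evaporate. Note also that your equality discussion is confused: the minimum $2I$ of $C+C^{-1}$ is attained where $J_0=J_1$ (eigenvalue $1$ of $C$), while $J_0=-J_1$ is excluded by compatibility itself (it would force $g_1(v,v)=-g_0(v,v)<0$), not by the Hilbert--Schmidt condition. Two further soft spots: the block normal form is misstated --- on a $2$-dimensional common invariant block two $\omega$-compatible complex structures differ by a hyperbolic squeeze, corresponding to the eigenvalue pair $\lambda,\lambda^{-1}$ of $C$, not by ``rotations differing by an angle $\theta$'' --- and the claim that $J_t-J_0$ is Hilbert--Schmidt is asserted rather than proven (it requires summing block estimates uniformly in $t$, or a functional-calculus argument).

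For comparison, the paper's proof needs no decomposition at all: for $\lambda\in\on{spec}(K_t)$ with $\lambda\ne\pm\sqrt{-1}$, the operator $K_t-\lambda I$ is a compact (Hilbert--Schmidt) perturbation of the invertible $J_0-\lambda I$, hence non-invertible only if it has a kernel vector $v$, and pairing with $\omega(v^*,\cdot)$ gives $(1-t)g_0(v^*,v)+t\,g_1(v^*,v)=\lambda\,\omega(v^*,v)$, forcing $\lambda$ to be nonzero purely imaginary; the polarization statement then follows from $R_t=I+K_t^2\sim 0$ together with the factorization $(-K_t^2)^{-1/2}-I=R_t f(R_t)$. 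Your $C+C^{-1}$ route, once corrected as above, does give a legitimate alternative proof (indeed $-K_t^2$ is literally a function of $C$, so well-definedness, the identity $J_t^2=-I$, compatibility, and $J_t\sim J_0$ can all be read off from the spectral calculus of the single operator $C$); but as written the argument does not close.
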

\begin{proof}
Suppose $\lambda\in\C $ is in the spectrum of $K_t$, so that  $K_t-\lambda I$ is not invertible. 
Unless $\lambda=\pm \sqrt{-1}$, the operator  $K_t-\lambda I$ 
is a compact perturbation of  an invertible operator $J_0-\lambda I$; hence it is 
non-invertible if and only of its kernel is non-zero. Let  $v\in \mathsf{H}^\C$ be a non-zero element in the kernel. Then 
$\omega(v^*,(K_t-\lambda I)v)=0$, hence  
\[  (1-t)g_0(v^*,v)+t g_1(v^*,v)= \lambda \omega(v^*,v).\]
For $0\le t\le 1$, the left hand side is $>0$, hence so is the right hand side. 
On the other hand, 
since $\omega$ is a real 2-form, $\omega(v^*,v)\in \sqrt{-1}\R$. This shows 
that that the spectrum of $K_t$ lies in the set of non-zero imaginary numbers; hence  $-K_t^2$ has spectrum in strictly positive real numbers. It follows that  $J_t=K_t (-K_t^2)^{-1/2}$ are well-defined complex structures. To show that $J_t\sim J_0$, it suffices 
to show that $(-K_t^2)^{-1/2}\sim I$. But this follows because $R_t:=I+K_t^2\sim 0$, and since 
\[ (-K_t^2)^{-1/2}-I=R_t f(R_t),\]
with $f(x)=\f{1}{x}(\f{1}{\sqrt{1-x}}-1)$, is the product of a Hilbert-Schmidt operator and a bounded operator. 
\end{proof}

The choice of an $\omega$-compatible complex structure on $\mathsf{H}$ determines a spinor module $\Sz_\mathsf{H}$. The discussion above shows that equivalent compatible complex structures $J_0\sim J_1$ are homotopic within their equivalence class, and that they determine a homotopy class of Hilbert-Schmidt equivalent Riemannian metrics. Hence they give isomorphic (in fact, homotopic) spinor modules. 

\subsection{The spin representation of the loop group}\label{subsec:centext}
Let $G$ be a compact, connected Lie group, and $LG$ the space of $G$-valued loops  
of Sobolev class $s$. If $s>\hh$, then $LG$ consists of continuous loops, and is a Banach Lie group under pointwise multiplication,  with Lie algebra $L\g$ the $\g$-valued loops of Sobolev class $s$. We refer to $LG$ for a fixed choice of $s>\hh$ as the loop group of $G$. 
The choice of an invariant inner product on $\g$ defines an $LG$-invariant weak Riemannian metric $g$ on $L\g$, given by the pointwise inner product followed by integration over $S^1$, using the standard volume form $\d  t$ on $S^1$. It extends to a 
(strong) Riemannian metric on the real Hilbert space ${\bf L}\g$ of square integrable loops (i.e., Sobolev class $0$). 
Its complexification has a triangular decomposition 
\[ {\bf L}\g^\C=({\bf L}\g^\C)^+\oplus \g^\C \oplus ({\bf L}\g^\C)^-\]
given as the spans of positive, zero, and negative Fourier modes, respectively. Let $J_0\in \mathbb{B}(\mathbf{L}\g)$ be the operator
whose complexification is equal to $-\sqrt{-1}$ on negative Fourier modes, $0$ on zero Fourier modes, and $+\sqrt{-1}$ on positive Fourier modes. 
\begin{remark}
Equivalently, 
\[ J_0=\partial_0/|\partial_0|,\]
where $\partial_0$ is the unbounded skew-adjoint operator, given as the exterior derivative 
(we use the standard volume form $\partial t$ on $S^1$ to identify 1-forms and functions). 
On the loop $z\mapsto X\otimes z^n$ in $L\g^\C$, for $X\in \g^\C$ and $n\in \Z$, it is given by 
$\partial_0(X \otimes z^n)=2\pi i n X\otimes z^n$ . The kernel of $\partial_0$ are the constant loops, and $|\partial_0|^{-1}$ denotes the inverse of $|\partial_0|$ on $\g^\perp\subset {\bf L\g}$, extended by zero. 
\end{remark}
In order to have an actual complex structure, we consider the larger space ${\bf L}\g\oplus \g$, with the complex structure 
$J$ given as the sum of $J_0$ with the standard complex structure $(X,Y)\mapsto (-Y,X)$ on $\ker(J_0)\oplus \g=\g\oplus \g$.
 Then $J$ determines a $\Z_2$-graded spinor module 
\[ \Cl( {\bf L}\g\oplus \g )\circlearrowright \Sz_{{\bf L}\g\oplus \g}.\]
The operator $J_0$ is not $LG$-invariant, but one can show (see e.g. \cite{pr:lo}) that the polarization class is preserved. 
Hence, the adjoint action defines a group homomorphism from $LG$ into 
the restricted orthogonal group, and so $LG$ inherits a central extension from that of 
$\on{O}_{res}({\bf L}\g\oplus \g)$. We let 
\begin{equation}\label{eq:centext}
1\to \U(1)\to \wh{LG}^{\on{spin}}\to LG\to 1
\end{equation}
be the \emph{opposite} of this central extension of $LG$ defined in this way; 
it will be referred to as the \emph{spin}-central extension of the loop group. 
We refer to the action on the dual spinor module
$\Hom(\Sz_{{\bf L}\g\oplus \g},\C)$ as the \emph{spin representation} of 
the loop group. By definition, the central circle in \eqref{eq:centext} acts with weight one. 
The spin representation of the loop group (or of its Lie algebra) was described in Kac-Peterson \cite{kac:sp}, see also Araki \cite{ara:bo}. 

\begin{remark}
For $G$ simple and simply connected, the isomorphism classes of central extensions of $LG$ are indexed  by their \emph{level} $k\in\Z$, 
corresponding to the $k$-th power of the \emph{basic} central extension $\wh{LG}$. The level of the spin central extension is the dual Coxeter number $\cox$ of $G$. As a $\wh{LG}$-representation, it is a direct sum of irreducible representations of  highest weight $(\rho,\cox)$. See Appendix \ref{app:central}.
\end{remark}
\begin{remark}
One can also consider the action of larger group $LG\times G$ on ${\bf L}\g\oplus \g$; the central extension of $LG$ extends to the product. If $G$ is simply connected, then the 
central extension is trivial over $G$ (with a unique trivialization), hence $\Sz_{\bf{L}\g\oplus \g}$ has an action of $\wh{LG}^{\on{spin}}\times G$. 
\end{remark}

\section{Hamiltonian loop group spaces and quasi-Hamiltonian spaces}\label{sec:ham}
In this section we will recall the basic definitions for Hamiltonian loop group spaces 
and quasi-Hamiltonian $G$-spaces. Improving on the discussion in \cite{al:mom}, we will phrase the 1-1 correspondence between such spaces as a Morita equivalence (see \cite{xu:mom} for related ideas).

\subsection{The space of connections}
Let $G$ be a compact, connected Lie group, with Lie algebra $\g$. For fixed $s\in\R$,  denote by $LG$ the Banach manifold 
of maps $S^1\to G$ of Sobolev class $s$. We will assume $s>\f{1}{2}$, so that $LG$ consists of continuous loops, and is a Banach Lie group under pointwise multiplication. 
Let $\A$ be the space of connections, given as $\g$-valued 1-forms on $S^1$ of Sobolev class $s-1$. 
The loop group $LG$ acts smoothly on the Banach manifold $\A$ by gauge transformations:
\[ \lambda\cdot \mu=\Ad_\lambda(\mu)-\partial \lambda\  \lambda^{-1},\ \ \ \ \ \mu\in\A,\  \lambda\in LG.\]
Here we are reserving the notation $\partial$ for the exterior derivative on $S^1$; thus 
$\partial \lambda\  \lambda^{-1}$ is the pull-back of the right-invariant Maurer-Cartan form
$\theta^R\in \Omega^1(G,\g)$ under the map $\lambda\colon S^1\to G$. 
The generating vector fields $\xi_\A,\ \xi\in L\g$ for the gauge action are the covariant derivatives: At  $\mu\in \A$, we have that $\xi_\A|_\mu=\partial_\mu\xi$ where  
\[ \partial_\mu=\partial+\ad_\mu \colon L\g \to T_\mu \A.\]
\begin{remark}
To avoid the dependence on a Sobolev level, one could also work with Frechet manifolds, and consider smooth loops and connections. See \cite{nee:bo} for foundational results regarding the smooth loop group. 
\end{remark}

\subsection{The path fibration}\label{subsec:pathfib}
Denote by $\ca{P}G$ the Banach manifold of all paths $\gamma\colon \R\to G$ of Sobolev class $s$, with the property that $\gamma(t+1)\gamma(t)^{-1}$ is constant. 
The direct product $LG\times G$ acts on $\ca{P}G$ by 
\[ ((\lambda,g).\gamma)(t)=g\,\gamma(t)\,\lambda(t)^{-1}.\]
The $LG$-action makes $\ca{P}G$ into a $G$-equivariant principal $LG$-bundle over $G$, with quotient map $q(\gamma)=\gamma(1)\gamma(0)^{-1}$, where the $G$-action on the base $G$ is given by conjugation. This is called the \emph{path fibration}. On the other hand, the $G$-action makes  $\ca{P}G$ into an $LG$-equivariant principal $G$-bundle over the space of connections, with quotient map $p\colon \ca{P}G\to \A, \gamma\mapsto \gamma^{-1}\partial \gamma$. 
We arrive at the correspondence diagram 
\begin{equation}\label{eq:corrpg}
 \xymatrix{ & \P G\ar[dl]_p\ar[dr]^q & \\
\mathcal{A} & & G
}\end{equation}
\begin{remarks}\label{rem:4remarks}
\begin{enumerate}
\item
The bundle $p\colon \P G\to \A$ has a distinguished section, with image the space $\P_eG$ of based paths (i.e., paths with $\gamma(0)=e$).  Note however that since this trivialization of the bundle $p\colon \P G\to A$  involves an evaluation map, it does not have good properties for the Sobolev $\hh$ completions to be considered later. 
\item
The holonomy map $\Hol\colon \A\to G$, taking the parallel transport of a connection around $S^1$, 
may be defined as the inclusion 
$\A\to \P G$ followed by $q$. Note however that $\Hol\circ\, p\not=q$. Indeed, 
for a connection $\mu=p(\gamma)$ we have that $\Hol(\mu)=\gamma(0)^{-1}\gamma(1)$ whereas $q(\gamma)=\gamma(1)\gamma(0)^{-1}$. 
\item The pull-back of the bundle $q\colon \P G\to G$ under the exponential map $\exp\colon\g\to G$ is canonically trivial; 
the trivializing section takes $X\in\g$ to the  path $t\mapsto \exp(tX)$. 
\end{enumerate}
\end{remarks}
%
%
%
The correspondence diagram \eqref{eq:corrpg} may be seen as a Morita equivalence of groupoids,  between the action  groupoids $[G/G]$ for the conjugation action and $[\A/LG]$ for the gauge action, with $\P G$ as the equivalence bimodule. In particular, the stabilizer groups for the actions, as well as the `transverse geometry' to orbits, are isomorphic. To be precise, let  $\gamma\in \P G$, mapping to the elements 
$\mu=p(\gamma)\in \A$ and $a=q(\gamma)\in G$. The projections 
from $LG\times G$ to the two factors induce isomorphisms of the stabilizer groups,
\begin{equation}\label{eq:isomorphicgroups}
 (LG)_\mu \longleftarrow (LG\times G)_\gamma \longrightarrow G_a,
 \end{equation}
The resulting isomorphism $G_a\cong LG_\mu$ takes $h\in G_a$ to the loop 
$\lambda(t)=\Ad_{\gamma(t)^{-1}}h$. 
Furthermore, there are slices 
\begin{align*}
V_\gamma\subset \P G &\ \ \  \mbox{ for the $LG\times G$-action at $\gamma$}\\
V_\mu\subset \A &\ \ \ \mbox{  for the $LG$-action at $\mu$}\\
V_a\subset G &\ \ \ \mbox{ for the $G$-action  at $a$}
\end{align*}
invariant under the respective groups \eqref{eq:isomorphicgroups}, 
in such way that $p$ and $q$ restrict to diffeomorphisms 
\begin{equation}\label{eq:isomorphicslices}
 V_\mu \longleftarrow V_\gamma \longrightarrow V_a,
 \end{equation}
equivariant with respect to \eqref{eq:isomorphicgroups}. 
%
Letting $U_a\subset \g_a$ be a sufficiently small $\Ad(G_a)$-invariant open neighborhood of the origin, we may take 
\[ V_\mu=\{\mu+\Ad_{\gamma^{-1}}(X\,\partial t)|\ X\in U_a\},\ \ \ 
V_\gamma=\{\exp(tX)\gamma(t)|\ X\in U_a\},\ \ \ 
V_a=\{\exp(X)a|\ X\in U_a\}.\]
The slice $V_\mu$ can also be written as 
\[ V_\mu
=\{\mu+\zeta\partial t|\ \zeta\in U_\mu\} \]
where $U_\mu\subset L\g_\mu$ is the image of $U_a$ under the isomorphism $L\g_\mu\cong \g_a$. We will often use these slices in conjunction with partitions of unity: Since $G$ is compact, it is covered by finitely many 
of the flow-outs $G. V_a$ of slices, and we may choose an associated 
$G$-invariant partition of unity. Using the diagram \eqref{eq:corrpg}, this determines finite open covers of $\P G$ and of $\A$, along with invariant partitions of unity.  

Suppose now that the Lie algebra $\g$ comes with an $\Ad$-invariant metric $\cdot$.  Let $\theta^L,\theta^R\in\Omega^1(G,\g)$ denote the Maurer-Cartan-forms. The metric on $\g$ determines a bi-invariant \emph{Cartan 3-form} $\eta=\f{1}{12}\theta^L\cdot[\theta^L,\theta^L]\in \Omega^3(G)$, and its $G$-equivariant extension $\eta_G(X)=\eta-\hh(\theta^L+\theta^R)\cdot X$ for $X\in\g$.  The 3-form $\eta$ is closed, and $\eta_G$ is equivariantly closed: 
\[ (\d-\iota(X_G))\eta_G(X)=0.\]
The metric also determines a certain $LG\times G$-invariant 2-form $\varpi\in \Omega^2(\P G)$, with the properties 
\[ \d\varpi=-q^*\eta,\ \ \ \ \iota(\xi_{\P G})\varpi=p^*\d\l\mu,\xi\r,\ \ \ \ \ \iota(X_{\P G})\varpi=-q^*\Big(\hh(\theta^L+\theta^R)\cdot X\Big)
\]
for $\xi\in L\g$ and $X\in\g$. Letting $\ev_t\colon \PG\to G$ be the evaluation map, this 2-form is given by (cf.~ Appendix \ref{app:2form})
\[ \varpi=\hh \int_0^1 \Big(\on{ev}_t^*\theta^R\cdot \f{\p}{\p t}  \on{ev}_t^*\theta^R\Big)\, \partial t\ 
+ \hh \on{ev}_0^*\theta^L\cdot \on{ev}_1^*\theta^L.
\]
See \cite{al:ati,voz:lo} for a conceptual construction of this 2-form. 
\begin{remark}
These formulas have a  Dirac-geometric significance: The group $G$ carries the so-called 
\emph{Cartan-Dirac structure}, and the total space of the action groupoid $[G/G]$ is a quasi-presymplectic groupoid integrating this Dirac structure (see \cite{bur:int,xu:mom}) . On the other hand, $\A$ carries an infinite-dimensional `Lie-Poisson' Dirac structure \cite{cab:dir}, and  $[\A/LG]$, with a certain 2-form, is its integration.  The correspondence diagram \eqref{eq:corrpg} for the path fibration $\P G$, together with the 2-form $\varpi$, makes it a \emph{dual pair} of Dirac manifolds, in the sense of \cite{fre:rea}, and also defines a Morita equivalence of pre-symplectic groupoids, see \cite[Proposition 4.26]{xu:mom}.
\end{remark}

\subsection{Hamiltonian loop group spaces}
The  invariant inner product $\cdot$ on $\g$ defines a pairing between 
$\A$ and $L\g$, given by pointwise inner product followed by integration over $S^1$. 
\begin{definition}\cite{me:lo}\label{def:melo}
A \emph{Hamiltonian $LG$-space} $(\M,\omega,\Phi)$ is a Banach manifold $\M$, equipped with a smooth action of $LG$, an invariant weakly symplectic 2-form $\omega$, and a smooth $LG$-equivariant map $\Phi\colon \M\to \A$ satisfying the \emph{moment map condition}
\[ \iota(\xi_\M)\omega=-\l\d\Phi,\xi\r,\ \ \ \xi\in L\g.\]
It is called \emph{proper} if the moment map $\Phi$ is proper.
\end{definition}
The first examples of Hamiltonian $LG$-spaces are the coadjoint orbits $\O=LG\cdot\mu\subset \A$, with the 2-form given on generating vector fields by 
\[ \omega((\xi_1)_\O,\ (\xi_2)_\O)|_\mu=\int_{S^1} \partial_\mu \xi_1\cdot\xi_2,\ \ \ \ \  \xi_1,\xi_2\in L\g;\]
the moment map is the inclusion. Another natural example is the moduli space of flat connections on surfaces with boundary. There are many other examples; some of these are best understood from the correspondence with quasi-Hamiltonian spaces, which we discuss next.

\subsection{Quasi-Hamiltonian spaces}

\begin{definition}\label{def:quas}
A \emph{quasi-Hamiltonian $G$-space} $(M,\omega_M,\Phi_M)$ is a (finite-dimensional) $G$-manifold $M$ with an invariant 2-form $\omega_M$ and a $G$-equivariant smooth map 
$\Phi_M\colon M\to G$ satisfying the following conditions: 
\begin{enumerate}
\item $\d\omega_M=-\Phi_M^*\eta$, 
\item $\iota(X_M)\omega_M=-\Phi_M^*(\hh(\theta^L+\theta^R)\cdot X)$ for all $X\in\g$ 
\item $\ker(\omega_M)\cap \ker(T_M\Phi)=0$. 
\end{enumerate}
\end{definition}
The first two conditions may be combined into a single requirement 
\[ (\d-\iota(X_M))\omega_M=-\Phi_M^*\eta_G(X),\]
stating that $\omega_M$ is an equivariant primitive of $-\Phi_M^*\eta_G$. 

Basic examples are the conjugacy classes $\Co \subset G$, with moment map the inclusion, and the double $D(G)=G\times G$, with moment map the group commutator. Other examples include $\SU(n)$ acting on an even-dimensional sphere $S^{2n}$ \cite{hu:imp}, and $\on{Sp}(n)$  acting on quaternionic projective spaces \cite{esh:ex} or on  
quaternionic Grassmannians \cite{kno:mul}. A full classification of  such \emph{multiplicity-free} quasi-Hamiltonian spaces was obtained in the work of Knop \cite{kno:mul}, with many new examples.  

There is a correspondence between proper Hamiltonian $LG$-spaces $\M$ and quasi-Hamiltonian $G$-spaces $M$, described by a diagram 
\begin{equation}\label{eq:corrn} 
\xymatrix{ & \ca{N}\ar[dl]_p\ar[dr]^q & \\
\M & & M
}\end{equation}
Given $\M$, the space $\N$ is the pullback of  the bundle $p\colon \P G\to \A$ under the moment map $\Phi$, and $M=\N/LG$. Conversely, given $M$, the space  $\N$ is the pullback of  the bundle $q\colon \P G\to G$ under the moment map $\Phi_M$, and $\M=\N/G$. 
The 2-forms are related by 
\[ q^*\omega_M-p^*\omega=\Phi_{\N}^*\varpi.\]
The correspondence may be used to define examples of Hamiltonian loop group spaces by construction of the corresponding quasi-Hamiltonian spaces.  For a coadjoint orbit $\O\subset \A$, the associated quasi-Hamiltonian spaces is a conjugacy class $\Co\subset G$. 
\

\section{The spinor bundle for a Hamiltonian $LG$-space}\label{sec:spin}
Our construction of a spinor bundle starts out by showing that the weakly symplectic 2-form of any proper Hamiltonian loop group space becomes \emph{strongly} symplectic on a suitable  completion of the tangent bundle. The construction is particularly nice for the case of coadjoint orbits; hence we will discuss this case first.

\subsection{The spinor bundle over coadjoint loop group orbits}
Let $\O\subset \A$ be a coadjoint orbit for the loop group. The weak symplectic form on $\O$ is 
given on tangent spaces $T_\mu\O=L\g/L\g_\mu$ by 
 \begin{equation} \label{eq:coadsympl}
 \omega((\xi_1)_\O,(\xi_2)_\O)|_\mu=\int_{S^1} \partial_\mu\xi_1\cdot \xi_2,\ \ \xi_i\in L\g.\end{equation}
The topological dual space of $L\g$ (consisting of $\g$-valued loops of Sobolev class $s$) are  $\g$-valued 1-forms of Sobolev class $-s$, hence the dual space of $T_\mu\O=L\g/L\g_\mu$ is the annihilator 
of $L\g_\mu$ inside the space of 1-forms of Sobolev class $-s$. But 
\[ (\omega_\mu)^\flat\colon T_\mu\O\to (T_\mu\O)^*,\ \ \xi_\O|_\mu \mapsto \partial_\mu\xi,\] 
takes values in the subspace of 1-forms of Sobolev class $s-1$. Since $s>\f{1}{2}$, we have that $s-1
>-s$. This verifies that $(\omega_\mu)^\flat$ is \emph{not} onto. On the other hand, letting 
\[ \ol{L}\g:=L_{(\f{1}{2})}\g\]
be the loops of Sobolev class $\f{1}{2}$, we see that $\omega_\mu$ extends to a \emph{strongly} 
symplectic bilinear form on $\ol{T}_\mu\O=\ol{L}\g/L\g_\mu=\ol{L}\g_\mu^\perp$:
\begin{equation}\label{eq:ommu}
\omega_\mu\colon \ol{L}\g_\mu^\perp\times \ol{L}\g_\mu^\perp\to \R
\end{equation}
Here  $\perp$ stands for the orthogonal space relative to the (weak) metric on $\ol{L}\g$. Hence, the bundle $\ol{T}\O$ becomes a strongly symplectic vector bundle over $\O$. There is a canonical $LG$-invariant $\omega$-compatible complex structure on $\ol{T}\O$, given at $\mu$ by the $LG_\mu$-invariant complex structure
\begin{equation}\label{eq:jmu} 
J_\mu=\partial_\mu/|\partial_\mu| \in \mathbb{B}(\ol{L}\g_\mu^\perp) \end{equation}
The resulting Riemannian metric on $\ol{T}\O$ defines a Clifford bundle 
$\Cl(\ol{T}\O)$, and the complex structure gives an $LG$-equivariant spinor module 
\[ \Cl(\ol{T}\O)\circlearrowright \Sz_{\ol{T}\O}.\]
\begin{remark}
The bracket on $L\g$ does not extend continuously to $\ol{L}\g$; hence $\ol{L}\g$ 
does not become a Banach Lie algebra. Similarly the $G$-valued loops of Sobolev class $\f{1}{2}$ are not a Banach Lie group.
\end{remark}

\subsection{The spinor bundle for proper Hamiltonian $LG$-spaces}
We will now extend this construction to arbitrary proper Hamiltonian $LG$-spaces. For any Banach manifold $Q$ with a proper $LG$-action, we let
\[ \ol{T}Q:=(TQ\times \ol{L}\g)/\sim\]
be the quotient space under the relation 
\[ (v+\xi_Q|_q,0)\sim (v,\xi),\ \ \ v\in T_qQ,\ \ \xi\in L\g\subset \ol{L}\g,\] 
where $\xi_Q$ denotes the vector field on $Q$ generated by $\xi\in Lg$. 
The properness assumption implies that the action has finite-dimensional slices with compact
stabilizer groups $LG_q$, and the definition amounts to replacing the orbit 
directions $T_q(LG\cdot q)=L\g/(L\g)_q$ with $\ol{L}\g/(L\g)_q$. In particular, $\ol{T}\A$ is defined, and for every proper Hamiltonian $LG$-space $\M$ the bundle 
$\ol{T}\M$ is defined
(since properness of the moment map implies properness of the action) . 
Equivariant maps of Banach manifolds $Q_1\to Q_2$ with proper $LG$-actions  define equivariant bundle maps $\ol{T} Q_1\to \ol{T} Q_2$.

\begin{theorem}\label{th:completedtangentbundle}
Let $(\M,\omega,\Phi)$ be a proper Hamiltonian $LG$-space. Then 
\begin{enumerate}
\item the 2-form $\omega$ extends to a strongly symplectic form on $\ol{T}\M$,
\item the bundle $\ol{T}\M$ has a distinguished $LG$-invariant polarization, defining 
$\ca{J}_{\on{res}}(\ol{T}\M,\omega)$, 
\item there exists a global $LG$-invariant compatible complex structure $J\in \ca{J}_{\on{res}}(\ol{T}\M,\omega)$, within this polarization class. 
\end{enumerate}
\end{theorem}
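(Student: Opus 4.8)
The plan is to reduce the general case to the case of coadjoint orbits by means of the slice theorem, and then to glue local data using an invariant partition of unity. First I would treat part (a). Fix a point $m\in\M$ with image $\mu=\Phi(m)\in\A$, and let $L\g_\mu$ be the (compact) stabilizer Lie algebra. Since $\Phi$ is equivariant, the differential $d_m\Phi\colon T_m\M\to T_\mu\A$ carries the orbit tangent space $L\g/L\g_m$ onto $L\g/L\g_\mu=T_\mu\O$, and the moment map condition $\iota(\xi_\M)\omega=-\langle d\Phi,\xi\rangle$ identifies the radical of $\omega|_{T_m\M}$ restricted to orbit directions with the kernel of $\omega_\mu^\flat$ on $T_\mu\O$. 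The properness assumption gives a finite-dimensional symplectic slice $S_m$ transverse to the orbit, on which $\omega$ is already strongly nondegenerate. The completion $\ol T_m\M$ replaces the orbit part $L\g/L\g_m$ by $\ol L\g/L\g_m=\ol L\g_\mu^\perp$; on this piece $\omega$ is exactly the strongly symplectic pairing \eqref{eq:ommu} pulled back along $d_m\Phi$, by the same computation as for coadjoint orbits. Hence on $\ol T_m\M=(L\g/L\g_m\text{ completed})\oplus S_m$ the form $\omega$ is block-diagonal with both blocks strongly symplectic, so $\omega^\flat\colon\ol T_m\M\to(\ol T_m\M)^*$ is an isomorphism; this is part (a).

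For part (b), on each such slice chart the orbit directions receive the complex structure $J_\mu=\partial_\mu/|\partial_\mu|$ of \eqref{eq:jmu}, and the finite-dimensional slice $S_m$ admits some $\omega$-compatible complex structure (the space of these being nonempty and contractible in finite dimensions). Using the equivariant slices \eqref{eq:isomorphicslices} of the path fibration transported along $\Phi$, the orbit-direction complex structures on overlapping charts agree with $J_\mu$, and I would check that the transition maps lie in the restricted orthogonal group: the relevant statement is precisely the assertion (quoted from \cite{pr:lo}) that the adjoint $LG$-action on $\ol L\g$ preserves the polarization class of $J_0$, combined with the fact that the finite-dimensional slice contributes only a finite-rank — hence Hilbert--Schmidt — perturbation. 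This produces a well-defined $LG$-invariant polarization class on $\ol T\M$, i.e. the bundle $\ca J_{\on{res}}(\ol T\M,\omega)$ of part (b).

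For part (c) I would construct a global compatible $J$ in this class by patching. Choose a finite $G$-invariant (equivalently, via \eqref{eq:corrpg}--\eqref{eq:corrn}, $LG$-invariant) cover of $\M$ by flow-outs of slices, with a subordinate invariant partition of unity $\{\chi_i\}$. On each chart we have a local compatible $J_i\in\ca J_{\on{res}}$, with all the $J_i$ in the same polarization class. Averaging complex structures is not linear, but one can instead average the associated compatible metrics $g_i(v,w)=\omega(v,J_iw)$: the convex combination $g=\sum_i\chi_i g_i$ is again a strong Riemannian metric, equivalent (Hilbert--Schmidt-close) to each $g_i$ by Lemma \ref{lem:sim} and Lemma \ref{lem:hsequivalent}, and the usual formula $J=-\omega^\sharp\circ g^\flat\cdot(\ldots)^{-1/2}$ (polar decomposition of $\omega^\sharp g^\flat$, which by Lemma \ref{lem:sim} has spectrum on the positive imaginary axis after the appropriate normalization) yields a global $\omega$-compatible $J$ lying in the given polarization class. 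Invariance is automatic from invariance of the $\chi_i$ and the $g_i$. Uniqueness up to homotopy then follows from the theorem on compatible complex structures in infinite dimensions: any two such $J$'s are Hilbert--Schmidt-equivalent, hence joined by the explicit path $J_t=K_t(-K_t^2)^{-1/2}$ within $\ca J_{\on{res}}(\ol T\M,\omega)$.

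The main obstacle I expect is part (b): verifying that the orbit-direction complex structures on overlapping slice charts differ by transformations in $\on{O}_{res}$, i.e. that the polarization is genuinely globally well-defined and $LG$-invariant. This requires care because $\ol L\g$ is not a Banach Lie algebra (the bracket does not extend), so one cannot simply invoke smoothness of the $LG$-action on $\ol T\M$; instead one must work with the completed slices and the explicit form of the transition functions coming from \eqref{eq:isomorphicgroups}--\eqref{eq:isomorphicslices}, and show the Hilbert--Schmidt estimate survives. The finite-dimensionality of the slice and the compactness of the stabilizers $LG_\mu$ are what make this tractable.
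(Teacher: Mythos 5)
Your overall strategy is the same as the paper's: decompose $\ol T\M$ near an orbit into completed orbit directions carrying $J_\mu=\partial_\mu/|\partial_\mu|$ plus a finite-dimensional symplectic piece, observe that different local choices differ by finite-rank (hence Hilbert--Schmidt) operators, and patch the associated metrics with a finite invariant partition of unity, recovering a global compatible $J$ in the distinguished polarization class. The paper does exactly this, and also handles invariance and the comparison across charts by the finite-rank remark, so parts (b) and (c) of your sketch are essentially the published argument (your appeal to the Pressley--Segal statement about $\mathrm{Ad}_\lambda J_0\mathrm{Ad}_\lambda^{-1}\sim J_0$ is not even needed: the family $\{J_\mu\}$ is strictly $LG$-equivariant, since $\partial_{\lambda\cdot\mu}=\mathrm{Ad}_\lambda\,\partial_\mu\,\mathrm{Ad}_\lambda^{-1}$).

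There is, however, a genuine flaw in your part (a) as written. You split $\ol T_m\M=(\text{completion of }L\g/L\g_m)\oplus S_m$ with $S_m$ a slice transverse to the orbit, and claim $\omega$ is block-diagonal with both blocks strongly symplectic. This fails whenever $L\g_m\subsetneq L\g_\mu$, $\mu=\Phi(m)$: for $\xi\in L\g_\mu\setminus L\g_m$ one has $\xi_\M|_m\neq 0$ but $\omega(\xi_\M,\zeta_\M)|_m=\int_{S^1}\partial_\mu\xi\cdot\zeta=0$ for all $\zeta\in L\g$, so the restriction of $\omega$ to the full orbit tangent space is degenerate, and consequently no $\omega$-orthogonal complement to the orbit directions exists; such points occur generically (e.g.\ $L\g_m$ trivial while $\mu$ is non-regular). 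The correct bookkeeping is the one supplied by the symplectic cross-section theorem, which you do not invoke: over the cross-section $Y=\Phi^{-1}(V_\mu)$ one has the $\omega$-orthogonal, $LG_\mu$-equivariant splitting $T\M|_Y=TY\oplus(Y\times L\g_\mu^\perp)$, in which the \emph{finite-dimensional symplectic} block $TY$ absorbs the residual orbit directions $L\g_\mu/L\g_m$, and only the $L\g_\mu^\perp$ directions are completed to $\ol L\g_\mu^\perp$ (where the form $\int\partial_\nu\xi_1\cdot\xi_2$, $\nu=\Phi(y)$, is strongly symplectic). With that replacement, which also settles the smoothness of the extended form as a bundle statement rather than a pointwise one, your argument goes through and coincides with the paper's proof.
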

\begin{proof}
For every $m\in \M$, with image $\mu=\Phi(m)$, we have a subspace of finite codimension
\[ \ol{L}\g_\mu^\perp\subset \ol{T}_m \M\] 
embedded via the action map. On this subspace, we have the standard complex structure $J_\mu$ defined in \eqref{eq:jmu}. This determines a polarization $\ca{J}_{\on{res}}(\ol{T}_m\M)$, consisting of those complex structures on $\ol{T}_m\M$ that differ from $J_\mu$ by a Hilbert-Schmidt operator.

We will use the \emph{symplectic cross-section theorem} for Hamiltonian loop group actions
\cite[Section 4.2]{me:lo}.  Given $m\in \M$ with image $\mu=\Phi(m)$, let $V_\mu\subset \A$ be the  slice through $\mu$ as described in Section \ref{subsec:pathfib}. Its pre-image $Y=\Phi^{-1}(V_\mu)$ is a finite-dimensional  $LG_\mu$-invariant symplectic submanifold of $\M$. We have the $LG_\mu$-equivariant $\omega$-orthogonal decomposition
\begin{equation}\label{eq:tmdecomp}
T\M|_Y=TY\oplus (Y\times L\g_\mu^\perp),\end{equation}
where the second summand is embedded by the generating vector fields. The weak symplectic structure on $\{y\}\times L\g_\mu^\perp$ for $y\in Y$ is determined by the moment map condition, and is given by 
\[
 \omega((\xi_1)_\M,(\xi_2)_\M)|_y=\int_{S^1} \partial_{\nu}\xi_1\cdot \xi_2,\ \ \xi_i\in L\g_\mu^\perp
\]
where $\nu=\Phi(y)$ (cf.~ \eqref{eq:coadsympl},\eqref{eq:ommu}). This 2-form extends to a strongly symplectic form on 
$\{y\}\times \ol{L}\g_\mu^\perp$, and so $\omega|_Y$ extends to an $LG_\mu$-invariant strongly symplectic 2-form 
on 
\begin{equation}\label{eq:tmdecomp1}
\ol{T}\M|_Y\cong TY\oplus (Y\times \ol{L}\g_\mu^\perp),\end{equation}
and by equivariance to an $LG$-invariant symplectic form on $\ol{T}\M$.

The summand $Y\times \ol{L}\g_\mu^\perp$ in \eqref{eq:tmdecomp1} has an $LG_\mu$-invariant compatible complex structure, 
given by \eqref{eq:jmu}. By choosing an $LG_\mu$-invariant
compatible complex structure on $TY$, we obtain an $LG_\mu$-invariant compatible complex structure on $\ol{T}\M|_Y$, hence an $LG$-invariant compatible complex structure $J_{LG.Y}$ on $\ol{T}\M|_{LG.  Y}$. Given another symplectic cross-section $Y'$, the difference between $J_{LG.Y},\ J_{LG.Y'}$ over $LG.Y\cap LG.Y'$ has finite rank; hence the corresponding Riemannian metrics agree on a subbundle 
of finite codimension. Using a (finite) partition of unity as in Section \ref{subsec:pathfib}, these local 
definitions  may be patched together, and we obtain an $LG$-invariant compatible Riemannian metric and associated complex structure $J$ on all of $\ol{T}\M$. By construction, this complex structure $J$ 
on $\ol{T}_m\M$, at any given $m\in \M$, with image $\mu=\Phi(m)$, differs from the complex structure $J_\mu$ on $\ol{L}\g_\mu^\perp\subset \ol{T}_m\M$  by a finite rank operator. In particular, it is independent of the choices modulo Hilbert-Schmidt equivalence. 
\end{proof}

Let $g$ be the Riemannian metric on $\ol{T}\M$ defined by $\omega$ and $J$. We obtain a Clifford bundle with an $LG$-equivariant spinor module, 
\begin{equation}\label{eq:thespinormodule}
 \Cl(\ol{T}\M)\circlearrowright \Sz_{\ol{T}\M}.\end{equation}
As explained in Section \ref{subsec:compatiblecs}, two choices $J_0, J_1$ of compatible complex structures in the given polarization class give rise to equivalent Riemannian metrics $g_0\sim g_1$, hence the Clifford algebras are canonically identified. A homotopy between $J_0,\,J_1$ within the polarization class
gives a homotopy of spinor modules.

\begin{remark}
On a (finite-dimensional) K\"ahler manifold,  one takes the spinor module for the Dolbeault operator as $\Sz_{TM}:=\wedge (T^*M)^{0,1}$. This is consistent with our conventions, since the isomorphism $g^\flat \colon TM\cong T^*M$ given by the metric 
identifies $(TM)^+=(TM)^{(1,0)}\cong (T^*M)^{(0,1)}$. 
\end{remark}

\section{The twisted spin-c -structure for a quasi-Hamiltonian $G$-space}\label{sec:twist}

Having obtained the spinor module $\Sz_{\ol{T}\M}$ over the Hamiltonian $LG$-space $\M$, we will use it to construct the twisted $\Spin_c$-structure for the associated quasi-Hamiltonian $G$-space $M$. Our strategy is to use the correspondence diagram \eqref{eq:corrn} to `pull back' the spinor bundle over $\M$ under the projection $p\colon \N\to \M$, and then `descend' to $M$ under the map $q\colon \N\to M$. This will require the use of principal connections, obtained as pullbacks of principal connections from the path fibration. As we will see, the appearance of a central extension of the loop group will prevent us from actually descending the spinor module to $M$; this is the reason why we will not obtain an actual $\Spin_c$-structure on $M$, in general, but only a twisted $\Spin_c$-structure.  We will begin our discussion with the case that $\M$ is a coadjoint orbit, with $M$ the associated conjugacy class.

\subsection{The twisted $\Spin_c$-structure for conjugacy classes}
\label{subsec:orbits}
Let $\N\subset \P G$ be an orbit for the $LG\times G$-action, and $\O=p(\N),\ \Co=q(\N)$ the corresponding coadjoint $LG$-orbit and $G$-conjugacy class:
\begin{equation}\label{eq:oorrn} 
\xymatrix{ & \ca{N}\ar[dl]_p\ar[dr]^q & \\
\O & & \Co
}\end{equation}
\begin{proposition}[Connections over orbits] 
\label{prop:orbits}
The bundles
$p\colon \N\to  \O,\ q\colon \N\to \Co$ have distinguished invariant connections 
\[ \alpha\in \Omega^1(\N,\g)^{LG\times G},\ \ \beta\in \Omega^1(\N,\g)^{LG\times G},\]
with continuous extensions to the completions, $\ol{\alpha}\colon \ol{T}\N\to \g,\ \ 
\ol{\beta}\colon \ol{T}\N\to \ol{L}\g$.  
\end{proposition}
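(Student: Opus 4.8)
The starting point is the correspondence diagram \eqref{eq:oorrn}, in which $\N \subset \PG$ is an $LG \times G$-orbit sitting inside the path fibration, with $p$ its quotient by $G$ and $q$ its quotient by $LG$. So I first want to build distinguished $LG \times G$-invariant principal connections on the two bundles $p \colon \PG \to \A$ (principal $G$-bundle) and $q \colon \PG \to G$ (principal $LG$-bundle), and then simply restrict them to $\N$. For $q \colon \PG \to G$ the natural candidate is the connection 1-form $\beta \in \Omega^1(\PG, L\g)$ whose value on a tangent vector $\dot\gamma$ at $\gamma$ is built from $\ev_t^*\theta^L$ or $\ev_t^*\theta^R$ pulled back along the path — concretely, $\beta_\gamma(\dot\gamma)(t) = \Ad_{\gamma(t)^{-1}}\bigl(\dot\gamma(t)\gamma(t)^{-1}\bigr) - (\text{term making it vanish on the }G\text{-part})$, normalized so that it reproduces the generator $\xi \in L\g$ of the vertical $LG$-action and is annihilated by the generators of the horizontal $G$-action. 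For $p \colon \PG \to \A$, recall $p(\gamma) = \gamma^{-1}\partial\gamma = \ev^*\theta^L$ on $S^1$; the fibre is the $G$-orbit $g \mapsto g\gamma$, so a $G$-connection $\alpha \in \Omega^1(\PG,\g)$ is specified by a $G$-equivariant horizontal complement to these orbit directions — the obvious choice is $\alpha_\gamma(\dot\gamma) = \dot\gamma(0)\gamma(0)^{-1}$ (or the analogous expression using $\ev_0^L$), which is visibly $G$-equivariant, vanishes on the $LG$-vertical directions (based-path directions), and restricts to the identity on the $G$-generators. Invariance under the \emph{other} group in each case follows because the two actions commute and the defining formulas are built from Maurer–Cartan forms. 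I would verify both the reproducing property and horizontality by a direct computation using the formula $((\lambda,g)\cdot\gamma)(t) = g\gamma(t)\lambda(t)^{-1}$ and the standard transformation laws of $\theta^L, \theta^R$ under left/right translation.

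The second and more delicate point is the \emph{continuous extension to the completions} $\ol{\alpha}\colon \ol T\N \to \g$ and $\ol{\beta}\colon \ol T\N \to \ol L\g$. Here $\ol T\N$ is the completion defined just before Theorem \ref{th:completedtangentbundle}: one adjoins the Sobolev $\tfrac12$ completion of the orbit directions. The connection $\beta$, as a map $T\N \to L\g$, must be shown to extend continuously when the orbit-direction part of $T\N$ is completed to Sobolev $\tfrac12$ and the target $L\g$ is correspondingly replaced by $\ol L\g = L_{(1/2)}\g$. Since $\beta$ is (up to the $\Ad_{\gamma(\cdot)^{-1}}$ twist, which is a bounded operator on $L_{(1/2)}\g$ because $\gamma$ is continuous of Sobolev class $s > \tfrac12$ hence the conjugation is Sobolev-$\tfrac12$ bounded — this is essentially the same estimate that makes $LG$ act on $\ol L\g$ in the relevant way) just a reshuffling that on the vertical part is the tautological identification $L\g \cong$ orbit directions, the extension is forced and continuous by construction: $\beta$ restricted to the vertical subbundle is the inverse of the action map, which by definition of $\ol T$ extends to an isomorphism onto $\ol L\g$. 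The $\alpha$ case is easier: $\alpha$ takes values in the \emph{finite-dimensional} space $\g$, and its formula $\dot\gamma \mapsto \dot\gamma(0)\gamma(0)^{-1}$ involves only evaluation at a point — but Remark \ref{rem:4remarks} warns precisely that evaluation maps behave badly for the Sobolev-$\tfrac12$ completion. So for $\alpha$ I would instead use a formula that does not involve pointwise evaluation: since $\alpha$ is a $G$-connection and the relevant completion only enlarges the $LG$-orbit directions (on which $\alpha$ must vanish) and not the $G$-orbit or transverse directions, $\alpha$ automatically extends by continuity provided it is continuous on the transverse directions and on the completed orbit directions it is simply zero. Concretely: $\ol T\N = \ol T\PG|_\N$ decomposes (locally, via the slices $V_\gamma$ of Section \ref{subsec:pathfib}) as (transverse) $\oplus\ (G\text{-orbit directions }\g)\ \oplus\ (\ol L\g\text{-orbit directions})$, and $\alpha$ is the projection to the $\g$-summand along the other two — this projection is manifestly continuous for the completed topology, giving $\ol\alpha$.

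The main obstacle I anticipate is making the extension statement for $\beta$ rigorous, specifically checking that the $\Ad_{\gamma(\cdot)^{-1}}$-type conjugation appearing in the formula for $\beta$ is a bounded operator on the Sobolev-$\tfrac12$ completion $\ol L\g$ — this is exactly the borderline Sobolev index where multiplication/conjugation estimates are subtle, and where the loop group structure degenerates (Remark after the coadjoint-orbit discussion). The resolution is that one only needs conjugation by a \emph{fixed} continuous loop $\gamma$ (of class $s > \tfrac12$) to be bounded on $L_{(1/2)}\g$, not for the loops themselves to form a group; and $\gamma \in LG$ acting by $\Ad_\gamma$ on $\ol L\g$ is bounded because $\Ad_\gamma$ differs from the identity by a zeroth-order operator with coefficients in the continuous loops, so a standard commutator/interpolation estimate (of the type $\|[\,|\partial_0|^{1/2}, M_f\,]\|$ bounded for $f$ Lipschitz, or the Kato–Ponce inequality at index $\tfrac12$) does the job. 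I would cite or adapt the corresponding estimate from \cite{pr:lo} or \cite{me:lo}. Once boundedness of these conjugations is in hand, both $\ol\alpha$ and $\ol\beta$ are defined by the obvious continuous extension, invariance is inherited from the uncompleted case by density of $T\N$ in $\ol T\N$, and the proposition follows.
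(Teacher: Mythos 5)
There is a genuine gap, and it sits exactly at the point your construction takes for granted: that the $G$-orbit directions and the $LG$-orbit directions in $T_\gamma\N$ are independent, so that each connection can be specified by reproducing one set of generators while vanishing on the other (your ``normalized so that it reproduces $\xi$ and is annihilated by the $G$-generators'' for $\beta$, and your local decomposition ``(transverse) $\oplus$ ($G$-orbit directions) $\oplus$ ($\ol{L}\g$-orbit directions)'' with $\ol\alpha$ the projection onto the $\g$-summand). This fails: the stabilizer $(LG\times G)_\gamma$ is isomorphic to $G_a$ (via $g\mapsto(\lambda,g)$ with $\lambda(t)=\Ad_{\gamma(t)^{-1}}g$), so for every $X\in\g_a$ the vector $X_{\PG}|_\gamma$ coincides (up to sign) with $\xi_{\PG}|_\gamma$ for $\xi(t)=\Ad_{\gamma(t)^{-1}}X$. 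Hence the $G$- and $LG$-orbit directions overlap in a subspace of dimension $\dim\g_a\ge\operatorname{rank}G$, the claimed three-term splitting is not a direct sum, ``projection to the $\g$-summand along the other two'' is not defined, and no connection for $q$ can kill all $G$-generators (nor can a connection for $p$ kill all $LG$-generators). Your only concrete formula for $\alpha$, namely $\dot\gamma\mapsto\dot\gamma(0)\gamma(0)^{-1}$, is precisely the trivial connection that the paper points out does not extend to $\ol{T}$; you notice this, but the proposed replacement collapses for the reason above.

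The paper's proof resolves exactly this issue by working on the homogeneous space directly: writing $T_\gamma\N\cong(L\g\oplus\g)/(L\g\oplus\g)_\gamma$ and introducing the orthogonal projections $R_\mu\colon L\g\to L\g_\mu\cong\g_a\subset\g$ and $S_\mu\colon\g\to\g_a\cong L\g_\mu\subset L\g$, it sets $\alpha_\gamma[(\xi,X)]=X-R_\mu(\xi)$ and $\beta_\gamma[(\xi,X)]=\xi-S_\mu(X)$. The correction terms are what make the forms well defined on the quotient (they vanish on the anti-diagonal stabilizer), invariance is automatic from $(LG\times G)_\gamma$-equivariance of a single-point formula, and the extension to $\ol{T}\N$ is immediate because $R_\mu$ is the orthogonal projection onto a finite-dimensional space of loops of class $s$, hence continuous on $\ol{L}\g$, while $\beta$ extends by the same formula with $\xi\in\ol{L}\g$. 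In particular no Kato--Ponce or commutator estimate for $\Ad_{\gamma(\cdot)^{-1}}$ on the Sobolev-$\tfrac12$ space is needed; that part of your plan addresses a difficulty the correct formulas never encounter. If you want to salvage your ``construct on $\PG$ and restrict'' strategy, the same stabilizer-corrected formulas on slices, patched by an invariant partition of unity, are what the paper uses later for the general case (its Proposition on connections on the path fibration), but for a single orbit the pointwise equivariant formula suffices.
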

\begin{proof}
Let $\gamma\in \N$ be given, with $\mu=p(\gamma)\in \O$ and $a=q(\gamma)\in \Co$. 
Let 
$R_\mu\colon L\g\to \g$  be defined by the composition 
\[ R_\mu\colon L\g\to  L\g_\mu \cong \g_a\hra \g\]
where the first map is the orthogonal projection. Note that this 
extends to  the completion, $\ol{R}_\mu\colon \ol{L}\g\to \g$. 
The desired $LG$-invariant principal connection $\alpha$ on $p\colon \N\to \O$ is given by the $(LG\times G)_\gamma$-equivariant map 
\[ \alpha_\gamma\colon  T_\gamma\N\equiv  (L\g\oplus\g)/(L\g\oplus \g)_\gamma\to \g,
\ \ [(\xi,X)]\mapsto X-R_\mu(\xi).\]
This is well-defined, since $X-R_\mu(\xi)=0$ for $(\xi,X)\in (L\g\oplus \g)_\gamma$, and  it 
extends to a map on the completion (given by the same formula, with $R$ replaced by $\ol{R}$). Similarly, let 
 $S_\mu\colon \g\to L\g$ be the composition 
\[ S_\mu\colon \g\to \g_a\cong L\g_\mu\hra L\g\]
where the first map is orthogonal projection. The desired $G$-invariant principal connection $\beta$ on $q\colon \N\to \Co$ is given by the $(LG\times G)_\gamma$-equivariant map
\[ \beta_\gamma\colon T_\gamma\N\equiv   (L\g\oplus\g)/(L\g\oplus \g)_\gamma\to L\g,\ \ \ 
 [(\xi,X)]\mapsto \xi-S_\mu(X). \]
 Its extension to the completion is given by the same formula. 
\end{proof}
For the following result, we will assume that $G$ is simply connected. Then all 
central extensions of $G$ by $\U(1)$ are trivial, and since  $\on{Hom}(G,\U(1))=\{1\}$ the trivialization is in fact unique. 

\begin{theorem}[Twisted Spin-c structure of conjugacy classes]
For every conjugacy class $\Co\subset G$ of a compact, simply connected Lie group $G$, there is  a distinguished spinor module 
\[ \Cl(q^* T\Co)\circlearrowright \Sz_{q^*T \Co},\]
equivariant under $\wh{LG}^{\on{spin}}\times G$.  Here $q\colon \N\to \Co$ is the principal $LG$-bundle $\N\subset \P G$ consisting of all quasi-periodic paths with holonomy in $\Co$. 
\end{theorem}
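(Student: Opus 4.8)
The plan is to obtain $\Sz_{q^*T\Co}$ by pulling the spinor module $\Sz_{\ol T\O}$ over $\O$ back to $\N$, stabilising by a trivial $\g\oplus\g$ in order to land in the framework of the spin representation of $LG$, and then ``dividing out'' the loop-group directions using the spinor calculus of Section~\ref{sec:infinity}. First I would invoke the two invariant connections $\alpha,\beta$ of Proposition~\ref{prop:orbits}. Passing to the completions as in Section~\ref{sec:spin}, the connection $\beta$ on the principal $LG$-bundle $q\colon\N\to\Co$ yields an $LG\times G$-equivariant splitting $\ol T\N\cong q^*T\Co\oplus(\N\times\ol L\g)$, and $\alpha$ on the principal $G$-bundle $p\colon\N\to\O$ yields $\ol T\N\cong p^*\ol T\O\oplus(\N\times\g)$. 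Adding a trivial copy of $\g$ to each side and comparing, I get an $LG\times G$-equivariant isomorphism of strongly symplectic vector bundles over $\N$,
\[ q^*T\Co\oplus\bigl(\N\times(\ol L\g\oplus\g)\bigr)\ \cong\ p^*\ol T\O\oplus\bigl(\N\times(\g\oplus\g)\bigr), \]
whose finite-rank discrepancies are controlled by the operators $R_\mu,S_\mu$ of Proposition~\ref{prop:orbits}. The reason for the extra $\g\oplus\g$ is precisely the device of Section~\ref{subsec:centext}: $\g\oplus\g$ carries the $G$-invariant complex structure $(X,Y)\mapsto(-Y,X)$, so the right-hand side has an honest $\omega$-compatible complex structure, built from \eqref{eq:jmu} on $p^*\ol T\O$ and from this one on $\g\oplus\g$, while on the left-hand side $\ol L\g\oplus\g$ carries the complex structure $J_0\oplus(\text{std on }\ker J_0\oplus\g)$ defining the spin representation of $LG$ (this representation may be realised on the $\tfrac12$-completion just as well, depending only on the topology).

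Next I would compare spinor modules. On the right, $\Sz_{p^*\ol T\O\oplus(\N\times(\g\oplus\g))}\cong p^*\Sz_{\ol T\O}\otimes\Sz_{\g\oplus\g}$ is an \emph{honestly} $LG\times G$-equivariant spinor module for its Clifford bundle: $\Sz_{\ol T\O}$ is $LG$-equivariant, and since $G$ is simply connected the adjoint action lifts to the spin group, so $\Sz_{\g\oplus\g}$ is honestly $G$-equivariant. Transporting it across the isomorphism above gives an honest $LG\times G$-equivariant spinor module $\Sz^{\mathrm{tot}}$ for $\Cl(q^*T\Co)\otimes\Cl\bigl(\N\times(\ol L\g\oplus\g)\bigr)$. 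On the other hand the trivial bundle $\N\times(\ol L\g\oplus\g)$, with the spin-representation complex structure, carries the spinor module $\N\times\Sz_{\ol L\g\oplus\g}$; because the $LG$-action on $\ol L\g\oplus\g$ preserves only the polarization class of $J_0$ and not $J_0$ itself, this is only $\wh{LG}^{\on{spin}}\times G$-equivariant, the central circle of \eqref{eq:centext} acting with weight $-1$. Applying the fibrewise form of Proposition~\ref{prop:subspace} — with $W=q^*T\Co$ finite-dimensional of even rank (conjugacy classes of a compact connected group being even-dimensional) and $\mathsf H'=\N\times(\ol L\g\oplus\g)$ — I define
\[ \Sz_{q^*T\Co}:=\Hom_{\Cl(\N\times(\ol L\g\oplus\g))}\bigl(\N\times\Sz_{\ol L\g\oplus\g},\ \Sz^{\mathrm{tot}}\bigr), \]
a spinor module for $\Cl(q^*T\Co)$ whose $\Hom$-fibres are one-dimensional by Theorem~\ref{th:shasp}. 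It is $\wh{LG}^{\on{spin}}\times G$-equivariant with the central circle acting with weight $+1$ (it cancels between $\Sz^{\mathrm{tot}}$ and the numerator only up to this weight), as demanded.

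For Proposition~\ref{prop:subspace} to apply I must check that $\Sz^{\mathrm{tot}}$ and $\N\times\Sz_{\ol L\g\oplus\g}$ come from complex structures in the \emph{same} polarization class; since the discrepancy between the $\alpha$- and $\beta$-splittings of $\ol T\N$ is finite rank, this reduces to the assertion that, under the distinguished polarization of $\ol T\O$ (Theorem~\ref{th:completedtangentbundle}), one has $J_\mu\sim J_0$ on $\ol L\g$. I expect this to be the main obstacle, and would handle it by choosing the base point of the orbit $\O$ to be a \emph{constant} connection $X_0\,\partial t$ — every gauge orbit in $\A$ contains one: since $\ad_{X_0}$ is skew-adjoint with bounded spectrum, $J_{X_0\,\partial t}-J_0$ affects only finitely many Fourier modes and is hence finite rank; and for an arbitrary point $\mu=\lambda.(X_0\,\partial t)$ of $\O$ one has $J_\mu=\Ad_\lambda\,J_{X_0\,\partial t}\,\Ad_\lambda^{-1}$ up to Hilbert--Schmidt, which is $\sim J_0$ because the gauge action lands in $\on{O}_{res}(\ol L\g)$ (Section~\ref{subsec:centext}, \cite{pr:lo}). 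Finally, independence up to $\wh{LG}^{\on{spin}}\times G$-equivariant homotopy follows in the usual way: the connections $\alpha,\beta$, the auxiliary metrics, and the compatible complex structures on $\ol T\O$ and on $q^*T\Co$ vary in contractible or at least path-connected families (for the complex structures by the result of Section~\ref{subsec:compatiblecs} that any two in a polarization class are joined by a path within it), and Theorem~\ref{th:shasp}(b) keeps the intertwiner spaces one-dimensional along such paths, so the isomorphism class of $\Sz_{q^*T\Co}$ is unchanged.
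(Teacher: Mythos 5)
Your overall architecture is the same as the paper's: split $\ol{T}\N$ two ways using the connections $\alpha,\beta$, stabilize by a trivial $\g\oplus\g$, take the spinor module coming from the orbit side, and then remove the loop directions by taking $\Hom$ over a Clifford algebra of loop type, with the correct central-weight bookkeeping. Your explicit verification that $J_\mu\sim J_0$ (constant-connection base points plus the fact that the gauge action lies in the restricted orthogonal group) is correct and is a useful supplement to the nontriviality hypothesis of Proposition \ref{prop:subspace}, which the paper leaves implicit.

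However, there is a genuine gap at the descent step, precisely where the paper inserts the isometry $D_\mu^{1/2}\colon \ol{L}\g\to {\bf L}\g$. You $\Hom$ out a \emph{fixed} module $\N\times \Sz_{\ol{L}\g\oplus\g}$ over a \emph{fixed} Clifford algebra $\Cl(\ol{L}\g\oplus\g)$, asserting that the spin representation ``may be realised on the $\tfrac12$-completion just as well, depending only on the topology.'' This is not so: a spinor module and its Clifford algebra depend on a choice of metric, and two things go wrong. First, the metric that the loop directions actually carry inside $\Sz^{\mathrm{tot}}$ is the one induced from the complex structure \eqref{eq:jmu} on $\ol{T}\O$; on $\ker\ol{T}q\cong\N\times\ol{L}\g$ it is the $\mu$-dependent pairing $(\xi,\zeta)\mapsto\int_{S^1}D_\mu\xi\cdot\zeta$, so there is no single metric on $\ol{L}\g\oplus\g$ for which your fixed $\Cl(\ol{L}\g\oplus\g)$ acts on $\Sz^{\mathrm{tot}}$; one must first straighten the varying metrics by a smooth family of isometries onto a fixed model (this is exactly what the family $D_\mu^{1/2}$, or the correction $A=(g_1^\sharp g_0^\flat)^{1/2}$ of Lemma \ref{lem:sim}, accomplishes), and you never perform or even flag this step — your polarization check addresses only the complex structures, not the metrics and Clifford algebras. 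Second, and more seriously for the equivariance claim, $\wh{LG}^{\on{spin}}$ is defined through implementers of the $LG$-action on $\Sz_{{\bf L}\g\oplus\g}$, and Theorem \ref{th:shasp}(c) produces implementers only for \emph{orthogonal} transformations in $\on{O}_{res}$: the $L^2$ metric on ${\bf L}\g$ is $\Ad$-invariant, whereas no Sobolev-$\tfrac12$ metric on $\ol{L}\g$ is, so on your model the loop group does not act orthogonally and the claimed $\wh{LG}^{\on{spin}}\times G$-equivariance of $\N\times\Sz_{\ol{L}\g\oplus\g}$ (hence of your $\Sz_{q^*T\Co}$) is not established within the framework of Section \ref{sec:infinity}. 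Passing to ${\bf L}\g$ with the $L^2$ metric via $D_\mu^{1/2}$ repairs both points simultaneously, which is why the paper's proof routes through it. (A small slip besides: the $\Hom$-fibres over the loop-type Clifford algebra are not one-dimensional — they form a spinor module of $\Cl(q^*T\Co)$ of rank $2^{\dim\Co/2}$; one-dimensionality in Theorem \ref{th:shasp}(b) concerns intertwiners over the full Clifford algebra.)
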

\begin{proof}
The connections $\alpha,\beta$ constructed above give  $LG\times G$-equivariant isomorphisms of $\ol{T}\N$ with 
$p^*\ol{T}\O\times \g$ and $q^*T\Co \times \ol{L}\g$, respectively. 
Adding another copy of the trivial bundle $\N\times \g$ (with the trivial action of $G$ on $\g$) we obtain $LG\times G$-equivariant isomorphisms
\[  p^*\ol{T}\O\times (\g\oplus\g)\ \cong\ \ol{T}\N\times \g 
\cong\  q^*T\Co \times (\ol{L}\g\oplus \g).\]
The $LG$-invariant symplectic structure, compatible complex structure, and associated Riemannian metric on 
$\ol{T}\O$ pull back to $p^*\ol{T}\O$; together with the standard complex structure and given Riemannian metric on $\g\oplus \g$, these define a  Riemannian metric and orthogonal complex structure on $\ol{T}\N\times\g$. 
The corresponding $LG\times G$-equivariant spinor module $\Sz_{\ol{T}\N\times \g}$ is simply the tensor product of $p^*\Sz_{\ol{T}\O}$ with $\N\times  \wedge \g^\C$. 
(The $G$-action does not preserve the complex structure, but the central extension of $G$ acting on the spinor module is uniquely trivialized.) 

The (completed) tangent space to the fibers of $q\colon \N\to \Co$ is a trivial bundle 
$\ker \ol{T}q\cong \ol{L}\g$, and inherits a metric as a subbundle of $\ol{T}\N$. We will  need an isometric isomorphism of $\ker\ol{T}q=\N\times \ol{L}\g$ with $\N\times \mathbf{L}\g$, where $  \mathbf{L}\g$ has the $L^2$-metric. At $\gamma\in\N$, the  splitting $\ol{T}_\gamma\N\cong \ol{T}_\mu \O\oplus \g$ given by $\alpha$ restricts to the isomorphism 
\[ \ol{L}\g=\ker \ol{T}_\gamma q\to \ol{T}_\mu \O\oplus \g_a,\ \ 
\xi\mapsto (\xi_\O(\mu),\,-R_\mu(\xi)).\]
The resulting metric on $\ol{L}\g $ is 
the direct sum of the metric on $\ol{T}_\mu \O\cong \ol{L}\g_\mu^\perp$ and the 
given metric on $L\g_\mu\cong \g_a\subset \g$. Put differently, it is given by 
\begin{equation} \label{eq:dmu}
(\xi_1,\xi_2)\mapsto \int_{S^1} D_\mu \xi_1\cdot \xi_2
\end{equation}
where $D_\mu$ is the first order pseudo-differential operator given by 
$|\partial_\mu|$ on $\ol{L}\g_\mu^\perp$ and by the identity on $L\g_\mu$. 
(We use the standard volume form on $S^1$ to identify 1-forms with functions.) 
The square root of $D_\mu$ gives an $LG_\mu$-equivariant isomorphism 
\[ D_\mu^{1/2} \colon \ol{L}\g\to {\bf L\g}\]
intertwining the two metrics, and the collection of these operators gives the desired 
$LG\times G$-isometric bundle isomorphism $\ker\ol{T}q\to \N\times  {\bf L\g}$. 
We  may thus define a spinor module over $\Cl(q^*T M)$ as
\begin{equation}\label{eq:conjspinor}
 \Sz_{q^* T\Co}:=\Hom_{\Cl({\bf L\g}\oplus \g)}(\Sz_{{\bf L\g}\oplus \g},
\Sz_{\ol{T}\N\times\g}). 
\end{equation}
It inherits an action of  $\wh{LG}^{\on{spin}}\times G$. 
\end{proof}

The presence of the central extension prevents us from taking a quotient by $LG$ to obtain a spinor module over $\Co$ itself; in this sense we think of $\Sz_{q^* T\Co}$ as a \emph{twisted} $\Spin_c$-structure on $\Co$. There are examples of conjugacy classes 
of simply connected compact Lie groups 
not admitting ordinary $\Spin_c$-structures (let alone canonical ones). See \cite[Example 4.6]{me:conj} and \cite[Appendix D]{lan:cha} for further discussion.

\subsection{Connections on $\P G$}\label{subsec:conn}
To extend this discussion to more general Hamiltonian loop group spaces, we will 
need an $LG$-invariant connection 
$\alpha\in \Omega^1(\P G,\g)$
on the principal $G$-bundle  $p\colon \P G\to \A$, as well as a $G$-invariant connection 
$\beta\in \Omega^1(\P G,L\g)$
on the principal $LG$-bundle $\P G\to G$, with the additional property that the bundle maps
\[ \alpha \colon T \P G\to \g,\ \ \ \beta\colon T\P G\to L\g\] 
extend to the completion $\ol{T}\P G$. 
This requires 
some care: As pointed out in Remark \ref{rem:4remarks}, the principal bundle $p\colon \PG\to \A$ has a canonical trivialization $\P G\to G,\ \ \gamma\mapsto \gamma(0)$, but the corresponding `trivial connection' $\alpha$ does \emph{not} extend to the completion. For example, at the trivial path $\gamma=e$, this connection is $\alpha_e\colon T_e\P G\to \g,\ \xi\mapsto \xi(0)$, which does not extend.

\begin{proposition}[Connections on the path fibration]\label{prop:splittings}
The principal bundles $p\colon \P G\to \A$ and $q\colon \P G\to G$ have invariant principal connections 
\[ \alpha\in \Omega^1(\P G,\g)^{LG\times G},\ \ \ 
\beta\in  \Omega^1(\P G,\,L\g)^{LG\times G},\]
with continuous extensions to the completions, $\ol{\alpha}\colon \ol{T}\P G\to \g,\ \ 
\ol{\beta}\colon \ol{T}\P G\to \ol{L}\g$.  
\end{proposition}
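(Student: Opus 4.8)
The plan is to reduce the construction of $\alpha$ and $\beta$ on all of $\P G$ to the orbit case already handled in Proposition \ref{prop:orbits}, by patching local models over the slices $V_a\subset G$ (equivalently $V_\mu\subset\A$) with a $G$-invariant partition of unity as in Section \ref{subsec:pathfib}. More precisely, over the flow-out $G\cdot V_a$ one has $q\colon \P G\to G$ restricting to a bundle whose structure is controlled by the isomorphic-slices diagram \eqref{eq:isomorphicslices}: the preimage $q^{-1}(G\cdot V_a)$ is equivariantly diffeomorphic to $(LG\times G)\times_{(LG\times G)_\gamma} V_\gamma$, and $V_\gamma$ is an $(LG\times G)_\gamma$-manifold of the product form $U_a\times(\text{orbit directions})$. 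The key point is that the $(LG\times G)_\gamma\cong G_a$ stabilizer is compact and finite-dimensional, so there is no analytic subtlety in averaging; the only thing that must be checked is the extension to the $\ol{L}\g$-completion, and that is where the choices must be made carefully.

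First I would construct $\alpha$, the connection on $p\colon \P G\to \A$. The obstruction, as the remark preceding the proposition emphasizes, is that the naive trivialization $\gamma\mapsto\gamma(0)$ gives a connection whose vertical projection $\xi\mapsto\xi(0)$ does not extend to Sobolev class $\tf12$. So instead I would build $\alpha$ out of the orbit-wise connections of Proposition \ref{prop:orbits}: over each slice neighborhood the map $R_\mu\colon L\g\to\g$ (orthogonal projection onto $L\g_\mu\cong\g_a$, followed by inclusion) extends to $\ol R_\mu\colon\ol L\g\to\g$ because $\g_a$ is finite-dimensional, hence the orbit-wise formula $[(\xi,X)]\mapsto X-R_\mu(\xi)$ extends. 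Then I would glue these using the $G$-invariant partition of unity $\{\chi_a\}$ subordinate to $\{G\cdot V_a\}$, pulled back to $\P G$ via $p$ and $q$: since the space of principal connections is affine and each local piece extends to $\ol{T}\P G$, the convex combination $\alpha=\sum_a \chi_a\,\alpha_a$ is again a principal connection extending to the completion, and it is $LG\times G$-invariant because each ingredient is. Next, $\beta$ on $q\colon\P G\to G$ is constructed identically, using the operators $S_\mu\colon\g\to\g_a\cong L\g_\mu\hookrightarrow L\g$ from Proposition \ref{prop:orbits}, whose extension to $\ol L\g$ (now landing in $\ol L\g_\mu\subset\ol L\g$) is again automatic as $\g_a$ is finite-dimensional; gluing with the same partition of unity gives $\beta\in\Omega^1(\P G,L\g)^{LG\times G}$ extending to $\ol\beta\colon\ol T\P G\to\ol L\g$.

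The main obstacle I expect is not the algebra of patching but verifying that the extension to $\ol{T}\P G$ is genuinely continuous and independent of the slice chosen, i.e.\ that over overlaps $G\cdot V_a\cap G\cdot V_{a'}$ the two local extensions are compatible enough that a convex combination still extends. This comes down to the statement that $\ol R_\mu$ and $\ol R_{\mu'}$ differ by a finite-rank (hence bounded, hence $\ol L\g$-continuous) operator, which holds because both project onto finite-dimensional subspaces $\g_a,\g_{a'}$ of $\ol L\g$; the same for $\ol S_\mu$ versus $\ol S_{\mu'}$. One also has to be a little careful that the identification $\ol T\P G|_{q^{-1}(G\cdot V_a)}\cong$ (local model) respects the $\ol L\g$-topology on orbit directions — but this is exactly the content of how $\ol T Q$ was defined for proper $LG$-manifolds in Section \ref{sec:spin}, applied to $Q=\P G$ with its free (in particular proper) $LG$-action, so it is already available. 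With continuity and invariance in hand, the two defining properties of a principal connection ($\alpha$ restricts to the identity on vertical vectors for the $G$-action, $\beta$ likewise for the $LG$-action, and both are equivariant) are inherited termwise from Proposition \ref{prop:orbits} and survive the convex combination since $\sum_a\chi_a=1$.
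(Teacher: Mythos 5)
Your proposal is correct and follows essentially the same route as the paper: define local connections over the slice flow-outs by the formulas $[(v,\xi,X)]\mapsto X-R_\mu(\xi)$ and $[(v,\xi,X)]\mapsto \xi-S_\mu(X)$ (slice directions horizontal), note that these extend to $\ol{T}\P G$ because $R_\mu,S_\mu$ involve only projections onto the finite-dimensional subspaces $L\g_\mu\cong\g_a$, and patch with the $LG\times G$-invariant partition of unity from Section \ref{subsec:pathfib}, using that convex combinations of principal connections are principal connections. Your extra check on overlaps is harmless but not needed, since each local piece extends continuously on its own.
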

\begin{proof}
We first define a connection over an $LG\times G$-invariant open neighborhood of a given $\gamma\in \P G$. Let $\mu=p(\gamma),\ a=q(\gamma)$, and let  
$V_\gamma\subset \PG,\ V_\mu\subset \A,\ V_a\subset G$ be the slices described in Section \ref{subsec:pathfib}.  Then 
\[ T\P G|_{V_\gamma}=(TV_\gamma\times (L\g\oplus \g))/\sim\]
where the quotient is by the anti-diagonal inclusion of $(L\g\oplus \g)_\gamma$. The completion $\ol{T}\P G|_{V_\gamma}$ has a similar description, with $\ol{L}\g$ on the right hand side. Let $R_\mu,S_\mu$ be as in the proof of Proposition \ref{prop:orbits}. The $(LG\times G)_\gamma$-equivariant map
\begin{equation}\label{eq:spiltp1}
 T\P G|_{V_\gamma}\to\g,\ \ \ [(v,\xi,X)]\mapsto X-R_\mu(\xi),\end{equation}
for $v\in TV_\gamma,\ \xi\in L\g,\ X\in\g$, 
extends to a map on the completions, given by the same formula. It extends by equivariance to an $LG$-invariant connection for $p\colon \P G\to \A$ over 
$LG.V_\gamma\subset \P G$. 

Similarly, the connection on 
$q\colon \P G\to G$ over $LG.V_\gamma$ is given by 
the $(LG\times G)_\gamma$-equivariant map
\begin{equation}\label{eq:splitq1}
 T\P G|_{V_\gamma}\to L\g,\ \ [(v,\xi,X)]\mapsto \xi-S_\mu(X).\end{equation}
It is well-defined, because $\xi-S_\mu(X)=0$ for $(\xi,X)\in (L\g\oplus \g)_\gamma$. 
Having thus constructed equivariant connections  over flow-outs  of cross-sections, we may use an $LG\times G$-invariant partition of unity, as in Section \ref{subsec:pathfib}, to patch to 
global connections over all of $\P G$.  
\end{proof}
The connections $\alpha,\beta$ give  $LG\times G$-equivariant isomorphisms 
\[ p^*\ol{T}\A\times \g\ \cong\ \ol{T}\P G\ \cong\  q^*TG \times \ol{L}\g.\]
These have the useful property that for all $\gamma\in \PG$, the tangent space $T_\gamma V_\gamma$ to the slice is  contained in the `horizontal space' for the connection. In fact, there is an open neighborhood of  $\gamma$ inside $V_\gamma$ such that $T_{\gamma_1}V_\gamma$ is horizontal for all $\gamma_1$ in that open neighborhood.

\subsection{The twisted $\Spin_c$-structure for general quasi-Hamiltonian spaces}\label{subsec:twistedgeneral}
The construction for conjugacy classes generalizes to arbitrary compact quasi-Hamiltonian $G$-spaces. Unlike the case of conjugacy classes, the construction depends on some  choices, but these choices and the resulting twisted $\Spin_c$-structure are unique up to homotopy. 
\begin{theorem}\label{th:generalham}
Let $G$ be a compact simply connected Lie group, and $\M$ be a proper Hamiltonian $LG$-space, with associated quasi-Hamiltonian 
$G$-space $M$. Consider the correspondence diagram \eqref{eq:corrn}. 
Then there is a spinor module 
\[  \C l(q^*TM)\circlearrowright \Sz_{q^* TM},\]
equivariant with respect to the action of the spin-central extension of $\wh{LG}^{\on{spin}}\times G$. 
The spinor module is canonically defined, up to equivariant  homotopy. 
\end{theorem}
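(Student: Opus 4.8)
The plan is to follow the strategy indicated at the start of this subsection, in direct parallel with the conjugacy-class case of Section \ref{subsec:orbits}: pull $\Sz_{\ol{T}\M}$ back to $\N$ along $p$, use principal connections to split $\ol{T}\N$ in two ways, and then divide out the loop directions using the spin representation of $\wh{LG}^{\on{spin}}\times G$ from Section \ref{subsec:centext}. First I would pull the invariant connections $\alpha,\beta$ of Proposition \ref{prop:splittings} back along $\Phi_\N\colon\N\to\P G$. Since, by the correspondence, $\N$ is simultaneously the pullback of $p\colon\P G\to\A$ along $\Phi$ and the pullback of $q\colon\P G\to G$ along $\Phi_M$, this yields an $LG$-invariant principal connection for the $G$-bundle $p\colon\N\to\M$ and a $G$-invariant principal connection for the $LG$-bundle $q\colon\N\to M$, both with continuous extensions to $\ol{T}\N$. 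Because the $LG$-action on $\N$ is free, $\ker\ol{T}q\cong\N\times\ol{L}\g$, and the two connections give $LG\times G$-equivariant isomorphisms
\[ p^*\ol{T}\M\oplus(\N\times\g)\ \cong\ \ol{T}\N\ \cong\ q^*TM\oplus(\N\times\ol{L}\g). \]

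Adding a trivial summand $\N\times\g$ (with $G$ acting trivially) to both sides, and transporting the strongly symplectic form, compatible complex structure and metric on $\ol{T}\M$ from Theorem \ref{th:completedtangentbundle} together with the standard complex structure on $\g\oplus\g$, I obtain a Riemannian metric and orthogonal complex structure on $q^*TM\oplus(\N\times(\ol{L}\g\oplus\g))$, with associated spinor module $p^*\Sz_{\ol{T}\M}\otimes(\N\times\wedge\g^\C)$; since $G$ is simply connected, the central extension of $G$ acting here is uniquely trivialized, so this module carries an $\wh{LG}^{\on{spin}}\times G$-action. Next, exactly as in Section \ref{subsec:orbits}, I would compute the metric induced on $\ker\ol{T}q$ as a subbundle of $\ol{T}\N$: via the $\alpha$-splitting it is the direct sum of the pulled-back metric on $\ol{L}\g_\mu^\perp\subset\ol{T}\M$ and the given metric on $L\g_\mu$, hence Hilbert--Schmidt equivalent to the model metric given by the operator $D_\mu$ ($|\partial_\mu|$ on $\ol{L}\g_\mu^\perp$, the identity on $L\g_\mu$). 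Composing $D_\mu^{1/2}$ with the square-root correction of Lemma \ref{lem:sim} gives an $LG\times G$-equivariant isometric bundle isomorphism $\ker\ol{T}q\oplus(\N\times\g)\cong\N\times(\mathbf{L}\g\oplus\g)$, under which the transported complex structure on the fibre becomes Hilbert--Schmidt close to the standard $J$ on $\mathbf{L}\g\oplus\g$ of Section \ref{subsec:centext}.

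I can then set
\[ \Sz_{q^*TM}:=\Hom_{\Cl(\mathbf{L}\g\oplus\g)}\big(\Sz_{\mathbf{L}\g\oplus\g},\ \Sz_{q^*TM\oplus(\N\times(\mathbf{L}\g\oplus\g))}\big), \]
which, applying Proposition \ref{prop:subspace} fibrewise with $W=q^*TM$ (of even rank, $M$ being even-dimensional), is a spinor module over $\Cl(q^*TM)$; it inherits an action of $\wh{LG}^{\on{spin}}\times G$ with the central circle acting with weight one, so it does not descend to $M$ -- a \emph{twisted} $\Spin_c$-structure. For well-definedness up to equivariant homotopy, the choices entering the construction are the compatible complex structure $J$ on $\ol{T}\M$ (unique up to homotopy within its polarization class by Theorem \ref{th:completedtangentbundle}), the invariant connections $\alpha,\beta$ on $\P G$ (the affine spaces of invariant principal connections are convex, so any two are homotopic through such), and auxiliary data (partitions of unity, cross-section complex structures) already absorbed into $J$; running the whole construction along a homotopy of these choices, and using that pullback, the functor $\Hom_{\Cl(\mathbf{L}\g\oplus\g)}(\Sz_{\mathbf{L}\g\oplus\g},-)$, and the functional calculus producing the isometries all vary continuously, yields a homotopy of spinor modules.

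The main obstacle is the polarization matching at the last step: one must verify that the complex structure transported to the $\mathbf{L}\g\oplus\g$-fibre is genuinely Hilbert--Schmidt equivalent to the standard one, so that the $\Hom$-space is nonzero and Proposition \ref{prop:subspace} applies. This is best handled locally via the symplectic cross-section theorem, exactly as in the proof of Theorem \ref{th:completedtangentbundle}: over $Y=\Phi^{-1}(V_\mu)$ the global $J$ differs from $J_\mu$ on $\ol{L}\g_\mu^\perp$ only by a finite-rank operator, the connection-induced horizontal spaces are transverse to the slice, and one tracks this finite-rank discrepancy -- together with the $D_\mu^{1/2}$ rescaling and the correction from Lemma \ref{lem:sim} -- through the isometric identification, then globalizes with an invariant partition of unity. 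A secondary technical point is checking that the functional-calculus operations ($D_\mu^{1/2}$ and the square root of Lemma \ref{lem:sim}) depend smoothly on the base point and are strictly $LG\times G$-equivariant, so that they assemble into honest equivariant bundle maps; this follows from the equivariance and fibrewise smoothness of $D_\mu$ and of the metrics involved.
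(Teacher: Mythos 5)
Your overall strategy is the paper's: pull back along $p$, split $\ol{T}\N$ with the connections $\alpha,\beta$, and quotient out the loop directions by taking $\Hom_{\Cl(\mathbf{L}\g\oplus\g)}(\Sz_{\mathbf{L}\g\oplus\g},-)$ via Proposition \ref{prop:subspace}. But there is a genuine gap at the step where you pass from $\ker\ol{T}q\cong\N\times\ol{L}\g$ to $\N\times\mathbf{L}\g$ by "composing $D_\mu^{1/2}$ with the correction of Lemma \ref{lem:sim}", and where you dismiss smoothness as a secondary point that "follows from the equivariance and fibrewise smoothness of $D_\mu$". This is false in general: $D_\mu$ is $|\partial_\mu|$ on $\ol{L}\g_\mu^\perp$ and the identity on $L\g_\mu=\ker\partial_\mu$, and for a general proper Hamiltonian $LG$-space the moment map image meets connections with stabilizers of varying dimension, so the rank of $\ker\partial_\mu$ jumps and $\mu\mapsto D_\mu$ (hence $D_\mu^{1/2}$) is not even continuous across those jumps. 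It works in the conjugacy-class case of Section \ref{subsec:orbits} only because there $\mu$ stays in a single coadjoint orbit, where the stabilizer type is constant; your proposal implicitly imports that feature into the general case, where it fails.

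The paper's proof exists precisely to repair this: one chooses a strictly positive $\chi\in C^\infty(\R)$ with $\chi(t)=|t|$ outside $(-\eps,\eps)$ and replaces $D_\mu^{1/2}$ by $\chi(\partial_\mu)^{1/2}$, defined by the functional calculus. This family \emph{does} vary smoothly and equivariantly in $\mu$ (no kernel projection, no non-smooth function of the spectrum near $0$), and differs from $D_\mu$ only by a finite-rank operator, so the resulting bundle isomorphism $\ker(\ol{T}q)\to\N\times\mathbf{L}\g$ intertwines the metrics up to Hilbert--Schmidt equivalence (this is Lemma \ref{lem:intertwines}, whose proof again uses the cross-sections and the adapted complex structure); only then does the correction of Lemma \ref{lem:sim} produce an exact equivariant isometry, after which your definition of $\Sz_{q^*TM}$ and the homotopy discussion go through. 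Note also that the cut-off $\chi$ must then be added to your list of choices that are unique up to homotopy; the rest of your uniqueness argument matches the paper's.
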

\begin{proof}
Pick an $LG$-invariant compatible complex structure $J\in \ca{J}_{\on{res}}(\ol{T}\M,\omega)$, within the polarization class from Theorem \ref{th:completedtangentbundle}, and let $\Sz_{\ol{T}\M}$ be its spinor module \eqref{eq:thespinormodule}. As in the case of orbits, we consider the splittings 
\[  p^*\ol{T}\M\times (\g\oplus\g)\ \cong\ \ol{T}\N\times \g 
\cong\  q^*TM \times (\ol{L}\g\oplus \g)\]
defined by $\alpha,\beta$. We obtain a Riemannian metric and orthogonal complex structure on $\ol{T}\N\times \g$, 
and a spinor module 
\[ \Sz_{ \ol{T}\N\times\g }
\cong p^*\Sz_{\ol{T}\M}\otimes (\N\times \wedge \g^\C)
\]
over $\Cl(\ol{T}\N\times \g)$. To `descend' under $q$, 
we need an $LG\times G$-equivariant isometric isomorphism 
\[ \ker(\ol{T}q)\cong \N\times \ol{L}\g\stackrel{\cong}{\longrightarrow} \N\times {\bf L\g}.\] 

The square root of the operator $D_\mu,\ \mu=\Phi(m)$ from the construction for orbits gives such an isometric isomorphism \emph{pointwise}, but the resulting bundle map is not smooth, due to the fact that rank of the kernel of $\partial_\mu$ need not be constant. 
To get around this problem, choose a strictly positive function $\chi\in C^\infty(\R)$ such that  $\chi(t)=|t|$ for $t$ outside 
some interval $(-\eps,\eps)$.  Then $\chi(\partial_\mu)$ (defined using the functional calculus)  differs from $D_\mu$ by a finite rank operator. The collection of $LG_\mu$-equivariant isomorphisms 
\begin{equation}\label{eq:chmu}\chi(\partial_\mu)^{1/2}\colon  \ol{L}\g\to {\bf L\g},
\end{equation}
defines an  $LG\times G$-equivariant bundle isomorphism 
\[ \P G\times \ol{L}\g\to \P G\times  {\bf L\g},\]
given at $\gamma$ by \eqref{eq:chmu} with $\mu=p(\gamma)$.  
Hence, by pull-back it gives an equivariant  
bundle isomorphism 
\begin{equation} \label{eq:bisom}\ker(\ol{T}q)
\to 
\N\times {\bf L\g}.
\end{equation}
By Lemma \ref{lem:intertwines} below, this bundle isomorphism \eqref{eq:bisom} intertwines metrics, up to Hilbert-Schmidt equivalence. The construction from Section \ref{subsec:spinor} modifies it further to an equivariant isomorphism exactly intertwining the metrics.  
%
Hence, 
$\Cl({\bf L\g}\oplus \g)$ acts on $\Sz_{\ol{T}\N\times\g}$, and applying Proposition \ref{prop:subspace},
\[ \Sz_{q^* TM}:=\Hom_{\Cl({\bf L\g}\oplus \g)}(\Sz_{{\bf L\g}\oplus \g},
\Sz_{\ol{T}\N\times\g})
\]
is a well-defined  $\wh{LG}^{\on{spin}}\times G$-equivariant spinor module over 
the Clifford algebra of $q^*T M$. 
The choices made in the construction of the spinor module $\Sz_{q^* TM}$ are:  the choice of an $\omega$-compatible complex structure on $\ol{T}\M$ (within the  equivalence class described in Theorem \ref{th:completedtangentbundle}), of 
splittings of the maps $\ol{T}p$ and $\ol{T}q$ as in Theorem \ref{th:shasp}, and of 
a cut-off function $\chi$.  All of these choices are unique up to homotopy. 
\end{proof}

It remains to prove: 

\begin{lemma}\label{lem:intertwines}
The bundle isomorphism \eqref{eq:bisom} intertwines metrics, up to Hilbert-Schmidt equivalence.
\end{lemma}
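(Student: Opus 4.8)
The goal is to compare two metrics on $\ker(\ol{T}q) \cong \N \times \ol{L}\g$: the metric $g_1$ obtained as a subbundle of $\ol{T}\N$ (where $\ol{T}\N$ carries the metric coming from $\ol{T}\M$ via $\alpha$ together with the standard metric on $\g \oplus \g$), and the metric $g_0$ obtained by pulling back the $L^2$-metric on $\mathbf{L}\g$ through the isomorphism \eqref{eq:bisom}. We must show $g_0 \sim g_1$ in the Hilbert-Schmidt sense, fiberwise and uniformly enough for the statement to make sense. The plan is to work at a single point $\gamma \in \N$ with $\mu = p(\gamma) = \Phi(m)$ for the corresponding $m \in \M$, and to reduce everything to the local model provided by the symplectic cross-section, exactly as in the proof of Theorem \ref{th:completedtangentbundle}.

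First I would recall that, in the orbit case treated in Section \ref{subsec:orbits}, the splitting induced by $\alpha$ identifies $\ker\ol{T}_\gamma q = \ol{L}\g$ isometrically with $\ol{T}_\mu\O \oplus \g_a \cong \ol{L}\g_\mu^\perp \oplus \g_a$, and the resulting metric is precisely \eqref{eq:dmu}, i.e. given by the operator $D_\mu$ which acts as $|\partial_\mu|$ on $\ol{L}\g_\mu^\perp$ and as the identity on $L\g_\mu \cong \g_a$. The isomorphism \eqref{eq:bisom} was built from $\chi(\partial_\mu)^{1/2}$, and the whole point of introducing the cut-off $\chi$ is that $\chi(\partial_\mu)$ differs from $D_\mu$ by a \emph{finite rank} operator; hence $\chi(\partial_\mu)^{1/2}$ transports the $L^2$-metric to a metric differing from the $D_\mu$-metric by a finite rank (a fortiori Hilbert-Schmidt) perturbation. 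So in the orbit case the lemma holds with "Hilbert-Schmidt" improved to "finite rank."

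For general $\M$ I would then invoke the decomposition \eqref{eq:tmdecomp1}, $\ol{T}\M|_Y \cong TY \oplus (Y \times \ol{L}\g_\mu^\perp)$, valid on the symplectic cross-section $Y = \Phi^{-1}(V_\mu)$. The key observation is that the metric $g_1$ on $\ker\ol{T}_\gamma q$, computed via $\alpha$, again splits: the $\ol{L}\g_\mu^\perp$-part carries exactly the metric \eqref{eq:dmu} built from $\partial_\mu$ (this summand is literally the coadjoint-orbit model, since the symplectic structure on $\{y\} \times \ol{L}\g_\mu^\perp$ is given by the same formula as for orbits), while the finite-dimensional complement $L\g_\mu \cong \g_a$ carries the fixed metric from $\g$. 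The only discrepancy between the general case and the orbit case is thus confined to the finite-dimensional piece $TY$ and to the finite-dimensional mismatch between the chosen compatible complex structure $J$ on $\ol{T}\M$ and the model structure $J_\mu$ on $\ol{L}\g_\mu^\perp \subset \ol{T}_m\M$ — and by the last sentence of the proof of Theorem \ref{th:completedtangentbundle}, that mismatch is of \emph{finite rank}. Combining this with the finite-rank bound from the cut-off $\chi$, the composite bundle map \eqref{eq:bisom} intertwines $g_0$ and $g_1$ up to a finite rank (hence Hilbert-Schmidt) operator, and the argument is uniform over the cross-section because the cross-section is finite-dimensional and one patches with the finite invariant partition of unity of Section \ref{subsec:pathfib}.

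\textbf{Main obstacle.} The delicate point is not any single algebraic identity but the bookkeeping of \emph{which} operator governs $g_1$ on the nose. One has to be careful that the subbundle metric on $\ker\ol{T}q \subset \ol{T}\N$ really does decompose compatibly with the $\alpha$-splitting $\ol{T}\N \cong p^*\ol{T}\M \times \g$ — i.e. that the $\g$-summand contributed by $\alpha$ and the $\g_a = L\g_\mu$ directions inside $\ol{L}\g$ interact only through finite-dimensional data — and that the pseudodifferential operator appearing is $|\partial_\mu|$ and not some $\mu$-dependent conjugate of it with uncontrolled off-diagonal terms. Once one has pinned down that $g_1$ is, up to finite rank, the direct sum of the $|\partial_\mu|$-metric on $\ol{L}\g_\mu^\perp$ and a fixed metric on the finite-dimensional complement, the conclusion is immediate from $D_\mu - \chi(\partial_\mu)$ being finite rank together with Lemma \ref{lem:sim}.
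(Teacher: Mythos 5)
Your proposal is correct and follows essentially the same route as the paper's proof: pass to the symplectic cross-section, use the finite-rank/Hilbert--Schmidt independence of the choices to take the complex structure adapted to $Y$ and the connection $\alpha$ built from $R_\mu$, identify the $\ol{L}\g_\mu^\perp$ summand with the coadjoint-orbit model carrying the $D_\nu$-metric, and conclude from the finite-rank difference $\chi(\partial_\nu)-D_\nu$. The ``main obstacle'' you flag is exactly what the paper settles by the short computation that, in the $\alpha$-splitting, the inclusion of $\ker(\ol{T}q)$ is $\xi\mapsto(\xi_\M(y),\,R_\mu(\xi))$, so the induced metric is $g_\M(\xi_\M,\zeta_\M)+R_\mu(\xi)\cdot R_\mu(\zeta)$, whose second term vanishes on $\ol{L}\g_\mu^\perp$ while the first term there is the $D_\nu$-metric.
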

\begin{proof}
We argue locally, near any given point $n\in \ca{N}$. Let $\gamma=\Phi_{\N}(n)$ and $\mu=\Phi(m)$, let $V_\gamma\subset \P G$ and $V_\mu\subset \A$ be the slices through 
these points,  as in Section \ref{subsec:pathfib}, and denote by $Y=\Phi^{-1}(V_\mu)\subset \M$ the symplectic cross-section through $m=p(n)$. 

By Lemma \ref{lem:hsequivalent}, any two choices of compatible complex structures on $\ol{T}\M$, within the polarization class  from Theorem \ref{th:completedtangentbundle}, give rise to Hilbert-Schmidt equivalent metrics on $\ol{T}\M$. The resulting metric on $\ol{T}\N$ also depends on the choice of $\alpha$; but the bundle map relating the 
splittings $\ol{T}\N\cong\ol{T}\M\times\g$ from any two choices of $\alpha$ differs 
from the identity by a finite rank bundle map; hence so do the $g^\flat$ maps. Hence, 
the Hilbert-Schmidt equivalence class of the metric on $\ol{T}\N$ is 
independent of the  choices made. In particular, we may take the complex structure $J$  adapted to the cross-section $Y$ (cf.~ the proof of Theorem  
  \ref{th:completedtangentbundle}), in the sense that 
it  is  the sum of an $LG_\mu$-invariant compatible complex structure on $TY$ and the standard complex structure  $J_\mu$ (cf.~ \eqref{eq:jmu}) on $\ol{L}\g_\mu^\perp$. The resulting metric on the $\ol{L}\g_\mu^\perp$ summand, at $y\in Y$ with image $
\nu=p(y)\in V_\mu$, is given by  
\begin{equation}\label{eq:firstterm}
 (\xi,\zeta)\mapsto D_\nu \xi\cdot\zeta.
 \end{equation} 
We may furthermore assume that the connection $\alpha$  is defined as in the proof of 
Proposition \ref{prop:splittings}, using the map $R_\mu\colon \ol{L}\g\to \g$. 
At  $z\in \Psi^{-1}(V_\gamma)$, with image point $y\in Y$, the splitting identifies $\ol{T}_z\N=\ol{T}_y\M\times \g$, the inclusion of  $\ker(\ol{T}q)_z=\ol{L}\g\to \ol{T}_z\N$ is given by 
$ \xi\mapsto (\xi_\M(y),\ R_\mu(\xi))$, and the metric   is therefore given by 
\begin{equation}\label{eq:metric1}
 (\xi,\zeta)\mapsto g_\M(\xi_\M,\, \zeta_\M)\big|_y+R_\mu(\xi)\cdot R_\mu(\zeta).\end{equation}
If $\xi,\zeta\in \ol{L}\g_{\mu}^\perp\subset \ol{L}\g$, the second term in \eqref{eq:metric1} does not contribute, while the first term is given by 
\eqref{eq:firstterm}.  We conclude that the given metric on $\ker(\ol{T}q)_z=\ol{L}\g$ 
 differs from the metric \eqref{eq:firstterm} only on a finite-dimensional 
subspace. On the other hand, the metric induced by \eqref{eq:bisom}
reads as 
\begin{equation}\label{eq:metric2}
(\xi,\zeta)\mapsto \chi(\partial_{\nu})\xi\cdot\zeta\end{equation}
Since $\chi(\partial_{\nu})-D_{\nu}$ has finite rank everywhere, we conclude that the two metrics agree on a subspace of finite codimension. 
\end{proof}

\subsection{The canonical line bundle}
Since the Clifford bundle $\C l(q^*TM)$ has finite rank, its spinor module $\Sz_{q^*TM}$ has a well-defined \emph{canonical line bundle} 
\[ \ca{K}=\Hom_{\Cl(q^*TM)}(\Sz_{q^* TM},\ \Sz_{q^* TM}^{\on{op}})\to \N\]
where the superscript ``$\on{op}$'' signifies the opposite (or \emph{dual}) Clifford module. This line bundle is equivariant for the action of the spin-central extension, in such a way that the central circle acts with  weight $-2$. The bundle $\ca{K}$ restricts to an equivariant line bundle over $\M$ (using the inclusion $\M\hra \N$ defined by the inclusion $\A\hra \P G$ as based paths). 
If the compact Lie group $G$ is  simple and simply connected, with dual Coxeter number $\cox$, and letting $\wh{LG}$ denote the \emph{basic} central extension of its loop group, 
then $\ca{K}$ is $\wh{LG}$-equivariant at level $-2\cox$. The canonical line bundle over a Hamiltonian loop group space had been constructed in \cite{me:can} in terms of cross-sections. For the case of coadjoint orbits of the loop group, such line bundles were first discussed in \cite{fre:geo}.

\subsection{Morita morphisms}\label{sec:morita}
The action of $\wh{LG}^{\on{spin}}$ on the 
Hilbert space $\Sz_{\bf{L\g}\oplus \g}$ descends to an action of $LG$ on 
the algebra of compact operators, $\K(\Sz_{\bf{L\g}\oplus \g})$. We define 
a $G$-equivariant Dixmier-Douady bundle
\[ \mathsf{A}^{\on{spin}}=\P G\times_{LG}\K(\Sz_{\bf{L\g}\oplus \g})^{\on{op}}\to \PG/LG=G\]
where the superscript stands for the opposite algebra. Suppose that $(\M,\omega,\Phi)$ is a proper Hamiltonian $LG$-space, and consider the 
factorization 
\begin{equation}\label{eq:spinorproduct} \Sz_{\ol{T}\N\times\g}\cong \Sz_{q^*TM}\otimes (\N\times \Sz_{\bf{L\g}\oplus \g}).\end{equation}
 of the spinor module over $\N$. We may regard the  $LG\times G$-equivariant spinor module $\Sz_{\ol{T}\N\times\g}$ as a bimodule, with $\Cl(q^* TM)$ acting by left multiplication and $\N\times \K(\Sz_{\bf{L\g}\oplus \g})^{\on{op}}$ acting by right multiplication.  Taking quotients by $LG$, this gives a $G$-equivariant Morita bimodule
\[  \Cl(TM) \ \circlearrowright\ \mathsf{E}\ \circlearrowleft\ \Phi^*\mathsf{A}^{\on{spin}}.\]
where $\mathsf{E}=\Sz_{\ol{T}\N\times\g}/LG$. In the terminology of \cite{al:ddd}, it is a Morita morphism 
\[ 
\xymatrix{ \Cl(TM) \ar@{-->}[r]\ar[d] &\  \mathsf{A}^{\on{spin}}\ar[d] \\
M \ar[r]_\Phi & G
}
\] 
This Morita morphism was constructed in \cite{al:ddd} using a completely different approach. 

\section{Abelianization}\label{sec:abel}
In this section, $G$ is a compact, simply connected Lie group, with a fixed 
maximal torus $T$. We show that if the moment map of a proper Hamiltonian $LG$-space is transverse to $\t^*\subset \A$ (as a space of constant connections valued in the Lie algebra of the maximal torus),  then the pre-image $X=\Phi^{-1}(\t^*)$ inherits a $T$-equivariant $\Spin_c$-structure which is also equivariant under a central extension of the lattice $\Lambda\subset \t$. We will also explain how to deal with the non-transverse case. Before discussing the infinite-dimensional setting, we will review the known construction for Hamiltonian $G$-spaces.

\subsection{The spinor module $\Sz_{\g/\t}$}\label{subsec:sgt}
Let $T\subset G$ be a maximal torus , with normalizer $N(T)$ and Weyl group $W=N(T)/T$. We denote by $\mf{R}\subset \t^*$ be the (real) roots $\alpha$ of $(G,T)$, and let $\mf{R}_+$ be the set of of positive roots relative to some choice of Weyl chamber $\t_+\subset \t$. It determines a $T$-invariant complex structure on $\g/\t\cong \t^\perp$, in such a way that the $+\sqrt{-1}$ eigenspace $\n_+=(\g/\t)^+\subset (\g/\t)^\C$ is the direct sum of the root spaces for the positive roots. Let 
\[ \Sz_{\g/\t}=\C l(\g/\t)/\Cl(\g/\t)\,\n_-\cong \wedge \n_+\]
be the associated $T$-equivariant spinor module over $\C l(\g/\t)$. 
The $N(T)$-action on $\g/\t$ does not preserve the complex structure, hence 
does not give an action on the spinor module. The set of `implementers' of this action defines a central extension of $N(T)$ by $\U(1)$, with an action on the 
spinor module such that the central circle acts with weight $1$.  Equivalently, this central extension is the pull-back of the central extension 
\[ 1\to \U(1)\to \on{Pin}_c(\g/\t)\to \on{O}(\g/\t)\to 1\]
under the action of $N(T)$ on $\g/\t$. The $T$-action on the spinor module defines a
homomorphism from $T$ into this central extension; we will identify $T$ with its image in 
$\wh{N(T)}$. The following is well-known. 
\begin{lemma}
Let $g\in N(T)$, with lift $\wh{g}\in\wh{N(T)}$. Then 
\begin{equation}\label{eq:commweil}
  \wh{g}^{-1}t\wh{g} =t^{\rho-w\rho}\ w^{-1}(t),\end{equation}
where $w\in W$ is the Weyl group element determined by $g$, and  $t^{\rho-w\rho}\in \U(1)$ is the image of $t$ under the homomorphism 
$T\to \U(1)$ defined by the weight $\rho-w\rho$. 
\end{lemma}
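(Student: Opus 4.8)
The plan is to prove the asserted identity as an identity of operators on $\Sz_{\g/\t}\cong\wedge\n_+$, by evaluating both sides on the ``vacuum'' line $\C\cdot 1=\wedge^0\n_+$, which is precisely the weight-zero line for the $T$-action.

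First I would record two structural facts. The left-hand side $\wh g^{-1}t\wh g$ is independent of the chosen lift $\wh g$ (two lifts differ by a scalar), and as an operator it implements $\Ad_{g^{-1}}\circ\Ad_{t}\circ\Ad_{g}=\Ad_{g^{-1}tg}=\Ad_{w^{-1}(t)}$, where I use the convention $gsg^{-1}=w(s)$ for $s\in T$. Since $w^{-1}(t)\in T$, its adjoint action preserves the complex structure of $\g/\t$, so the element $w^{-1}(t)\in T\subset\wh{N(T)}$ --- which by definition acts on $\Sz_{\g/\t}$ as $\wedge\Ad_{w^{-1}(t)}$ --- is a second implementer of $\Ad_{w^{-1}(t)}$. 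By uniqueness of implementers up to a scalar (classical for the finite-dimensional Clifford module $\wedge\n_+$; cf.~Theorem~\ref{th:shasp}(c)), there is $c(t)\in\U(1)$ with $\wh g^{-1}t\wh g=c(t)\,w^{-1}(t)$ in $\wh{N(T)}$, and $t\mapsto c(t)$ is a character of $T$. It remains only to identify this character.

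For that I would compute $\wh g\cdot 1$. The vacuum $1$ is characterized up to scalar as the vector of $\wedge\n_+$ annihilated by the Clifford action $\varrho$ of $\n_-=\overline{\n_+}$; by the implementer property, $\wh g\cdot 1$ is then characterized up to scalar as the vector annihilated by $\varrho$ of $\Ad_g(\n_-)=\overline{\Ad_g\n_+}=\bigoplus_{\gamma\in w\mf{R}_-}\g_{\gamma}$. Splitting $w\mf{R}_-=(w\mf{R}_-\cap\mf{R}_-)\sqcup(w\mf{R}_-\cap\mf{R}_+)$ and using that root spaces $\g_\gamma$ with $\gamma\in\mf{R}_+$ act by exterior multiplication and those with $\gamma\in\mf{R}_-$ by contraction, the annihilation conditions pin down $\wh g\cdot 1$, up to a nonzero scalar, as $\bigwedge_{\beta\in A}e_\beta$, where $A=\mf{R}_+\cap w\mf{R}_-$ and $e_\beta$ spans the root space $\g_\beta$. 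A short bookkeeping of the inversion set of $w$ gives $\sum_{\beta\in A}\beta=\rho-w\rho$, so $\wh g\cdot 1$ is a $T$-weight vector of weight $\rho-w\rho$. Combining, $\wh g^{-1}t\wh g\cdot 1=\wh g^{-1}\big(t\cdot(\wh g\cdot 1)\big)=\wh g^{-1}\big(t^{\rho-w\rho}\,\wh g\cdot 1\big)=t^{\rho-w\rho}\cdot 1$, while $w^{-1}(t)\cdot 1=1$; hence $c(t)=t^{\rho-w\rho}$ and \eqref{eq:commweil} follows.

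The step I expect to be the main obstacle is the explicit identification of $\wh g\cdot 1$ with $\bigwedge_{\beta\in A}e_\beta$, and in particular getting the sign right in $\sum_{\beta\in A}\beta=\rho-w\rho$ (rather than $w\rho-\rho$): this is a purely combinatorial computation with the inversion set of $w$, but it is exactly where the weight $\rho-w\rho$ is produced, and it must be carried out consistently with the conventions fixed for $\Sz_{\g/\t}$ --- in particular with the fact that $T$ is taken to act on $\wedge\n_+$ by $\wedge\Ad$ with no $\rho$-shift. As an alternative to the explicit vector one could instead extract $c(t)$ from a super-trace / character computation on $\Sz_{\g/\t}$, but the vacuum-vector argument seems the most direct.
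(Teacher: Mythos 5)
Your argument is correct and is essentially the paper's own proof: both reduce to identifying the scalar relating $\wh g^{-1}t\wh g$ to the canonical lift of $w^{-1}(t)$ by evaluating on the vacuum $1\in\wedge\n_+$, characterizing $\wh g\cdot 1$ as the pure spinor line annihilated by the root vectors for $w\mf{R}_-$, hence spanned by $\bigwedge_{\beta\in\mf{R}_+\cap w\mf{R}_-}e_\beta$, whose $T$-weight is $\sum_{\beta\in\mf{R}_+,\,w^{-1}\beta\in\mf{R}_-}\beta=\rho-w\rho$. The extra bookkeeping you add (lift-independence, $c(t)$ being a character, the uniqueness of implementers) only makes explicit what the paper leaves implicit, and your sign computation matches the paper's.
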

\begin{proof}
Since the left hand side of \eqref{eq:commweil} is a lift of $g^{-1}tg=w^{-1}(t)$, it 
differs from the `canonical' lift by a scalar. To determine this scalar, let us apply both sides to the `vacuum vector' $1\in \wedge\n_+$. This element is annihilated by the Clifford action of all root vectors $e_\alpha$ with $\alpha\in 
\mf{R}_-$, hence $\wh{g}.1$ is annihilated by all $g.e_\alpha$ with $\alpha\in \mf{R}_-$. These are the root vectors for the weights $w\alpha$, and the pure spinor line annihilated by these root vectors is spanned by the wedge product of root vectors for roots $\beta\in\mf{R}_+$ such that $w^{-1}\beta\in \mf{R}_-$. The weight for the action of $t$ on this wedge product is thus the sum over all positive roots $\beta$ for which $w^{-1}.\beta$ is negative. But this is the weight $\rho-w\rho$. 
\end{proof}
The spinor module has a $\Z_2$-grading $\Sz_{\g/\t}^{ev} \oplus \Sz_{\g/\t}^{odd}$, and the $\Z_2$-graded character 
\[ \Delta(t)=\on{tr}\big(t\big|_{\Sz_{\g/\t}^{ev}}\big) 
-\on{tr}\big(t\big|_{\Sz_{\g/\t}^{odd}}\big) \]
is complex conjugate to the Weyl denominator:
\[ \Delta(t)^*=\prod_{\alpha\in\mf{R}_+}(1-t^{-\alpha})=\sum_{w\in W}(-1)^{l(w)} t^{w\rho-\rho}.\]

\begin{remark}
The structure group of the central extension $\wh{N(T)}$ can be reduced to $\Z_2$, by taking the pull-back of $\on{Pin}(\g/\t)$ rather than $\on{Pin}_c(\g/\t)$. The  embedding $T\to \wh{N}(T)$ does not take values in this $\Z_2$-central extension. However, if $G$ is simply connected, then $\rho$ is a weight, and one can use a new lift 
\[ \iota\colon T\to \wh{N(T)},\ \ t\mapsto \wh{t}=t^{-\rho}\,t\] 
which takes values in $\on{Pin}$. Equation \eqref{eq:commweil} shows that the image of this map is a normal subgroup, resulting in a  central extension of $W$ by $\Z_2=\pm 1$. 
See \cite{mor:pro} for an explicit description of this central extension in terms of generators and relations. 
\end{remark}

There is a similar discussion for the Clifford algebra over $\mathbf{L}\g/\t$ (identified with the orthogonal space to $\t$ inside $\mathbf{L}\g$). This space has a complex structure whose $+\sqrt{-1}$ eigenspace is spanned by $\n_+$ together with the subspace of $\mathbf{L}\g^{\mathbb{C}}$ spanned by the positive Fourier modes. It defines a 
$T$-equivariant spinor module 
\[ \Sz_{\mathbf{L}\g/\t}=\ol{\wedge (\mathbf{L}\g/\t)^+}\]
where the bar signifies a Hilbert space completion. 

The action of the subgroup $\Lambda\rtimes N(T)\subset LG$, where $\Lambda\subset \t$ is the integral lattice embedded as `exponential paths', preserves $\t$, hence also 
$\mathbf{L}\g/\t$. The latter action is by transformations in the restricted orthogonal group $\on{O}_{res}(\mathbf{L}\g/\t)$, hence we obtain 
a central extension of $\Lambda\rtimes N(T)$ consisting of all implementers of this action.

As before it is convenient to do computations using a basis of root vectors.  There is an action of $T \times S^1$ on $\mathbf{L}\g^{\mathbb{C}}$, where $T$ acts by the adjoint action and $S^1$ acts by rotating the loop.  The non-zero weights for this action are the \emph{affine roots} $\mf{R}_{\on{aff}}$: all pairs $(\alpha,n)$, $\alpha \in \mf{R} \cup \{0 \}$, $n \in \mathbb{Z}$ where either $n \ne 0$, or $n=0$ and $\alpha \ne 0$.  The subset $\mf{R}_{\on{aff},+}$ with either $n>0$ or $n=0$ and $\alpha \in \mf{R}_+$ are the \emph{positive affine roots}; the latter are the weights for the action of $T \times S^1$ on $(\mathbf{L}\g/\t)^+$.  Let $\mf{R}_{\on{aff},-}=-\mf{R}_{\on{aff},+}$ denote the negative affine roots.

Since the $T$-action 
preserves the complex structure, the central extension is canonically trivial over $T$, and given a lift $\wh{g}$ of $g\in \Lambda\rtimes N(T)$, where $g$ maps to the affine Weyl group element $w\in \Lambda\rtimes W$, we have 
\begin{equation}\label{eq:skew}
\wh{g}^{-1}t\wh{g}=t^{\sum'\alpha}\ w^{-1}(t),
\end{equation}
where the sum $\sum'\alpha$ is over all $(\alpha,n) \in \mf{R}_{\on{aff},+}\cap w \mf{R}_{\on{aff},-}$.

If $\g$ is simple then
\[ \sideset{}{'}\sum \alpha=\rho-w\rho. \]
Here $w\rho$ denotes the action of the affine Weyl group on $\t^\ast$ at level the \emph{dual Coxeter number} $\cox$; in terms of the basic inner product $B$ for $\g$, the latter is generated by reflections in the affine hyperplanes $\cox H_{(\alpha,n)}$ where 
\[ H_{(\alpha,n)}=\{\xi \in \t^\ast|B(\alpha,\xi)+n=0 \}\]
and $(\alpha,n) \in \mf{R}_{\on{aff},+}$.
\begin{remark}
Let $\theta$ denote the highest root.  Using the formula
\[ \cox=1+B(\rho,\theta)\]
for the dual Coxeter number, one finds immediately that $\rho-w\rho=-\theta$, for $w$ the reflection in $\cox H_{(-\theta,1)}$.  This verifies the formula $\sum'\alpha=\rho-w\rho$ for the additional generator $w$ of the affine Weyl group.
\end{remark}

\subsection{Abelianization for Hamiltonian $G$-spaces}
In this section, we consider Hamiltonian $G$-spaces $(M,\omega,\Phi)$ whose moment map 
$\Phi\colon M\to \g^*$ is transverse to $\t^*=(\g^*)^T\subset \g^*$. Simple examples of such spaces include $M=T^*G$, with the $G$-action given by the cotangent lift of the left-action of $G$ on itself, or the regular coadjoint orbits $\O\subset \g^*$.  

\begin{remark}
The paper \cite{wo:cl} gives a classification of all multiplicity free Hamiltonian $G$-spaces
whose moment map is transverse to $\t^*$. For example, every regular coadjoint orbit of $\U(3)$, 
regarded as a Hamiltonian $G=\U(2)$-space under the inclusion $U(2)\to U(3)$ in the upper left corner, is such a space. 
\end{remark}

The transversality assumption implies that  the submanifold 
\[ X=\Phi^{-1}(\t^*),\] 
with the pull-backs $\omega_X,\Phi_X$ of the symplectic form and moment map, becomes a degenerate Hamiltonian $N(T)$-space. Here \emph{degenerate} refers to the fact that the  2-form $\om_X$ is no longer symplectic; its has a non-trivial kernel 
at points $x\in X$ for which $\Phi(x)$ is not regular (i.e., has stabilizer larger than $T$). 
There is a canonical $N(T)$-equivariant trivialization of the normal bundle, 
\[ \nu(M,X)\cong  X\times\ \g/\t\] 
coming from the bundle map $\nu(M,X)\to \nu(\g^*,\t^*)\cong (T\t^*)^\perp=\t^*\times \g/\t$ induced by $\Phi$. 
It has a $T$-equivariant $\Spin_c$-structure from the complex structure 
on $\g/\t$, with associated spinor module $X\times \Sz_{\g/\t}$.  

On the other hand, the choice of a $G$-invariant compatible almost complex structure on $M$ defines a $G$-equivariant spinor module $\Sz_{TM}$, and we obtain a 
$T$-equivariant spinor module for $X$,  
\begin{equation}\label{eq:sz}
\Sz_{TX}=\Hom_{\C l(\g/\t)}(X\times \Sz_{\g/\t},\Sz_{TM}|_X).
\end{equation}
Here the metric on $TX^\perp\cong \nu(M,X)$ induced by the metric on $M$ may be different from that coming from the isomorphism with $X\times \g/\t$, but as explained in Section \ref{subsec:spinor} there is a natural isometric isomorphism relating the two metrics. .

Given a $G$-equivariant  line bundle $L\to M$, the equivariant indices 
of $L|_X\otimes \Sz_{TX}$ and of $L\otimes \Sz_{TM}$ are related by the 
Weyl denominator: 
\begin{equation}\label{eq:sz1}
 \on{index}_G(L\otimes \Sz_{TM})(t)=\f{\on{index}_T(L|_X\otimes \Sz_{TX})(t)}{\prod_{\alpha\in\mf{R}_+}(1-t^{-\alpha}) }
 \end{equation}
for all regular $t\in T$. In particular, the two equivariant indices carry the same information. 
\begin{remark}
The $T$-equivariant index $\chi(t)=\on{index}_T(L|_X\otimes \Sz_{TX})(t)$ has the following transformation property. 
\[ \chi(w^{-1}.t)=(-1)^{l(w)}t^{w\rho-\rho}\chi(t).\] 
This follows one the one hand from the invariance properties of the Weyl denominator in 
\eqref{eq:sz1}, while the left hand side is $W$-invariant. 

Alternatively, it  follows because the $T$-action on the spinor module $\ca{S}_{TX}$, extends to an action of the central extension of $N(T)$, see Section \ref{subsec:sgt}: If $h\in N(T)$ represents $w$, and $\wh{g}$ lifts $g$, then 
\[ \chi(w^{-1}.t)=\chi(t^{w\rho-\rho}\ \wh{g}^{-1} t \wh{g})=
(-1)^{l(w)} t^{w\rho-\rho}\,\chi(t).\]
Here the sign appears because $g$ changes the orientation, and hence $\wh{g}$ changes the parity of the spinor module, by $(-1)^{l(w)}$.

Since the complex structure on $\g/\t$ is not $N(T)$-equivariant, the 
spinor module $\Sz_{TX}$ is not $N(T)$-equivariant. However, we obtain a 
central extension of $N(T)$ (given by implementers for the 
action on $\Sz_{\g/\t}$), and this central extension acts.  This accounts for the fact that the resulting index $\on{index}_T(L_X\otimes \Sz_{TX})\in R(T)$ is \emph{anti-invariant} with respect to the shifted Weyl group action. 
\end{remark}
In practice, the assumption that $\Phi$ is transverse to $\t^*$ is rather 
strong. To make the abelianization procedure work in general, one can use an 
$N(T)$-invariant tubular neighborhood $U\subset \g^*$ of $\t^*$, over which an  
`equivariant Bott element' $\beta$ (the $K$-theory counterpart of a Thom form) is defined. Over $\Phinv(U)$, one then takes the cup product of the $K$-homology class of
$\Spin_c$-structure, with the pull-back of $\beta$; this has a well-defined equivariant index. In the transverse case, one can `integrate over the fibers' and replace 
the $K$-cycle on $\Phinv(U)$ with one on $X=\Phinv(\t^*)$. 

\subsection{Abelianization for Hamiltonian $LG$-spaces}\label{subsec:abel}
Our aim is to carry out a similar abelianization approach for Hamiltonian loop group spaces, with moment maps taking values in $\A$.  This works particularly well in the `transverse case'. Let $\Lambda\subset \t$ be the integral lattice (kernel of the exponential map), with the natural action of  $N(T)$ via the homomorphism to $W$. The semi-direct product 
$\Lambda\rtimes N(T)$ acts on $\t$ via its homomorphism to the affine Weyl group $\Lambda\rtimes W$, that is, 
$(\ell,\,h).\mu= \Ad_h\mu-\ell$. The inclusion $\t\to \P G$ as exponential loops $\gamma(t)=\exp(t\mu)$ 
is equivariant  relative to the homomorphism
\[\Lambda\rtimes N(T)\hra LG\times G,\ \  (\ell,h)\mapsto (\exp(t\ell)h, h).\]
Under the projection $p\colon \P G\to \A$, it descends to an equivariant  inclusion $\t\to \A,\ \ \mu\mapsto \mu\partial t$ as `constant connections', and under $q\colon \P G\to G$ it descends to the standard inclusion $T\to G$. 
\begin{proposition}
For a proper Hamiltonian $LG$-manifold $(\M,\om,\Phi)$, with correspondence diagram \eqref{eq:corrn}, the following are equivalent: 
\begin{itemize}
\item the map $\Phi\colon \M\to \A$ is transverse to $\t\hra \A$,
\item the map $\Phi_M\colon M\to G$ is transverse to $T\hra G$, 
\item the map $\Phi_\N\colon \N\to \P G$ is transverse to $\t\hra \P G$. 
\end{itemize} 
\end{proposition}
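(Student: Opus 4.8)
The plan is to exploit the correspondence diagram \eqref{eq:corrn}, in which $\N$ is simultaneously a principal $G$-bundle over $\M$ (via $p$) and a principal $LG$-bundle over $M$ (via $q$), together with the analogous statements for the path fibration \eqref{eq:corrpg}. The key observation is that all three embeddings in question arise from the embeddings $\t \hookrightarrow \A$, $T \hookrightarrow G$, $\t \hookrightarrow \P G$ by pullback along the moment maps, and that transversality is a fiberwise (pointwise) condition that one can check by comparing tangent spaces modulo the images of the respective moment map differentials. First I would record the elementary fact that for a principal $K$-bundle $\pi\colon P \to B$ and a $K$-invariant submanifold $Z \subset P$ whose image $\pi(Z) \subset B$ is a submanifold with $Z = \pi^{-1}(\pi(Z))$, a $K$-equivariant map $f\colon Q \to P$ is transverse to $Z$ if and only if $\pi \circ f$ is transverse to $\pi(Z)$; this is because the vertical tangent directions lie inside both $T_z Z$ and (after adding) the relevant sum, so they contribute trivially to the transversality condition. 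Applying this with $P = \N$, $\pi = p$, $K = G$, $Z = \Phi_\N^{-1}(\t)$ gives the equivalence of the first and third bullets (noting $\Phi_\N^{-1}(\t) = p^{-1}(\Phi^{-1}(\t))$ and that $p$ is a submersion, so these are submanifolds exactly when $\Phi$ is transverse to $\t$), and applying it with $\pi = q$, $K = LG$ gives the equivalence of the second and third bullets.

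Concretely, the key steps in order are: (i) verify that $\t \hookrightarrow \P G$ is an embedding whose $LG\times G$-saturation has the property that $p$ and $q$ carry it onto $\t \hookrightarrow \A$ and $T \hookrightarrow G$ respectively, with the fibers of $p$ (resp.\ $q$) over these loci being exactly the fibers of the ambient bundle — this uses the explicit slice descriptions from Section \ref{subsec:pathfib} and Remark \ref{rem:4remarks}(c) (the pullback of $q\colon \P G \to G$ under $\exp$ is trivialized by $X \mapsto (t\mapsto \exp(tX))$, so $q^{-1}(T)$ contains $\t$ as a section); (ii) state and prove the bundle transversality lemma above in the Banach setting, being careful that the splitting $T_z\N = \ker T_z p \oplus (\text{horizontal})$ exists (it does, using the connection $\alpha$ from Proposition \ref{prop:splittings}, whose horizontal space is complemented); (iii) chase the definitions $\N = \Phi^*\P G = \Phi_M^*\P G$ to conclude that $\Phi_\N$ is transverse to $\t \hookrightarrow \P G$ iff $\Phi$ is transverse to $\t \hookrightarrow \A$ iff $\Phi_M$ is transverse to $T \hookrightarrow G$.

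The main obstacle I anticipate is bookkeeping of the functional-analytic subtleties: transversality of maps between Banach manifolds requires not just surjectivity of the relevant differential maps onto the normal directions but also that the kernels split (are complemented), which in finite codimension is automatic but still should be checked, and one must confirm that $\Phi^{-1}(\t)$, $\Phi_M^{-1}(T)$, $\Phi_\N^{-1}(\t)$ are genuine submanifolds in the first place — but since $\t$ has finite codimension in $\A$ (it is the fixed-point set of the $T$-action, a closed subspace with a $T$-invariant complement), transversality of $\Phi$ does yield a finite-codimension submanifold $\X = \Phi^{-1}(\t)$, and likewise downstairs. A secondary point to handle cleanly is equivariance: the embedding $\t \hookrightarrow \P G$ is only $\Lambda\rtimes N(T)$-equivariant, not $LG\times G$-equivariant, so strictly the "bundle transversality" statement should be applied to the $LG$-saturation $LG\cdot\t \subset \A$ and the $G$-saturation $G\cdot T = G \subset G$ (all of $G$), which is fine since transversality to a submanifold is equivalent to transversality to its saturation under a group acting by diffeomorphisms preserving it. None of these steps is deep; the content is entirely in organizing the pullback/pushforward along the Morita correspondence correctly.
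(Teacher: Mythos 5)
Your overall strategy (descend the transversality condition along the two legs of the correspondence diagram, using that transversality is unchanged by pre- and post-composition with the bundle submersions) is genuinely different from the paper's proof, which instead restricts everything to the cross-sections $Y_\gamma=\Phi_\N^{-1}(V_\gamma)$ and uses that $p,q$ restrict to diffeomorphisms of cross-sections intertwining the moment maps and the inclusions of $\t$, resp.\ $T$. Your route can be made to work, but as written it contains two concrete errors. First, the identity $\Phi_\N^{-1}(\t)=p^{-1}(\Phi^{-1}(\t))$ is false: writing $\N=\{(m,\gamma):\Phi(m)=\gamma^{-1}\partial\gamma\}$, a point of $p^{-1}(\Phi^{-1}(\t))$ only satisfies $\gamma^{-1}\partial\gamma\in\t$, i.e.\ $\gamma(t)=\gamma(0)\exp(t\mu)$, whereas $\Phi_\N^{-1}(\t)$ requires in addition $\gamma(0)=e$; so $p^{-1}(\Phi^{-1}(\t))=\Phi_\N^{-1}(G\cdot\t)$ is the $G$-saturation of $\Phi_\N^{-1}(\t)$, not $\Phi_\N^{-1}(\t)$ itself. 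Relatedly, your bundle lemma concerns a map $f\colon Q\to P$ into the total space and a saturated $Z\subset P$, but in the application you take $P=\N$ and $Z=\Phi_\N^{-1}(\t)$ with no map into $\N$ present; the lemma should be applied with $P=\P G$, $f=\Phi_\N$, and $Z$ the saturated submanifolds $p^{-1}(\t)=G\cdot\t$ and $q^{-1}(T)=LG\cdot\t$ of $\P G$ (together with the separate, easy fact that precomposition with the surjective submersions $\N\to\M$ and $\N\to M$ does not affect transversality).

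Second, and fatally for the argument as written, the closing assertion that ``transversality to a submanifold is equivalent to transversality to its saturation'' is false, and it fails in the very instance you invoke: the conjugation-saturation of $T$ is $G\cdot T=G$, so transversality to it is vacuous, while transversality of $\Phi_M$ to $T$ is a genuine condition --- for instance the conjugacy class $M=\{e\}$, with $\Phi_M$ the inclusion, is not transverse to $T$ (and correspondingly the orbit $\O=LG\cdot 0$ is not transverse to $\t\subset\A$). With your claim the second bullet would hold automatically, contradicting the proposition. The saturation step needs, at each point $z$ of the small submanifold $Z$, the identity $T_z(K\cdot Z)=T_zZ+T_z(K\cdot z)$, combined with equivariance of the map so that the orbit directions lie in the image of its differential. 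This identity holds for the saturations of $\t\subset\P G$, since $G\cdot\t=p^{-1}(\t)$ and $LG\cdot\t=q^{-1}(T)$ are preimages under the bundle projections and $\t\to\t$, $\exp\colon\t\to T$ are submersions; it fails for $T\subset G$ under conjugation at non-regular elements, where the orbit directions together with $\t$ do not span $\g$. If you redo the argument with all saturations taken upstairs in $\P G$ (and only there), your approach goes through and is a reasonable alternative to the paper's cross-section argument; the functional-analytic points you raise (split kernels, finite codimension) are indeed harmless.
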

\begin{proof}
Let $\mu\in \t$ be given, corresponding to the exponential path  $\gamma(t)=\exp(t\mu)$, and let $n\in \N$ with $\Phi_\N(n)=\gamma$. Let $V_\gamma$ be the slice at $\gamma$, and $Y_\gamma=(\Phi_\N)^{-1}(V_\gamma)\subset \N$ the cross-section. Note that $V_\gamma$ contains an open neighborhood of $\gamma$ inside the image of $\t\hra \P G$.  Hence, by equivariance, 
$\Phi_\N$ is transverse to $\t\hra \P G$ at $n$ if and only if its restriction 
$\Phi_\N|_{Y_\gamma}$ is transverse to $\t$ at $n$. 
Similarly, the transversality of $\Phi_\M$ to $\t$ at $p(n)$, and of $\Phi_M$ to $T$ at $q(n)$, is equivalent to a similar transversality conditions for the restriction to the cross-sections. Since $p$ and $q$ define diffeomorphisms of the cross-sections $Y_{p(\gamma)} \leftarrow Y_\gamma\rightarrow Y_{q(\gamma)}$, intertwining the moment maps and the inclusions of $\t$  resp, of $T$, we conclude that all three transversality conditions are equivalent. 
\end{proof}

\begin{example}
There is an interesting example, due to Woodward,  of a \emph{multiplicity-free} quasi-Hamiltonian $\SU(3)$-spaces, with moment map transverse to the maximal torus $T$. 
Its moment polytope is a triangle, with vertices the mid-points of the edges of the alcove. 
See \cite{me:lec} or \cite{loi:nor} for a description of this space. 
\end{example}

\begin{proposition} Let $(\M,\om,\Phi)$ be a proper Hamiltonian $LG$-space. 
If the moment map $\Phi\colon \M\to \A$ is transverse to the inclusion $\t\hra \A$, then 
the pre-image $\ca{X}=\Phinv(\t)$ becomes a (degenerate) Hamiltonian $\Lambda\rtimes N(T)$-manifold. The choice of positive roots for $(G,T)$ determines a $\Spin_c$-structure on $\ca{X}$, equivariant with respect to 
\[ \wh{\Lambda}^{\on{spin}}\rtimes T\]
where the superscript indicates the central extension obtained by restriction from 
the spin central extension of the loop group.   
\end{proposition}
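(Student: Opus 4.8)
The plan is to run the abelianization of Section~5.2 with $\ol{T}\M$ in place of $TM$ and $\mathbf{L}\g/\t$ in place of $\g/\t$, using Proposition~\ref{prop:subspace} exactly the way \eqref{eq:sz} was used, and absorbing the new analytic input --- the Sobolev gap between $\ol{L}\g$ and $\mathbf{L}\g$, and the jumping of the isotropy $L\g_\mu\cong\g_{\exp\mu}$ --- by the cut-off device of Section~\ref{subsec:twistedgeneral}.

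\emph{Step 1: the abelianized space.} By transversality $\ca{X}=\Phinv(\t)$ is a submanifold; it is finite-dimensional because, by the symplectic cross-section theorem, near $m\in\ca{X}$ with $\mu=\Phi(m)$ the manifold $\M$ is modelled on $LG\times_{LG_\mu}Y$ with $Y=\Phi^{-1}(V_\mu)$ finite-dimensional, and $\ca{X}\cap(LG\cdot Y)$ is a finite union of $\Lambda\rtimes N(T)$-translates of $Y\cap\ca{X}$. The subgroup $\Lambda\rtimes N(T)\subset LG$ preserves $\t\subset\A$ --- for $h\in N(T)$ one has $h\cdot(\mu\,\partial t)=\Ad_h(\mu)\,\partial t$, and for $\exp(t\ell)\in\Lambda$ one has $\exp(t\ell)\cdot(\mu\,\partial t)=(\mu-\ell)\,\partial t$ --- so $\ca{X}$ is $\Lambda\rtimes N(T)$-invariant, and $\omega_{\ca{X}},\ \Phi_{\ca{X}}=\Phi|_{\ca{X}}$ make it a degenerate Hamiltonian $\Lambda\rtimes N(T)$-manifold with moment map into $\t$ (the moment-map condition passes to the restriction, and $\omega_{\ca{X}}$ degenerates along the locus where $\Phi_{\ca{X}}$ is non-regular).

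\emph{Step 2: the completed normal directions.} The differential $\d\Phi$ carries the normal bundle $\nu(\M,\ca{X})=T\M|_{\ca{X}}/T\ca{X}$ isomorphically onto the constant bundle $\ca{X}\times\A/(\t\,\partial t)$, and this is $\Lambda\rtimes N(T)$-equivariant, since the (affine) $\Lambda\rtimes N(T)$-action on $\A$ descends to a \emph{linear} action on $\A/(\t\,\partial t)$, the translational parts $-\ell\,\partial t$ lying in $\t\,\partial t$. Let $(T\ca{X})^{\perp}\subset\ol{T}\M|_{\ca{X}}$ be the $g$-orthogonal complement for the metric $g$ of Theorem~\ref{th:completedtangentbundle}, with $J$ chosen adapted to a symplectic cross-section $Y$ as in the proof of that theorem, so that along $\ol{L}\g_\mu^\perp$ it restricts to $J_\mu=\partial_\mu/|\partial_\mu|$. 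Arguing as in the proof of Theorem~\ref{th:completedtangentbundle} and in Lemma~\ref{lem:intertwines}, $(T\ca{X})^{\perp}$ completes $\nu(\M,\ca{X})$ and carries the metric given fibrewise by $|\partial_\mu|$ on $\ol{L}\g_\mu^\perp$ and by the identity on the finite-dimensional remainder; replacing $|\partial_\mu|$ by $\chi(\partial_\mu)$ for a cut-off $\chi$ as in Section~\ref{subsec:twistedgeneral}, the operators $\chi(\partial_\mu)^{1/2}$ assemble to a $\Lambda\rtimes N(T)$-equivariant bundle isomorphism $(T\ca{X})^{\perp}\cong\ca{X}\times\mathbf{L}\g/\t$ intertwining metrics up to Hilbert--Schmidt equivalence. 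Finally, $J_\mu=\partial_\mu/|\partial_\mu|$ has $+\sqrt{-1}$-eigenspace the span of the positive Fourier modes together with the positive finite roots --- for $\mu$ in the fundamental alcove, and an affine-Weyl translate thereof in general --- so the transported complex structure differs from the one defining $\Sz_{\mathbf{L}\g/\t}$ in Section~\ref{subsec:sgt} only by a finite-rank operator; hence the compression of $J$ to $(T\ca{X})^{\perp}$ is Hilbert--Schmidt equivalent to the positive-affine-root complex structure.

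\emph{Step 3: the spinor module, equivariance, and rigidity.} The restriction $\Sz_{\ol{T}\M}|_{\ca{X}}$ is a spinor module over $\Cl(T\ca{X}\oplus(T\ca{X})^{\perp})$; by Step~2 the two relevant complex structures are Hilbert--Schmidt equivalent, so Proposition~\ref{prop:subspace} gives that
\[ \Sz_{T\ca{X}}:=\Hom_{\Cl(\mathbf{L}\g/\t)}\big(\ca{X}\times\Sz_{\mathbf{L}\g/\t},\ \Sz_{\ol{T}\M}|_{\ca{X}}\big) \]
is a spinor module over $\Cl(T\ca{X})$. For equivariance: $\Sz_{\ol{T}\M}$ is honestly $LG$-equivariant (Theorem~\ref{th:completedtangentbundle}), whereas $\Sz_{\mathbf{L}\g/\t}$ is equivariant only for the central extension of $\Lambda\rtimes N(T)$ by implementers, with the central circle acting with a fixed nonzero weight; by Sections~\ref{subsec:centext} and \ref{subsec:sgt} this central extension is the restriction of the spin central extension $\wh{LG}^{\on{spin}}$ (the constant-$\t$ and $\g$ summands of $\mathbf{L}\g\oplus\g$ are fixed by $\Lambda$ and contribute trivially), and it splits canonically over $T$ since $T$ preserves the complex structure. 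Hence $\Sz_{T\ca{X}}$ carries an action of $\wh{\Lambda}^{\on{spin}}\rtimes T$, with the central circle acting with weight $-1$ in the conventions of Section~\ref{subsec:centext}. The construction uses only $J\in\ca{J}_{\on{res}}(\ol{T}\M,\omega)$ (unique up to homotopy within its polarization class by Theorem~\ref{th:completedtangentbundle}), the cut-off $\chi$ (unique up to homotopy), and the choice of positive roots $\mf{R}_+$, which fixes the positive affine roots $\mf{R}_{\on{aff},+}$ and hence the complex structure of $\Sz_{\mathbf{L}\g/\t}$; homotopic choices yield homotopic spinor modules, so $\Sz_{T\ca{X}}$ depends only on $\mf{R}_+$ up to equivariant homotopy.

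\emph{Main obstacle.} The delicate point is Step~2 over the \emph{non-regular} locus of $\ca{X}$: there $\ker(\partial_\mu)\cong\g_{\exp\mu}$ jumps, the finite-dimensional summand $\g_{\exp\mu}/\t$ moves out of $(T\ca{X})^{\perp}$ at nearby regular points and into $T\ca{X}$, and one must verify that, after the cut-off substitution, the metric and the (compression of the) complex structure still patch to smooth bundle data differing from the model $\ca{X}\times\mathbf{L}\g/\t$ only by Hilbert--Schmidt, respectively finite-rank, operators, with all $\Lambda\rtimes N(T)$-equivariance intact. This is precisely where the cross-section theorem, the device $\chi(\partial_\mu)$, and the bookkeeping of Lemma~\ref{lem:intertwines} carry the weight of the argument.
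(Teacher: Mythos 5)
Your proposal is correct in substance, but it follows a genuinely different route from the one the paper takes --- in fact, it is essentially the alternative construction that the paper only sketches in the remark immediately following the proposition. The paper's own proof does not return to $\Sz_{\ol{T}\M}$ at all: it regards $\ca{X}=\Phi_\N^{-1}(\t)\subset \N$ and restricts the \emph{already constructed} twisted $\Spin_c$-structure $\Sz_{q^*TM}$ of Theorem \ref{th:generalham}, so that all of the infinite-dimensional analysis (the cut-off $\chi(\partial_\mu)$, the Hilbert--Schmidt comparison of metrics in Lemma \ref{lem:intertwines}, the jumping of $\ker\partial_\mu$) has been done once and for all; what remains is purely finite-dimensional, namely the $N(T)$-equivariant splitting $q^*TM|_{\ca{X}}=T\ca{X}\oplus(\ca{X}\times\g/\t)$ furnished by transversality of $\Phi_M$ to $T\subset G$, followed by $\Sz_{T\ca{X}}=\Hom_{\Cl(\ca{X}\times\g/\t)}(\ca{X}\times\Sz_{\g/\t},\,\Sz_{q^*TM}|_{\ca{X}})$, with the dependence on positive roots and the $\wh{\Lambda}^{\on{spin}}\rtimes T$-equivariance read off from $\Sz_{q^*TM}$ and the finite-dimensional module $\Sz_{\g/\t}$ of Section \ref{subsec:sgt}. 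You instead quotient $\Sz_{\ol{T}\M}|_{\ca{X}}$ by the infinite-dimensional Clifford algebra of $\mathbf{L}\g/\t$, which forces you to redo, along $\ca{X}$, the Sobolev/cut-off bookkeeping and the comparison of the cross-section-adapted $J$ with the positive-affine-root polarization (your Step 2 and ``main obstacle''); this is exactly the content of Section \ref{subsec:twistedgeneral} and Lemma \ref{lem:intertwines} transplanted to the abelianized setting, and your finite-rank comparison of $J_\mu=\partial_\mu/|\partial_\mu|$ with the positive-affine-root structure is the right justification for invoking Proposition \ref{prop:subspace}. What each approach buys: yours is self-contained relative to Section \ref{sec:spin} and never needs the quasi-Hamiltonian space $M$ or the correspondence space $\N$, and it makes visible that both the $\Lambda$- and $N(T)$-extensions arise from implementers on $\Sz_{\mathbf{L}\g/\t}$; the paper's route is shorter and cleanly separates the loop-group central extension (carried entirely by $\Sz_{q^*TM}$) from the finite-dimensional $N(T)$-twist (carried by $\Sz_{\g/\t}$), which is why its proof is three lines. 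Two small points to tighten if you keep your version: Proposition \ref{prop:subspace} requires the finite-dimensional summand to be even-dimensional, which holds here since $\dim\ca{X}$ has the parity of $\dim Y-\dim(\g_a/\t)$ for a cross-section $Y$, and the identification of the implementer extension of $\Lambda\rtimes N(T)$ on $\Sz_{\mathbf{L}\g/\t}$ with (the restriction of) $\wh{LG}^{\on{spin}}$ holds on the nose only over $\Lambda$ and $T$; over all of $N(T)$ it differs by the opposite of the extension acting on $\Sz_{\g/\t}$, as the paper's remark records --- your statement is fine because the proposition only claims $\wh{\Lambda}^{\on{spin}}\rtimes T$-equivariance.
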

\begin{proof}
By the transversality condition, the pre-image $X=\Phinv(T)$ is a (degenerate) quasi-Hamiltonian $N(T)$-manifold, while 
$\ca{X}=\Phi^{-1}(\t)\subset \M$ becomes a (degenerate) Hamiltonian $\Lambda\rtimes N(T)$-manifold. We may also regard it as the submanifold  $\ca{X}=\Phi_\N^{-1}(\t)\subset \N$. 

The moment map $\Phi_M$ and any choice of invariant Riemannian metric gives an $N(T)$-equivariant decomposition $TM|_X\cong TX\oplus (X\times \g/\t)$, hence
\[ q^*TM|_{\ca{X}}=T\ca{X}\oplus (\ca{X}\times \g/\t)\]
Using the spinor bundle $\Sz_{\g/\t}$ defined by a system of positive roots, we obtain a spinor bundle, 
\[ \Sz_{T\ca{X}}=\Hom_{\Cl(\ca{X}\times \g/\t)}(\ca{X}\times \Sz_{\g/\t},\Sz_{q^*TM}|_{\ca{X}}), 
\]
hence a \emph{$\Spin_c$-structure} on $\ca{X}$. 
\end{proof}
\begin{remark}
From the proof, we see that the spinor module is actually equivariant for the action of 
a central extension of $\Lambda\rtimes N(T)$ whose restriction to $N(T)$ is the opposite of the spin-central extension for its action on $\Sz_{\g/\t}$. 
\end{remark}
\begin{remark}
In this argument, we used the twisted $\Spin_c$-structure on $q^* TM$ constructed earlier. Alternatively, one can also start out with $\ca{S}_{\ol{T}\M}$. Similar to Section \ref{subsec:twistedgeneral}, one can construct an $\Lambda\rtimes N(T)$-equivariant isometric  isomorphism 
\[ \ol{T}\M=T\ca{X}\oplus \bf{L}\g/\t,\]
and then define $\Sz_{T\ca{X}}=\Hom_{\Cl(\ca{X}\times \bf{L}\g/\t)}(\ca{X}\times \Sz_{\bf{L}\g/\t}, \ \Sz_{\ol{T}\M})$. 
\end{remark}

\begin{remark}
Given a pre-quantum line bundle $\ca{L}$ for $\M$, one obtains a new $\Spin_c$-structure on $\ca{X}$, with spinor module $\Sz_{T\ca{X}}\otimes \ca{L}$, and an associated Dirac operator $\dirac$. This Dirac operator is equivariant with respect to 
$T$, as well as with respect to a spin-central extension of $\Lambda$. Using the commutation relations between these two actions, we can show that the irreducible 
$T$-representations appear with finite multiplicity in the (infinite-dimensional) 
kernel and cokernel of $\dirac$. Details will be given in a forthcoming paper. 
\end{remark}

\subsection{Thickenings}\label{subsec:thickening}
In the non-transverse case, the situation is slightly more complicated. Choose $\eps>0$ so that the map 
\[ T\times \g/\t\cong \nu(G,T)\to G,\ \ (t,\xi)\mapsto t\exp(\xi)\]
(where we identify $\g/\t\cong \t^\perp$) restricts to a tubular neighborhood embedding
\begin{equation}\label{eq:F1}
T\times B_\epsilon(\g/\t)\hra G.
\end{equation} 
Using an $LG\times G$-equivariant principal connection on $\P G\to G$ (for instance, the connection $\beta$ constructed in Section \ref{subsec:conn}), this lifts uniquely to a tubular neighborhood embedding 
\begin{equation}\label{eq:F2}  q^{-1}(T)\times B_\epsilon(\g/\t)\hra \P G
\end{equation}
in such a way that the corresponding Euler vector field on the image of \eqref{eq:F2}
is the horizontal lift of the Euler vector field on the image of \eqref{eq:F1}. See \cite{bur:spl} for the construction of tubular neighborhoods in terms of Euler-like vector fields; the relevant infinite-dimensional techniques can be found in \cite{ab:ma}.
Its composition with the inclusion $\t\to q^{-1}(T)\subset \P G$ as exponential maps
defines a $\Lambda\rtimes N(T)$-equivariant embedding $\t \times B_\epsilon(\g/\t)\hra \P G$, which fits into a commutative diagram
\begin{equation}\label{eq:F3}  
\xymatrix{ \t\times B_\eps(\g/\t)\ar[r]\ar[d]& \P G \ar[d]\\
T \times B_\eps(\g/\t)\ar[r] & G
}
\end{equation}
Since the upper map is transverse to the action of $LG$, the pre-image 
\[ \ca{Y}=\Phi_\N^{-1}(\t\times B_\eps(\g/\t))\subset \N\]
is a smooth finite-dimensional submanifold. We think of it as a thickened version of the possibly singular space $\ca{X}=\Phi_\N^{-1}(\t)\cong \Phi^{-1}(\t)$. Projection to 
$M$ gives an identification 
\[ \ca{Y}/\Lambda\cong Y:=\Phi_M^{-1}(T \times B_\eps(\g/\t))\subset M,\]
an open neighborhood of $X$ in $M$. The restriction of $q^*\Sz_{TM}$ to $\ca{Y}$ 
is a $\wh{\Lambda}^{\on{spin}}\rtimes T$-equivariant spinor bundle, defining a 
$\Spin_c$-structure on $\ca{Y}$.

\section{Twisted loop groups}\label{sec:twisted}
Let $\kappa\in \on{Aut}(G)$ be a Lie group automorphism. We take the 
\emph{$\kappa$-twisted loop group} $L^{(\kappa)}G$ to be the 
group of paths $\lambda\colon \R\to G$ of (local) Sobolev class $s$ with the property $\lambda(t+1)=\kappa(\lambda(t))$. 

Let $\P^{(\kappa)}G$ consist of paths $\gamma\colon \R\to G$ of Sobolev class $s$ such that 
$\gamma(t+1)=a\kappa(\gamma(t))$ for some $a\in G$. The group $G$ acts on 
$\P^{(\kappa)}G$ by multiplication from the left, while 
$L^{(\kappa)} G$ acts as multiplication by the inverse from the right. We will use the notation $G\kappa$ for the group $G$ 
regarded as a $G$-space under the   \emph{$\kappa$-twisted} conjugation action,
$g.a=ga\kappa(g)^{-1}$. Then $\P^{(\kappa)}G$ is a 
$G$-equivariant principal $L^{(\kappa)} G$-bundle over $G\kappa$, with quotient map
$q(\gamma)=\gamma(1) \kappa(\gamma(0))^{-1}$. Let $ \A^{(\kappa)}$ be the space of connection 1-forms on $\R$ of Sobolev class $s-1$, with the property $\mu(t+1)=\kappa(\mu(t))$. The group $L^{(\kappa)}G$ acts on this space by gauge transformations, and the map $p\colon \P^{(\kappa)}G\to \A^{(\kappa)},\ \ \gamma\mapsto \gamma^{-1}\partial\gamma$ is an $L^{(\kappa)}G$-equivariant principal $G$-bundle. We arrive at the correspondence diagram, 
\begin{equation}\label{eq:corrkappa}
 \xymatrix{ & \P^{(\kappa)} G\ar[dl]_p\ar[dr]^q & \\
\mathcal{A}^{(\kappa)} & & G\kappa
}\end{equation}
giving a Morita equivalence of the action groupoids $[\A^{(\kappa)}/L^{(\kappa)}G]$ and $[G\kappa/G]$. Given an $\Ad$-invariant metric on $\g$ that is also $\kappa$-invariant, we obtain an 
$L^{(\kappa)}G\times G$-invariant  
2-form 
$\varpi^{(\kappa)}\in \Omega^2(\P^{(\kappa)}G)$, with the properties 
$\d\varpi^{(\kappa)}=-q^*\eta$, as well as 
\begin{equation}\label{eq:twistmom}
\iota(X_{\PG}) \varpi^{(\kappa)}=-\hh q^*(\theta^L\cdot \kappa(X)+\theta^R\cdot X),\ \ \ \iota(\xi_{\PG})\varpi^{(\kappa)}=p^*\l\d\mu,\xi\r\end{equation}
for $X\in \g,\ \xi\in L^{(\kappa)}\g$. It is given by the explicit formula (see Appendix \ref{app:2form})
\[ \varpi^{(\kappa)}=\hh \int_0^1 \Big(\on{ev}_t^*\theta^R\cdot \f{\p}{\p t}  \on{ev}_t^*\theta^R\Big)\partial t
+ \hh \on{ev}_0^*\kappa(\theta^L)\cdot \on{ev}_1^*\theta^L.
\]
Proper Hamiltonian $L^{(\kappa)}G$-spaces are defined just as in the case of a trivial automorphism (see Definition \ref{def:melo}), replacing $\A$ with $\A^{(\kappa)}$. 
Similarly, quasi-Hamiltonian $G$-spaces with $G\kappa$-valued moment maps 
\cite{boa:twi,me:conv}
are defined 
similar to Definition \ref{def:quas}, but requiring equivariance with respect to the $\kappa$-twisted conjugation action, and replacing the moment map condition by 
\[ \iota(X_M)\omega_M=-\hh \Phi_M^*(\theta^L\cdot \kappa(X)+\theta^R\cdot X),\] 
in accordance with \eqref{eq:twistmom}. 
As before, one has a 1-1 correspondence between proper Hamiltonian $L^{(\kappa)}G$-spaces and quasi-Hamiltonian $G$-spaces with $G\kappa$-valued moment maps, described by a diagram $\M\stackrel{p}{\longleftarrow} \N \stackrel{q}{\longrightarrow} M$, similar to \eqref{eq:corrn}.  Examples of such quasi-Hamiltonian spaces are the twisted conjugacy classes $\Co\subset G\kappa$; the corresponding $L^{(\kappa)}G$-spaces are the coadjoint $L^{(\kappa)}G$-orbits $\O\subset \A^{(\kappa)}$. Here, the symplectic form on coadjoint orbits is given by the same expression \eqref{eq:coadsympl} as in the untwisted case, using that $\partial_\mu\xi_1\cdot\xi_2$ for $\mu\in \A^{(\kappa)}$ and $\xi_i\in L^{(\kappa)}\g$ is a periodic 1-form on $\R$. Further examples may be found in the context of twisted wild character varieties \cite{boa:twi}, twisted moduli spaces of flat connections \cite{me:conv}, and multiplicitity free quasi-Hamiltonian spaces \cite{kno:mul}.

\begin{remark}\label{rem:enough}
For most purposes, it is enough to consider one representative of automorphisms in any given class in $\on{Aut}(G)/\on{Inn}(G)$. (See \cite[Section 3.4]{me:conv}.) Indeed, suppose $\wt{\kappa}=\Ad_h \circ \kappa$ for some $h\in G$. Then the map  
$G\kappa\to G\wt{\kappa},\ \ g\mapsto gh^{-1}$ intertwines the $\kappa$-twisted conjugation action with the $\wt{\kappa}$-twisted conjugation action. 
Similarly, the choice of any 
$\sigma\in \P^{(\kappa)}G$ with $q(\sigma)=h$ defines a group isomorphism 
$L^{(\kappa)}G\to L^{(\wt{\kappa})}G,\ \lambda\mapsto \Ad_{\sigma}\lambda$. 
%
The map 
$\P^{(\kappa)}G\to P^{(\wt{\kappa})}G,\ \gamma\mapsto \gamma\sigma^{-1}$,
is $G$-equivariant and intertwines the actions of $ L^{(\kappa)}G$ and $L^{(\wt{\kappa})}G$. The corresponding map $\A^{(\kappa)}\to \A^{(\wt{\kappa})}$ is given by $\mu\mapsto \Ad_\sigma(\mu)-\partial\sigma \sigma^{-1}$. 
In this way, right multiplication by $h^{-1}$ turns a $G\kappa$-valued moment map into a 
$G\wt{\kappa}$-valued moment map, and the gauge action of $\sigma$ turns an 
$\A^{(\kappa)}$-valued moment map into a $\A^{(\wt{\kappa})}$-valued moment map.
\end{remark}

Given a proper Hamiltonian $L^{(\kappa)}G$-space $\M$, the construction of a spinor module $\Sz_{\ol{T}\M}$ proceeds parallel to the case 
of $\kappa=1$. We will be brief, providing details only where special aspects of the construction arise. We introduce the space $\ol{L}^{(\kappa)}\g$ of $\kappa$-twisted loops of Sobolev class $\f{1}{2}$, and use it to define a completion $\ol{T}\M$ of the tangent bundle, on which the 2-form becomes strongly symplectic. If $\O\subset \A^{(\kappa)}$ is a coadjoint orbit, then the completed tangent bundle $\ol{T}\O$ has a canonical $L^{(\kappa)}G$-invariant compatible complex structure, given by 
the formula \eqref{eq:jmu}. In the general case, we obtain a polarization class of $L^{(\kappa)}G$-invariant compatible complex structures $J$ on $\ol{T}\M$. Up to isomorphism, the resulting  $L^{(\kappa)}G$-equivariant spinor bundle 
$\Sz_{\ol{T}\M}$ is independent of the choice of $J$.

To obtain a twisted $\Spin_c$-structure for the associated quasi-Hamiltonian space (where `twist' refers to the twisting of $K$-theory, rather than to the automorphism $\kappa$ of the twisted loop group), we need  $LG\times G$-equivariant connections $\alpha,\beta$ on the two principal bundles in  \eqref{eq:corrkappa}, in such way that the corresponding vertical projections extend to the completions. Such connections are obtained by the same method as in Section \ref{subsec:conn}; see \cite{me:conv} for a discussion of slices for the twisted conjugation action. As another ingredient, we  need the spin representation of the twisted loop group. Let $\bf{L}^{(\kappa)}\g$ be the 
$\kappa$-twisted loops of Sobolev class $0$, with the 
Hilbert space inner product given by integration over $[0,1]\subset \R$.  The covariant derivative $\partial_0$ with respect to $0\in \A^{(\kappa)}$ is an unbounded skew-adjoint operator on this Hilbert space, its kernel are the constant $\kappa$-twisted loops, that is, elements of  $\g^\kappa$. 
The spectral decomposition of $\partial_0$ defines 
a complex structure on $(\g^\kappa)^\perp\subset \bf{L}^{(\kappa)}\g$; together with the standard complex structure on $\g^\kappa\oplus \g^\kappa$ 
we hence have a complex structure on $\bf{L}^{(\kappa)}\g\oplus \g^\kappa$, and an associated spinor module:
\[ \Cl(\bf{L}^{(\kappa)}\g\oplus \g^\kappa)
\circlearrowright \Sz_{ \bf{L}^{(\kappa)}\g\oplus \g^\kappa }.\]
We obtain a central extension of $L^{(\kappa)}G$ by its map 
to the restricted orthogonal group; its opposite will be called the 
spin-central extension of the twisted loop group $L^{(\kappa)}G$.

Consider the principal bundles $\M\stackrel{p}{\longleftarrow} \N \stackrel{q}{\longrightarrow} M$, obtained by pulling back \eqref{eq:corrkappa} under the respective moment maps, and equipped with the pull-backs of the connections $\alpha,\beta$. We obtain $LG\times G$-equivariant isometric bundle isomorphisms
\[ p^*\ol{T}\M\times (\g\oplus \g)\cong \ol{T}\N \times \g
\cong q^* TM\times( \bf{L}^{(\kappa)}\g\oplus \g)
\]
(using a trivial action on the second $\g$ copy).  Here the second isomorphism is obtained by first using $\beta$ to identify 
$\ol{T}\N \times \g$ with the bundle 
$q^* TM\times ( \ol{L}^{(\kappa)}\g\oplus \g)$, and then using the method from Section 
\ref{subsec:twistedgeneral} to pass to $\bf{L}^{(\kappa)}\g$, in such a way that metrics and  actions are preserved. The first isomorphism gives an 
equivariant spinor bundle 
\[ \Sz_{\ol{T}\N\times  \g}:=p^*\Sz_{\ol{T}\M}\otimes (\N\times \wedge\g^\C)\]
Taking a `quotient', we obtain a  spinor module 
\[q^*\Cl(TM\oplus \g/\g^\kappa) \circlearrowright \Sz_{q^*TM\times \g/\g^\kappa}:=\Hom_{\C l(\bf{L}^{(\kappa)}\g\oplus \g^\kappa)}\big( \Sz_{ \bf{L}^{(\kappa)}\g\oplus \g^\kappa },\ 
\Sz_{\ol{T}\N\times\g}\big);\]
equivalently, we obtain a $\Spin_c$-structure on the bundle
\begin{equation}\label{eq:thebundle}
q^*TM\times \g/\g^\kappa\to \N,
\end{equation}
which is equivariant for the action of $\wh{L^{(\kappa)}G}^{\on{spin}}\times G$.    The presence of the $\g/\g^\kappa$ factor is both natural and convenient.  In fact, quasi-Hamiltonian spaces with $G\kappa$-valued moment maps can be odd-dimensional; examples include the twisted conjugacy classes of $\SU(3)$ with $\kappa$ defined by the standard diagram automorphism. Using the  cross-section theorems from \cite{me:conv}, one finds that the parity of $\dim M$ coincides with that of $\dim \g/\g^\kappa$, hence $q^*TM\times \g/\g^\kappa$ always has even rank. 

We define a $G$-equivariant Dixmier-Douady bundle over $G\kappa$, 
\[ \mathsf{A}^{(\kappa),\on{spin}}=\P^{(\kappa)}G\times_{L^{(\kappa)}G}
\K(\Sz_{\bf{L^{(\kappa)}\g}\oplus \g^\kappa})^{\on{op}}\to G\kappa.\]
The reasoning from Section \ref{sec:morita} gives a $G$-equivariant Morita morphism, 
\[ \Cl(q^*TM\times \g/\g^\kappa)\dasharrow \mathsf{A}^{(\kappa),\on{spin}}.\]
The K-homology fundamental class of $M$ lives in the $G$-equivariant twisted K-homology  of $M$ with coefficients 
in $\Cl(q^*TM\times \g/\g^\kappa)$; taking a tensor product with a pre-quantization, and pushing forward under the moment map as in \cite{me:twi}
gives an element of twisted equivariant $K$-homology of $G\kappa$, at a suitable level. 

For the abelianization procedure, we assume that $G$ is simple and simply connected, and (with no loss of generality, see Remark \ref{rem:enough}) that $\kappa$ is given by a Dynkin diagram automorphism, relative to some choice of maximal torus $T$ and positive roots. Then $\kappa$ preserves $T$; let $T^\kappa$ be the fixed point torus. Every $\kappa$-twisted conjugacy class meets $T^\kappa$; similarly, every coadjoint orbit of $L^{(\kappa)}G$ meets $\t^\kappa\subset \A^{(\kappa)}$ (embedded as constant connections), and every orbit of 
$L^{(\kappa)}G\times G$ on $\P^{(\kappa)}G$ meets $\t^\kappa\subset \P^{(\kappa)}G$ (embedded as exponential paths). 
 
Suppose $\M$ is a proper Hamiltonian $L^{(\kappa)}G$-space whose moment map $\Phi\colon \M\to \A^{(\kappa)}$ is transverse to $\t^{\kappa}\subset \A^{(\kappa)}$. Equivalently, $\Phi_\N$ is transverse to $\t^\kappa\subset \P^{(\kappa)}G$, and $\Phi_M$ is transverse to $T^\kappa\subset G\kappa$.  Then  $\mathcal{X}=\Phi^{-1}(\t^{\kappa})\cong \Phi_\N^{-1}(\t^\kappa)$ is a degenerate Hamiltonian $\Lambda^\kappa\rtimes N_G(T)^{\kappa}$-space, while $X=\Phi_M^{-1}(T^{\kappa})$ is a degenerate quasi-Hamiltonian $N_G(T)^{\kappa}$-space.  We have 
$TM|_X=TX\oplus (X\times \g/\t^\kappa)$, and a similar decomposition for the pull-back to $\ca{X}\subset \N$. Consequently, 
\[ q^*TM|_{\ca{X}}\oplus (\ca{X}\times \g/\g^\kappa)
=T\ca{X}\oplus (\ca{X}\times (\g/\t^\kappa\oplus \g/\g^\kappa)),\]
We obtain a complex structure on $\g/\t^\kappa\oplus \g/\g^\kappa
=\g^\kappa/\t^\kappa\oplus  \g/\g^\kappa\oplus  \g/\g^\kappa$, by taking the 
standard complex structure on $\g/\g^\kappa\oplus  \g/\g^\kappa=
\g/\g^\kappa\otimes \R^2$, and the complex structure on $\g^\kappa/\t^\kappa$ determined by the positive roots. Together with the $\Spin_c$-structure on 
$q^*TM|_{\ca{X}}\oplus (\ca{X}\times \g/\g^\kappa)$, this then determines a 
$\Spin_c$-structure on $\ca{X}$, equivariant under the action of the spin central extension of $\Lambda^\kappa\rtimes N_G(T)^{\kappa}\subset L^{(\kappa)}G$. 
The non-transverse case can be dealt with by a thickening procedure similar to 
Section \ref{subsec:thickening}.

\begin{remark}
The $\Spin_c$-structure on $\ca{X}$ is equivariant for 
the spin-central extension of the full subgroup of $L^{(\kappa)}G$ preserving 
$\t^\kappa\subset \A^{(\kappa)}$. This is  somewhat larger than $\Lambda^\kappa\rtimes N_G(T)^\kappa$. To see this,  let $T_\kappa$ be the range of 
$T\to T,\ h\mapsto f(h):=h\kappa(h)^{-1}$ given by the twisted conjugation action on the group unit $e$. The subtorus $T_\kappa$ is transverse to $T^\kappa$, of complementary dimension; hence $T_\kappa\cap T^\kappa$ is a finite group.  The twisted conjugation action of $h\in T$ preserves $T^\kappa$ if and only if 
$f(h)\in T_\kappa\cap T^\kappa$ (in which case the action is translation by $f(h)$), 
and $N_G^{(\kappa)}(T^\kappa)$ (the subgroup of $G$ whose action on $G\kappa$ preserves $T^\kappa$)  is generated by $N_G(T)^\kappa$ together with 
 $f^{-1}(T_\kappa\cap T^\kappa)$. Accordingly, 
 $N_G^{(\kappa)}(T^\kappa)/T^\kappa$ is a semi-direct product of 
 $W^\kappa=N_G(T)^\kappa/T^\kappa$, with the finite group 
 $T^\kappa\cap T_\kappa$. In a similar fashion, the subgroup of $L^{(\kappa)}G$ preserving $\t^\kappa\subset \A^{(\kappa)}$ is generated by $\Lambda^\kappa\rtimes N_G(T)^\kappa$, together with paths of the form $\lambda(t)=h\exp(tX)$ with 
$X\in \t^\kappa$ such that $f(h)\exp_T(X)=e$. 
Letting $\Lambda^{(\kappa)}=\exp_{T^\kappa}^{-1}(T_\kappa\cap T^\kappa)\subset \t^\kappa$, 
the resulting transformation group of $\t^\kappa$ is $\Waff^{(\kappa)}=\Lambda^{(\kappa)}\rtimes W^\kappa$. (This is the Weyl group of the twisted affine Kac-Moody algebra corresponding to $L^{(\kappa)}G$.) 
\end{remark}

\begin{appendix}

\section{Spaces of compatible complex structures}\label{app:contractible}
Recall the setup of Section \ref{subsec:compatiblecs}: $\mathsf{H}$ is a real Hilbert space with inner product $g$, (strong) symplectic structure $\omega$, and complex structure $J$ such that
\[ g(v,w)=\omega(v,Jw).\]
For $A \in \mathbb{B}(\mathsf{H})$, let $A^\ast$ denote the transpose of $A$ with respect to $g$.  The polar decomposition
\[ A=RP, \qquad P=|A|=\sqrt{A^\ast A}, \qquad R=A|A|^{-1}\]
leads to a contraction
\[ A_t:=RP^t, \qquad t \in [0,1]\]
of the space of invertible elements of $\mathbb{B}(\mathsf{H})$ onto the orthogonal group $\on{O}(\mathsf{H})$.  The following is well-known.
\begin{proposition}
The contraction $t \mapsto A_t$ restricts to a contraction of $\on{Sp}(\mathsf{H})$ onto $\on{U}_J(\mathsf{H})$.
\end{proposition}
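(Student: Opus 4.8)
The plan is to observe that the contraction $t\mapsto A_t=RP^t$ of the invertibles of $\mathbb{B}(\mathsf{H})$ onto $\on{O}(\mathsf{H})$ is already available, so it suffices to prove two things: that $A_t\in\on{Sp}(\mathsf{H})$ for all $t\in[0,1]$ whenever $A\in\on{Sp}(\mathsf{H})$, and that $A_0\in\on{U}_J(\mathsf{H})$. The fact that $A_t\equiv A$ when $A\in\on{U}_J(\mathsf{H})$, and the continuity in $A$ and $t$, are then inherited from the ambient contraction. The first ingredient I would record is that $\on{Sp}(\mathsf{H})\cap\on{O}(\mathsf{H})=\on{U}_J(\mathsf{H})$. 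Rewriting the compatibility condition as $\omega(v,w)=g(Jv,w)$ (which follows from $g(v,w)=\omega(v,Jw)$ together with $J^\ast=-J$), an operator $A$ lies in $\on{Sp}(\mathsf{H})$ iff $A^\ast JA=J$ and in $\on{O}(\mathsf{H})$ iff $A^\ast A=I$; combining the two gives $AJ=JA$, while conversely if $A\in\on{Sp}(\mathsf{H})$ commutes with $J$ then $g(Av,Aw)=\omega(Av,JAw)=\omega(Av,AJw)=\omega(v,Jw)=g(v,w)$, so $A\in\on{O}(\mathsf{H})$.

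Next I would show that $\on{Sp}(\mathsf{H})$ is stable under transpose: from $A^\ast JA=J$ we get $A^\ast=JA^{-1}J^{-1}$ (using $J^2=-I$), hence $AJA^\ast=AJ\cdot JA^{-1}J^{-1}=AJ^2A^{-1}J^{-1}=-J^{-1}=J$, so $A^\ast\in\on{Sp}(\mathsf{H})$. Since $\on{Sp}(\mathsf{H})$ is a group, it contains $P^2=A^\ast A=|A|^2$, i.e.\ $P^2JP^2=J$; as $P^2$ is self-adjoint this says precisely that conjugation by $J$ inverts $P^2$, namely $JP^2J^{-1}=(P^2)^{-1}$. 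Because $A$ is a bounded invertible operator for the fixed metric $g$ (indeed $A^{-1}=J^{-1}A^\ast J$), $P^2$ is positive self-adjoint with spectrum contained in a compact interval $[c,C]$, $0<c\le C$, so its real powers $P^t=(P^2)^{t/2}$ are defined by the continuous functional calculus and satisfy $JP^tJ^{-1}=(JP^2J^{-1})^{t/2}=(P^2)^{-t/2}=P^{-t}$ for all $t\in\R$. Therefore $P^tJP^t=J$, and since $P^t$ is self-adjoint, $P^t\in\on{Sp}(\mathsf{H})$. It follows that $R=AP^{-1}\in\on{Sp}(\mathsf{H})$ and $A_t=RP^t\in\on{Sp}(\mathsf{H})$ for all $t$; moreover $A_0=R\in\on{Sp}(\mathsf{H})\cap\on{O}(\mathsf{H})=\on{U}_J(\mathsf{H})$, and when $A\in\on{U}_J(\mathsf{H})\subset\on{O}(\mathsf{H})$ we have $P=I$ and $A_t\equiv A$. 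This is the asserted deformation retraction.

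The only step that genuinely uses the infinite-dimensional setting — and the one I would treat most carefully — is the functional-calculus identity $Jf(P^2)J^{-1}=f(JP^2J^{-1})$ applied to $f(x)=x^{t/2}$, together with the norm-continuity of $t\mapsto P^t$. Both are routine once one knows that $P^2=A^\ast A$ is bounded and bounded below, so that $\on{spec}(P^2)\subset[c,C]$ with $0<c\le C$ and $x\mapsto x^{t/2}$ is continuous there, and that conjugation by the bounded invertible operator $J$ is an algebra automorphism of $\mathbb{B}(\mathsf{H})$ compatible with the continuous functional calculus. The remainder of the argument is formally identical to the finite-dimensional case.
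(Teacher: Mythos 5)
Your proof is correct and follows essentially the same route as the paper: polar decomposition, the relation $JA^\ast AJ^{-1}=(A^\ast A)^{-1}$ to show the fractional powers $P^t$ are symplectic, and the identification $\on{O}(\mathsf{H})\cap\on{Sp}(\mathsf{H})=\on{U}_J(\mathsf{H})$. The only (harmless) variation is that you obtain $P^t\in\on{Sp}(\mathsf{H})$ for all real $t$ directly from the continuous functional calculus, whereas the paper first treats rational $t$ and then invokes norm-closedness of $\on{Sp}(\mathsf{H})$; you also spell out the transpose-stability of $\on{Sp}(\mathsf{H})$ and the intersection identity, which the paper takes as known.
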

\begin{proof}
Note that $A \in \on{Sp}(\mathsf{H})$ implies $A^\ast JA=J$.  In particular $A^\ast \in \on{Sp}(\mathsf{H})$, hence $A^\ast A \in \on{Sp}(\mathsf{H})$.  Taking fractional powers of the equation $JA^\ast AJ^{-1}=(A^\ast A)^{-1}$ shows that $|A|^t \in \on{Sp}(\mathsf{H})$ for all $t \in \mathbb{Q}$, hence for all $t \in \mathbb{R}$ by norm-closedness of $\on{Sp}(\mathsf{H})$.  Setting $t=1$ shows that $R=A|A|^{-1} \in \on{O}(\mathsf{H})\cap \on{Sp}(\mathsf{H})=\on{U}_J(\mathsf{H})$.
\end{proof}

The contraction descends to the quotient, giving a contraction of $\ca{J}(\mathsf{H},\omega)$ to a point.

\begin{proposition}
The contraction $t \mapsto A_t$ restricts to a contraction of $\on{Sp}_{res}(\mathsf{H})$ onto $\on{U}_J(\mathsf{H})$.
\end{proposition}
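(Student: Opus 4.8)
The plan is to bootstrap from the preceding proposition. Its proof already supplies the purely algebraic facts we need: $A_t = RP^t \in \on{Sp}(\mathsf{H})$ for every $t \in [0,1]$, $A_1 = R = A|A|^{-1} \in \on{O}(\mathsf{H}) \cap \on{Sp}(\mathsf{H}) = \on{U}_J(\mathsf{H})$, and $A_t = A$ whenever $A \in \on{U}_J(\mathsf{H})$ (since then $|A| = I$). What remains is (i) to check that each factor, hence $A_t$, lies in the \emph{smaller} group $\on{Sp}_{res}(\mathsf{H})$, and (ii) to verify that $(A,t)\mapsto A_t$ is continuous for the finer topology given by $\|A\|_J = \|A\| + \|[J,A]\|_{HS}$. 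Both points reduce to understanding fractional powers of the positive operator $B := A^\ast A = |A|^2$.

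First I would observe that $B \in \on{Sp}_{res}(\mathsf{H})$. Indeed, $A^\ast J A = J$ gives $A^\ast = \Ad_J(A^{-1})$; since $\Ad_J$ is an automorphism of $\mathbb{B}(\mathsf{H})$ preserving the Hilbert--Schmidt ideal and satisfying $[J, \Ad_J X] = \Ad_J[J,X]$, and since $\on{Sp}_{res}(\mathsf{H})$ is a group, we get $A^\ast \in \on{Sp}_{res}(\mathsf{H})$ and hence $B = A^\ast A \in \on{Sp}_{res}(\mathsf{H})$; in particular $[J,B] \in \mathbb{B}_{HS}(\mathsf{H})$, and $B$ satisfies $mI \le B \le MI$ for some $0 < m \le M$.

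The key lemma I would prove is: for such a $B$ and for $0 \le s < 1$, the commutator $[J, B^s]$ lies in $\mathbb{B}_{HS}(\mathsf{H})$. The tool is the integral representation
\[ B^s = \frac{\sin \pi s}{\pi}\int_0^\infty \lambda^{s-1}\, B(B+\lambda)^{-1}\, d\lambda, \]
together with the identity $[J, B(B+\lambda)^{-1}] = \lambda\,(B+\lambda)^{-1}[J,B](B+\lambda)^{-1}$ (obtained by writing $B(B+\lambda)^{-1} = I - \lambda(B+\lambda)^{-1}$ and differentiating the resolvent), which yields
\[ \|[J, B^s]\|_{HS} \le \frac{|\sin\pi s|}{\pi}\,\|[J,B]\|_{HS}\int_0^\infty \frac{\lambda^{s}}{(m+\lambda)^{2}}\, d\lambda, \]
the last integral being finite since $-1 < s < 1$. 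Applying this with $s = \tfrac{1}{2}$ shows $P = B^{1/2} \in \on{Sp}_{res}(\mathsf{H})$, hence $P^{-1}\in \on{Sp}_{res}(\mathsf{H})$ (the group property) and $R = AP^{-1} \in \on{Sp}_{res}(\mathsf{H})$; applying it with $s = \tfrac{t}{2} \in [0,\tfrac{1}{2}]$ shows $P^t \in \on{Sp}_{res}(\mathsf{H})$; therefore $A_t = RP^t \in \on{Sp}_{res}(\mathsf{H})$, which together with the preceding proposition settles (i).

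For (ii), continuity in the operator-norm part of $\|\cdot\|_J$ is classical functional calculus (using that $B$ stays bounded below away from $0$ on a neighborhood), so the only genuine work is continuity of $A \mapsto [J, |A|^t]$ into $\mathbb{B}_{HS}(\mathsf{H})$, jointly in $t$. For this I would again use the integral formula: if $B, C$ are two such positive operators, expanding the difference of the corresponding integrands via the resolvent identity $(B+\lambda)^{-1} - (C+\lambda)^{-1} = (B+\lambda)^{-1}(C-B)(C+\lambda)^{-1}$ bounds $\|[J,B^{t/2}] - [J,C^{t/2}]\|_{HS}$ by a finite sum of terms, each a product of $\|[J,B]\|_{HS}$ or $\|[J,B-C]\|_{HS}$ with operator-norm factors and an $L^1(d\lambda)$ weight $\lambda^{t/2}/(m_0+\lambda)^2$ that can be chosen uniformly for $B,C$ in a fixed small $\|\cdot\|_J$-ball around a given point; since $A \mapsto B = A^\ast A$ is itself $\|\cdot\|_J$-continuous, and the dependence on $t$ enters only through the continuous factor $\tfrac{\sin\pi s}{\pi}\lambda^s$, this gives joint continuity of $(A,t)\mapsto P^t$, whence $A_t = (AP^{-1})P^t$ is continuous. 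I expect the main obstacle to be exactly this uniformity of the resolvent integrals as $A$ varies; once that is in place, everything else is bookkeeping inside the Hilbert--Schmidt ideal.
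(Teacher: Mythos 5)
Your proposal is correct and follows essentially the same strategy as the paper: reduce everything to showing that $[J,|A|^{t}]$ is Hilbert--Schmidt by writing the fractional power as an integral of resolvents, commuting $J$ through via the resolvent identity, and bounding the Hilbert--Schmidt norm by an integrable weight, with the same estimates handling continuity in the norm $\|\cdot\|_J$. The only cosmetic differences are that the paper uses the holomorphic (contour) integral for $P^t=|A|^t$ and obtains $[J,P]\in\mathbb{B}_{HS}(\mathsf{H})$ directly from the factorization $[J,A_t]=R[J,P^t]$ with $R\in\on{U}_J(\mathsf{H})$, whereas you use the real-axis Balakrishnan formula for $B^{s}$ with $B=A^{\ast}A$, reaching $[J,B]\in\mathbb{B}_{HS}(\mathsf{H})$ through $A^{\ast}=JA^{-1}J^{-1}\in\on{Sp}_{res}(\mathsf{H})$.
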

\begin{proof}
Let $A \in \on{Sp}_{res}(\mathsf{H})$ have polar decomposition $A=RP$.  Since $R \in \on{U}_J(\mathsf{H})$,
\begin{equation} 
\label{JCommutator}
[J,A_t]=R[J,P^t].
\end{equation}
Extend $P$ complex-linearly to $\mathsf{H}^{\mathbb{C}}$.  The spectrum of $P$ is a compact subset of $(0,\infty)$.  Choose a simple closed contour $\Gamma$ contained in $\mathbb{C} \setminus (-\infty,0]$ and containing the spectrum of $P$.  Then
\[ P^t=\frac{1}{2\pi i}\int_\Gamma z^t (z-P)^{-1}dz,\]
where $z^t=e^{t\log(z)}$ (branch cut along $(-\infty,0]$), hence
\[ [J,P^t]=\frac{1}{2\pi i}\int_\Gamma z^t(z-P)^{-1}[J,P](z-P)^{-1}dz.\]
Taking Hilbert-Schmidt norms
\[ \|[J,P^t]\|_{HS} \le \frac{1}{2\pi}\int_\Gamma |z^t|\cdot \|(z-P)^{-1}\|^2\|[J,P]\|_{HS}|dz| \]
shows that $\|[J,P^t]\|_{HS} < \infty$.  By equation \eqref{JCommutator}, $A_t \in \on{Sp}_{res}(\mathsf{H})$ for $t \in [0,1]$.  Using similar arguments one shows that $(t,A) \mapsto A_t$ is continuous with respect to the norm $\|-\|_J=\|-\|+\|[J,-]\|_{HS}$.
\end{proof}

\begin{remark}
By Kuiper's theorem, $\on{U}(\mathsf{H})$ is contractible in the norm topology.  Thus $\on{Sp}(\mathsf{H})$ and $\on{Sp}_{res}(\mathsf{H})$ are contractible.  For $\on{Sp}(\mathsf{H})$, this and related results can be found in \cite{swa:ls}.
\end{remark}

\section{Central extension of the loop group}\label{app:central}
Let $G$ be a compact, connected Lie group, with Lie algebra $\g$. Let $L_{\on{pol}}\g=\g[z,z^{-1}]\subset L\g^\C$ be the Lie algebra of Laurent loops. The choice of invariant metric $\cdot$ on $\g$ determines a central extension 
$\wh{ L_{\on{pol}}\g}=L_{\on{pol}}\g\oplus \C$, with bracket 
\[ [(\xi_1,t_1),(\xi_2,t_2)]=\Big([\xi_1,\xi_2],\ \tpi \int_{S^1} \d\xi_1\cdot \xi_2\Big).\]
If $G$ is simple and simply connected, then any invariant metric is a multiple of the \emph{basic inner product}. The multiple is a real number $k$ called the \emph{level} of the central extension. One knows that the Lie algebra extension exponentiates to a Lie group extension by $\C^\times$ if and only if $k\in\Z$. 

Let ${L_{\on{pol}}\g}={ L_{\on{pol}}\g}_+\oplus \g^{\C}\oplus {L_{\on{pol}}\g}_-$ be the triangular decomposition defined by the Fourier modes. We obtain a representation of $\Cl({L_{\on{pol}}\g})$ on 
\[\ca{R}=\Cl({L_{\on{pol}}\g})/\Cl({L_{\on{pol}}\g}){L_{\on{pol}}\g}_+\cong \Cl(\g)\otimes \wedge {L_{\on{pol}}\g}_-.\]  
The adjoint action of $L_{\on{pol}}\g$ gives a Lie algebra homomorphism $\xi\mapsto \ad_\xi,\ L_{\on{pol}}\g\to \mf{o}(L_{\on{pol}}\g)$, acting on $\Cl(L_{\on{pol}}\g)$ by derivations. It turns out to be impossible to lift to a Lie algebra morphism $L_{\on{pol}}\g\to \Cl(L_{\on{pol}} \g),\ \xi\mapsto \gamma(\xi)\in \Cl(L_{\on{pol}}\g)$ with 
\begin{equation}\label{eq:cet}
\ad_\xi(x)=[\gamma(\xi),x],\ \ \ x\in\Cl(L_{\on{pol}} \g).
\end{equation} 
However, it turns out that one can define $\gamma(\xi)$ as operators on
$\ca{R}$, satisfying \eqref{eq:cet} but with
\[ [\gamma(\xi),\gamma(\zeta)]=\gamma([\xi,\zeta])+\psi_{KP}(\xi,\zeta),\]
where $\psi_{KP}$ is the \emph{Kac-Peterson cocycle} \cite{kac:sp}
\[ \psi_{KP}(\xi,\zeta)=\hh \on{Res}_{z=0}B^{\on{kil}}\left(\frac{\p \xi}{\p
z},\zeta \right)=\f{1}{4\pi i}\int_{S^1} B^{\on{kil}}(\d \xi,\zeta)\]
and $B^{\on{kil}}(\xi_1,\xi_2)=\on{tr}(\ad_{\xi_1}\ad_{\xi_2})$ is the
Killing form on $\g$.  If $\g$ is simple, the Killing form is related to
the basic inner product by $B^{\on{kil}}=-8\pi^2 \cox B^{\on{basic}}$; hence
we obtain
\[ \psi_{KP}(\xi,\zeta)=\tpi \ \cox \int_{S^1} B^{\on{basic}}(\d \xi,\zeta).
\]
This verifies that the spin central extension of the loop group is at
level the dual Coxeter number.

\section{The 2-form $\varpi$}\label{app:2form}
The 2-form $\varpi\in \Omega^2(\P G)$ is given in terms of the evaluation map 
$\on{ev}_s\colon \P G\to G$ as follows, 
\[ \varpi=\hh \int_0^1 \Big(\on{ev}_t^*\theta^R\cdot \f{\p}{\p t}  \on{ev}_t^*\theta^R \Big)\partial t
+ \hh \on{ev}_0^*\theta^L\cdot \on{ev}_1^*\theta^L
\]
Letting $\A_\lambda$ denote the action of $\lambda\in LG$ on $\P G$, we have 
\[ \on{ev}_t\circ \A_\lambda=r_{\lambda(t)^{-1}}\circ \on{ev}_t,\]
where $r_a$ denotes right multiplication by $a\in G$. It is thus follows that the 
$\g$-valued 1-forms $\on{ev}_t^*\theta^R$ are $LG$-invariant: 
\[ \A_\lambda^* \on{ev}_t^*\theta^R=\on{ev}_t^*\,r_{\lambda(t)^{-1}}^*\theta^R
=\on{ev}_t^*\theta^R.\]
On the other hand, under the action of $G$ by left multiplication, 
$\on{ev}_t^*\theta^R$ transforms by the adjoint action. Hence $\varpi$ is $LG\times G$-invariant. The exterior differential of $\varpi$ is calculated by integration by parts,
similar to \cite[Appendix A]{al:mom}  and is given by 
\[ \d\varpi=\ev_0^*\eta-\ev_1^*\eta+\hh \d \big(\on{ev}_0^*\theta^L\cdot \on{ev}_1^*\theta^L\big)=-q^*\eta,\]
where we used $q(\gamma)=\gamma_1\gamma_0^{-1}$. 
Since $\on{ev}_t$ intertwines the left-action on $\PG$ with that on $G$, we have that
$X_{\P G}\sim_{\ev_t}-X^R$. 
Consequently, at $\gamma\in \PG$,
\begin{align*}
\iota(X_{\PG})\varpi&=-\hh X\cdot(\gamma_1^*\theta^R-\gamma_0^*\theta^R)
-\hh \on{Ad}_{\gamma_0^{-1}}X\cdot \gamma_1^*\theta^L
+\hh \on{Ad}_{\gamma_1^{-1}}X\cdot \gamma_0^*\theta^L\\
&=\hh X\cdot\big( (\gamma_1\gamma_0^{-1})^*\theta^L-
(\gamma_0\gamma_1^{-1})^*\theta^L
   \big)\\
&=-q^* \big(\hh X\cdot (\theta^L+\theta^R)\big).    
\end{align*}
On the other hand, for $\xi\in L\g$ we have that $\xi_{\P G}\sim_{\ev_t}\xi_t^L$. 
Therefore, at $\gamma\in \PG$,
\begin{align*}
\iota(\xi_{\P G})\varpi&=\hh \int_0^1 \Big(\Ad_{\gamma_t}\xi_t\cdot \f{\p}{\p t} \gamma_t^*\theta^R-\gamma_t^*\theta^R\cdot \f{\p}{\p t} \Ad_{\gamma_t}\xi_t\Big)\partial t +\hh \xi(0)\cdot\big( \gamma_1^*\theta^L-\gamma_0^*\theta^L\big)
\\
&=\int_0^1 \Big(\xi_t\cdot \Ad_{\gamma_t^{-1}}\f{\p}{\p t} \gamma_t^*\theta^R\Big)\partial t= p^* \d \l\mu,\xi\r
\end{align*}
where the last equality used
\[ \Ad_{\gamma_t^{-1}}\f{\p}{\p t} \gamma_t^*\theta^R=
\d(\gamma_t^{-1}\dot{\gamma}_t ).\]
In the presence of an automorphism $\kappa$, we define $\varpi^{(\kappa)}\in 
\Omega^2(\P^{(\kappa)}G)$ as follows, 
\[ \varpi^{(\kappa)}=\hh \int_0^1 \Big(\on{ev}_t^*\theta^R\cdot \f{\p}{\p t}  \on{ev}_t^*\theta^R\Big)\partial t
+ \hh \on{ev}_0^*\kappa(\theta^L)\cdot \on{ev}_1^*\theta^L.
\]
A calculation similar to the above shows $\d\varpi^{(\kappa)}=-q^*\eta$, and 
\[ \iota(X_{\PG})\varpi^{(\kappa)}=-q^* \big(\hh 
(\kappa(X)\cdot \theta^L+X\cdot \theta^R)\big),\ \ \ 
\iota(\xi_{\PG})\varpi^{(\kappa)}=p^*\l\d\mu,\xi\r
.    \]
\end{appendix}

\def\cprime{$'$} \def\polhk#1{\setbox0=\hbox{#1}{\ooalign{\hidewidth
  \lower1.5ex\hbox{`}\hidewidth\crcr\unhbox0}}} \def\cprime{$'$}
  \def\cprime{$'$} \def\cprime{$'$} \def\cprime{$'$} \def\cprime{$'$}
  \def\polhk#1{\setbox0=\hbox{#1}{\ooalign{\hidewidth
  \lower1.5ex\hbox{`}\hidewidth\crcr\unhbox0}}} \def\cprime{$'$}
  \def\cprime{$'$} \def\cprime{$'$} \def\cprime{$'$} \def\cprime{$'$}
\providecommand{\bysame}{\leavevmode\hbox to3em{\hrulefill}\thinspace}
\providecommand{\MR}{\relax\ifhmode\unskip\space\fi MR }
\providecommand{\MRhref}[2]{%
  \href{http://www.ams.org/mathscinet-getitem?mr=#1}{#2}
}
\providecommand{\href}[2]{#2}


\begin{thebibliography}{10}

\bibitem{ab:ma}
R.~Abraham, J.~Marsden, and T.~Ratiu, \emph{Manifolds, tensor analysis and
  applications}, Addison-Wesley, Reading, 1983.

\bibitem{al:mom}
A.~Alekseev, A.~Malkin, and E.~Meinrenken, \emph{{L}ie group valued moment
  maps}, J.~Differential Geom. \textbf{48} (1998), no.~3, 445--495.

\bibitem{al:ddd}
A.~Alekseev and E.~Meinrenken, \emph{{D}irac structures and {D}ixmier-{D}ouady
  bundles}, International Mathematics Research Notices \textbf{2012} (2012),
  no.~4, 904 -- 956.

\bibitem{al:ati}
A.~Alekseev, and E.~Meinrenken, \emph{The {A}tiyah algebroid of the path
  fibration over a {L}ie group}, Lett.~ Math.~ Phys.~ \textbf{90} (2009),
  23--58.

\bibitem{ara:bo}
H.~Araki, \emph{Bogoliubov automorphisms and {F}ock representations of
  canonical anticommutation relations}, Operator algebras and mathematical
  physics ({I}owa {C}ity, {I}owa, 1985), Contemp. Math., vol.~62, Amer. Math.
  Soc., Providence, RI, 1987, pp.~23--141.

\bibitem{boa:twi}
P.~Boalch and D.~Yamakawa, \emph{Twisted wild character varieties}, Preprint,
  arXiv:1512.08091.
  
\bibitem{btw:kir}
R.~Bott, S.~Tolman, and J.~Weitsman, \emph{Surjectivity for {H}amiltonian loop group spaces}, Invent. Math. \textbf{155} (2004), no.~2, 225--251.

\bibitem{bur:int}
H.~Bursztyn, M.~Crainic, A.~Weinstein, and C.~Zhu, \emph{Integration of twisted
  {D}irac brackets}, Duke Math.~J. \textbf{123} (2004), no.~3, 549--607.

\bibitem{bur:spl}
H.~Bursztyn, H.~Lima, and E.~Meinrenken, \emph{{Splitting theorems for Poisson
  and related structures}}, J. Reine Angew.~Math.~ (to appear),
  arXiv:1605.05386.

\bibitem{cab:dir}
A.~Cabrera, M.~Gualtieri, and E.~Meinrenken, \emph{{Dirac geometry of the
  holonomy fibration}}, arXiv:1508.06168.
  
\bibitem{che:ma}
P.~Chernoff, J.~Marsden, \emph{Properties of infinite dimensional {H}amiltonian systems}, 		Springer-Verlag, 1974.

\bibitem{esh:ex}
A.~Eshmatov, \emph{A new example of a group-valued moment map}, Journal of Lie
  Theory \textbf{19} (2009), 395--407.
  
\bibitem{fre:geo}
D.~Freed, \emph{The geometry of loop groups}, J. Diff. Geom. \textbf{28} (1988), no.~2, 223--276.

\bibitem{fr:lo2}
D.~Freed, M.~Hopkins, and C.~Teleman, \emph{Loop groups and twisted
  {$K$}-theory {II}}, J. Amer. Math. Soc. \textbf{26} (2013), no.~3, 595--644.

\bibitem{fre:rea}
P.~Frejlich and I.~Marcut, \emph{{On dual pairs in Dirac geometry}}, 2016,
  arXiv:1602.02700.

\bibitem{hu:imp}
J.~Hurtubise, L.~Jeffrey, and R.~Sjamaar, \emph{Group-valued implosion and
  parabolic structures}, Amer.~J.~Math. \textbf{128} (2006), no.~1, 167--214.

\bibitem{kac:sp}
V.~Kac and D.~Peterson, \emph{Spin and wedge representations of
  infinite-dimensional {L}ie algebras and groups}, Proc.~Nat.~Acad.~Sci.~U.S.A.
  \textbf{78} (1981), no.~6, part 1, 3308--3312.

\bibitem{kno:mul}
F.~Knop, \emph{Multiplicity free quasi-hamiltonian manifolds}, Preprint (2016).

\bibitem{lan:cha}
G.~Landweber and R.~Sjamaar, \emph{{Character formulae and {GKRS} multiplets in
  equivariant K-theory}}, Selecta Math. (N.S.) \textbf{19} (2013), no.~1,
  49--95.

\bibitem{loi:nor}
Y.~Loizides, \emph{Norm-square localization for Hamiltonian {$LG$}-spaces}, J.~
  Geom.~ and Phys.~ \textbf{114} (2017), 420--449.

\bibitem{mat:fra}
V.~Mathai, R.~Melrose, and I.~Singer, \emph{{Fractional analytic index}},
  J.~Differential Geom.~ \textbf{74} (2006), 265--292.

\bibitem{me:conv}
E.~Meinrenken, \emph{Convexity for twisted conjugation}, Mathematical Research
  Letters, (to appear), arXiv:1512.09000.

\bibitem{me:lec}
\bysame, \emph{Lectures on pure spinors and moment maps}, Poisson geometry in
  mathematics and physics, Contemp.~Math., vol. 450, Amer.~Math.~Soc.,
  Providence, RI, 2008, pp.~199--222.

\bibitem{me:conj}
E.~Meinrenken, \emph{On the quantization of conjugacy classes}, L'Enseignement
  Math\'ematique \textbf{55} (2009), 33--75.

\bibitem{me:twi}
\bysame, \emph{Twisted {$K$}-homology and group-valued moment maps},
  International Mathematics Research Notices \textbf{2012 (20)} (2012),
  4563--4618.

\bibitem{me:lo}
E.~Meinrenken and C.~Woodward, \emph{{H}amiltonian loop group actions and
  {V}erlinde factorization}, J.~Differential Geom. \textbf{50} (1999),
  417--470.

\bibitem{me:can}
\bysame, \emph{Canonical bundles for {H}amiltonian loop group manifolds},
  Pacific J.~Math. \textbf{198} (2001), no.~2, 477--487.

\bibitem{mor:pro}
A.~Morris, \emph{Projective representations of finite reflection groups.
  {III}}, Comm. Algebra \textbf{32} (2004), no.~7, 2679--2694.

\bibitem{mur:ger}
M.~Murray and M.~Singer, \emph{Gerbes, {C}lifford modules and the index
  theorem}, Ann. Global Anal. Geom. \textbf{26} (2004), no.~4, 355--367.
  
\bibitem{nee:bo}
K.K.~Neeb, \emph{Borel-{W}eil theory for loop groups}, Infinite {D}imensional {K}{\"a}hler {M}anifolds, Springer, 2001, pp. 179--229.

\bibitem{par:rr}
P-E.~Paradan, \emph{Localization of the {R}iemann-{R}och character}, J. Fun. Anal. \textbf{187} (2001), no.~2, 442--509.

\bibitem{ply:sp}
R.J. Plymen and P.L. Robinson, \emph{Spinors in {H}ilbert space}, Cambridge
  University Press, 1994.

\bibitem{pr:lo}
A.~Pressley and G.~Segal, \emph{Loop groups}, Oxford University Press, Oxford,
  1988.

\bibitem{rsw:cs}
T.R.~Ramadas, I.M.~Singer, and J.~Weitsman, \emph{Some comments on Chern-Simons gauge theory}, Comm. Math. Phys. \textbf{126} (1989), no.~2, 409--420.
  
\bibitem{swa:ls}
R.C.~Swanson, \emph{Linear symplectic structures on {B}anach spaces}, Rocky Mount. J. Math. \textbf{10} (1980), no.~2, 305--317.

\bibitem{voz:lo}
R.~Vozzo, \emph{Loop groups, string classes and equivariant cohomology}, J.
  Aust. Math. Soc. \textbf{90} (2011), no.~1, 109--127.

\bibitem{wo:cl}
C.~Woodward, \emph{The classification of transversal multiplicity-free group
  actions}, Ann. Global Anal. Geom. \textbf{14} (1996), 3--42.

\bibitem{xu:mom}
P.~Xu, \emph{Momentum maps and {M}orita equivalence}, J. Differential Geom.
  \textbf{67} (2004), no.~2, 289--333.

\end{thebibliography}
\end{document}